\newif\ifAMS
\AMStrue\usepackage{amssymb}}{}
\newtheorem{thm}{Theorem}[section]
\newtheorem{cor}[thm]{Corollary}
\newtheorem{prop}{Proposition}[section]
\newtheorem{lem}[thm]{Lemma}
\newtheorem{rem}{Remark}[section]
\theoremstyle{definition}
\newtheorem{defn}{Definition}[section]
\numberwithin{equation}{section}
\begin{document}
\bigskip
\bigskip
\bigskip
\bigskip
\bigskip

\title{Phases of stable representations of quivers}

\bigskip
\bigskip
\bigskip
\bigskip

\author{Magnus Engenhorst\footnote{engenhor@math.uni-bonn.de}\\
Mathematical Institute, University of Bonn\\
Endenicher Allee 60, 53115 Bonn, Germany\\[5mm]}

\maketitle

\begin{abstract}
We consider stable representations of non-Dynkin quivers with respect to a central charge. On one condition the existence of a stable representation with self-extensions implies the existence of infinitely many stables without self-extensions. In this case the phases of the stable representations approach one or two limit points. In particular, the phases are not dense in two arcs.      
\end{abstract}

\bigskip

\section{Introduction}

We study stable representations of acyclic quivers\footnote{Throughout this paper the quivers are finite and connected.} with respect to a central charge. Our main tool is the \textit{categorical mutation method} developed in \cite{18,28} as mathematical counterpart to the mutation method in physics based on Seiberg duality \cite{50}. The BPS states of some supersymmetric quantum field theories can be identified with stable representations of quivers with potential \cite{1,2}. In \cite{50} the authors use the mutation method to calculate the stable representations for a given central charge based on physical arguments. Their work suggests in the presence of a stable higher spin state there are infinitely many stable representations approaching an accumulation ray, i.e. a central charge occupied by an infinite set of states. We consider the case of acyclic quivers, i.e. the superpotential $W$ is trivial ($W=0$). The higher spin states correspond to representations of the quiver with self-extensions (see section 3.1 of \cite{3}). Thus we are lead to study the stable representations of a quiver $Q$ in the presence of a stable representation $V$ with $Ext^{1}_{Q}(V,V)\neq 0$. Another motivation is the study of the generalized Kronecker quiver in \cite{210}. We need the following condition on the central charge: the stable modules without self-extensions have a unique phase, i.e. all other stable modules have a different phase.  We call such a central charge \textit{rigid}. Non-isomorphic stable modules with self-extensions can have the same phase and this will happen alle the time. We have the following result which is basically Proposition \ref{prop2} in the case of the category of finite-dimensional left $kQ$-modules $mod-kQ$ where $kQ$ is the path algebra of an acyclic quiver $Q$:

\begin{thm}
\label{11}
Let $Q$ be an acyclic quiver and let $k$ be an algebraically closed field. Given a rigid central charge $Z:K(mod-kQ)\rightarrow\mathbb{C}$ and a stable module $V\in mod-kQ$ with $Ext^{1}_{Q}(V,V)\neq 0$, then there are infinitely many stable objects and their phases approach a limit point from the left and a limit point from the right. In particular, the phases of the stable objects with respect to the central charge are not dense in two intervals in $(0,1]$ on the left/right of $\arg\ Z(V)$. The stable objects with phases in these intervals have no self-extensions. 
\end{thm}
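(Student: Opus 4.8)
The plan is to deduce Theorem~\ref{11} from Proposition~\ref{prop2}, applied to the finite-length hereditary abelian category $\mathcal{A}=mod-kQ$, and then to sharpen the conclusion using the root theory of $Q$. The first step is to supply the inputs of Proposition~\ref{prop2}. Since $k$ is algebraically closed and $V$ is stable, $V$ is a brick, so $End(V)=k$; together with $Ext^{1}_{Q}(V,V)\neq 0$ this forces $\langle\underline{\dim}\,V,\underline{\dim}\,V\rangle=\dim End(V)-\dim Ext^{1}_{Q}(V,V)\le 0$, so $\underline{\dim}\,V$ lies in the imaginary cone $\mathcal{C}^{\mathrm{im}}$ of $Q$ (in particular $Q$ cannot be Dynkin). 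With the rigidity hypothesis on $Z$ playing the role of the ``one condition'' of Proposition~\ref{prop2}, that proposition applies: it yields infinitely many stable objects, carrying no self-extensions, whose phases have at most two accumulation points, one approached from the left of $\phi(V):=\arg Z(V)$ and one from the right. The mechanism is the categorical mutation (Seiberg duality) method: the stable brick $V$ with a nonzero self-extension forces an infinite mutation sequence on each side of $\phi(V)$, realised for instance by iterated universal extensions with $V$ or, when $\langle\underline{\dim}\,V,\underline{\dim}\,V\rangle=0$, by the preprojective and preinjective modules of the affine subquiver on which $V$ restricts to the simple regular of a homogeneous tube.

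The second step locates these accumulation points relative to $\mathcal{C}^{\mathrm{im}}$. Because $Z$ is $\mathbb{R}$-linear and sends every nonzero module into $\mathbb{H}=\{\,re^{i\pi\phi}:r>0,\ \phi\in(0,1]\,\}$, the convex cone $Z(\mathcal{C}^{\mathrm{im}})$ occupies a closed sub-arc $[\phi_{L},\phi_{R}]\subseteq(0,1]$ containing $\phi(V)$, which degenerates to the single point $\phi(V)$ exactly when $\mathcal{C}^{\mathrm{im}}$ is a ray --- the affine case, giving the ``one limit point'' alternative. One then has to argue --- this is the delicate point, see below --- that the two sequences produced above have phases tending to $\phi_{L}$ from the left and to $\phi_{R}$ from the right (in the affine case both to $\phi(V)$), because their dimension vectors have directions converging to the boundary rays of $\mathcal{C}^{\mathrm{im}}$.

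The remaining assertions then follow. Fix intervals $I_{L}=(\alpha_{L},\phi_{L})$ and $I_{R}=(\phi_{R},\alpha_{R})$ inside $(0,1]$, with $\alpha_{L},\alpha_{R}$ so close to $\phi_{L},\phi_{R}$ that $I_{L},I_{R}$ are disjoint from $[\phi_{L},\phi_{R}]$; then (working with open intervals) $I_{L}$ lies strictly to the left and $I_{R}$ strictly to the right of $\phi(V)$. If $M$ is stable with $\phi(M)\in I_{L}\cup I_{R}$, then $\arg Z(\underline{\dim}\,M)=\phi(M)\notin[\phi_{L},\phi_{R}]$, so $\underline{\dim}\,M\notin\mathcal{C}^{\mathrm{im}}$ (which $Z$ maps into $[\phi_{L},\phi_{R}]$); hence $\underline{\dim}\,M$ is a real positive root and $M$, being indecomposable, is the unique indecomposable of that dimension vector, an exceptional module, whence $Ext^{1}_{Q}(M,M)=0$ --- this is the last sentence of the theorem. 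For ``not dense'', the phases of the infinitely many stables of step one accumulate at $\phi_{L}^{-}$ and $\phi_{R}^{+}$, so they lie in $I_{L}$ and $I_{R}$ for $\alpha_{L},\alpha_{R}$ suitably chosen; on the other hand the set of all phases of stable objects has at most two accumulation points, both in $[\phi_{L},\phi_{R}]$, hence none in $I_{L}\cup I_{R}$, so this phase set meets $I_{L}$ and $I_{R}$ in sets with no accumulation point there, which therefore cannot be dense. Finally, every module is an iterated extension of the simples $S_{1},\dots,S_{n}$, so all phases of stable objects lie in $[\min_{i}\phi(S_{i}),\max_{i}\phi(S_{i})]\subseteq(0,1]$, accounting for the $(0,1]$ in the statement.

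The crux is step one --- i.e.\ Proposition~\ref{prop2} itself --- and, within the present reduction, the matching of step two. The hard part of the construction is keeping $Z$-stability under control along the infinite mutation/extension process: at each stage one must verify that no submodule or quotient of the newly built module destabilises it, which rests on the strict inequalities $\phi_{L}<\phi(V)<\phi_{R}$ in the anisotropic case and on Euler-form bookkeeping inside the rank-two subcategory generated by the current module and $V$. The subtle point in the matching is that the two sequences of phases really converge to the \emph{extreme} phases $\phi_{L},\phi_{R}$ of $Z(\mathcal{C}^{\mathrm{im}})$ and not to some phase strictly interior to $[\phi_{L},\phi_{R}]$; otherwise an interval abutting the limit point would still meet the imaginary region $[\phi_{L},\phi_{R}]$, where stable objects with self-extensions occur, and the last clause would fail. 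I also expect the borderline case $\langle\underline{\dim}\,V,\underline{\dim}\,V\rangle=0$ with $Q$ wild --- $V$ sitting on the boundary of a fat imaginary cone, so one of the two limit points coincides with $\phi(V)$ --- to require separate handling.
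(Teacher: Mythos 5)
Your top-level move is the right one: the paper itself says Theorem~\ref{11} ``is basically Proposition~\ref{prop2}'' for $\mathcal{A}=mod-kQ$. But the deduction has a real gap at the point where you invoke that proposition. Proposition~\ref{prop2} does not only require a rigid central charge; it requires that $\mathcal{A}$ be an algebraic heart that can be tilted \emph{indefinitely} and that $\mathcal{A}$ \emph{and every heart obtained from it by a finite sequence of simple tilts} be rigid (simples without self-extensions). For $mod-kQ$ this is exactly the non-trivial input the paper supplies via Theorem~5.7 of King--Qiu (reference [70], used in the proof of Proposition~\ref{qeuclidean}): all such hearts are algebraic and rigid. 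You check only the central-charge condition, so as written the mutation machine is not licensed to run --- without rigidity of the tilted hearts the left-most simple could itself have self-extensions and the argument that $V$ ``never becomes left-most'' collapses.

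The second problem is that your route to the last two clauses runs through claims that are neither needed nor established. The theorem does not assert that the two limit points are the extreme phases $\phi_L,\phi_R$ of $Z$ applied to the imaginary cone; you flag this identification as the ``delicate point'' and leave it open, and Proposition~\ref{wild}(iv) shows that in the wild case the limit points need not be $\arg Z(y^{\pm})$, i.e.\ need not sit on the boundary of $Z(\mathcal{C}^{\mathrm{im}})$. Your non-density argument moreover asserts that the set of \emph{all} phases of stable objects has at most two accumulation points, which is false for wild quivers (section~5: for the generalized Kronecker quiver the phases of the stable regular modules are dense in an arc). The paper obtains both clauses directly and without any root-theoretic localization: the counter-clockwise (resp.\ clockwise) mutation method enumerates the stables to the left (resp.\ right) of $\arg Z(V)$ one at a time as left-most (resp.\ right-most) simples of rigid hearts, so they are automatically exceptional, and their phases form a strictly monotone discrete sequence converging to the limit point, whence non-density in an interval abutting it. Your real-root argument for ``no self-extensions'' is a legitimate alternative, but only once the intervals are known to avoid $Z(\mathcal{C}^{\mathrm{im}})$, which is precisely the step you have not supplied.
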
     

In the case of an Euclidean quiver there are infinitely many stable postprojective respectively preinjective stable indecomposables approaching the ray $Z(V)$ for the stable module $V$ with self-extensions from right respectively left (Prop. \ref{qeuclidean}). The situation for a wild quiver is more complicated. The infinitely many exceptional modules described in Theorem \ref{11} can contain infinitely many regular modules. The wild case is considered in Proposition \ref{wild}.\\ In section 2 we review Bridgeland stability conditions on triangulated categories which underlies the categorical mutation method. The latter is introduced in detail in section 3. In section 4 we enlarge upon Theorem \ref{11} for Euclidean quivers and in section 5 for wild quivers.

\section{Stability conditions on triangulated categories}

Let $\mathcal{D}$ be a triangulated category. We denote by $K(\mathcal{D})$ the Grothendieck group of $\mathcal{D}$.

\begin{defn}~\cite{5}
\label{bridgeland}
A \textit{stability condition} on a triangulated category $\mathcal{D}$ consists of a group homomorphism $Z:K(\mathcal{D})\rightarrow \mathbb{C}$ called the \textit{central charge} and of full additive subcategories $\mathcal{P}(\phi)\subset\mathcal{D}$ for each $\phi\in \mathbb{R}$, satisfying the following axioms:
\begin{enumerate}
\item if $0\neq E\in\mathcal{P}(\phi)$, then $Z(E)=m(E)exp(i\pi\phi)$ for some $m(E)\in \mathbb{R}_{>0}$;
\item $\forall \phi\in\mathbb{R}, \mathcal{P}(\phi+1)=\mathcal{P}(\phi)\left[1\right]$;
\item if $\phi_{1}>\phi_{2}$ and $A_{j}\in\mathcal{P}(\phi_{j})$, then $Hom_{\mathcal{D}}(A_{1},A_{2})=0;$
\item for $0\neq E\in\mathcal{D}$, there is a finite sequence of real numbers $\phi_{1}>\cdots>\phi_{n}$ and a collection of triangles 

$$0=\xymatrixcolsep{7mm}\xymatrix{E_{0}\ar[rr] && E_{1}\ar[ld]\ar[rr] && E_{2}\ar[ld]\\ \
			& A_{1}\ar @{-->}[lu]			& & A_{2}\ar @{-->}[lu]}\rightarrow\cdots\rightarrow\xymatrixcolsep{7mm}\xymatrix{ E_{n-1}\ar[rr] && E_{n}\ar[ld]\\			
			 & A_{n}\ar @{-->}[lu]}=E$$
       
with $A_{j}\in\mathcal{P}(\phi_{j})$ for all j. 
\end{enumerate}
\end{defn} 

We recall some results of \cite{5}. The subcategory $\mathcal{P}(\phi)$ is Abelian and its nonzero objects are said to be \textit{semistable} of phase $\phi$ for a stability condition $\sigma=(Z,\mathcal{P})$. We call its simple objects \textit{stable}. The objects $A_{i}$ in Definition $\ref{bridgeland}$ are called semistable factors of E with respect to $\sigma$. For any interval $I\subset\mathbb{R}$ we define $\mathcal{P}(I)$ to be the extension-closed subcategory of $\mathcal{D}$ generated by the subcategories $\mathcal{P}(\phi)$ for $\phi\in I$. The full subcategory $\mathcal{P}(>\phi)$ is a bounded t-structure in $\mathcal{D}$ with heart $\mathcal{P}((\phi,\phi+1])$. A stability condition is \textit{locally-finite} if there exists some $\epsilon>0$ such that for all $\phi\in\mathbb{R}$ each quasi-Abelian subcategory $\mathcal{P}((\phi-\epsilon,\phi+\epsilon))$ is of finite length. In this case the subcategory $\mathcal{P}(\phi)$ is of finite length. We denote by $Stab(\mathcal{D})$ the set of locally-finite stability conditions. It is a topological space.\\

\begin{defn} A \textit{central charge} (or \textit{stability function}) on an Abelian category $\mathcal{A}$ is a group homomorphism $Z:K(\mathcal{A})\rightarrow\mathbb{C}$ such that for any nonzero $E\in\mathcal{A}$, $Z(E)$ lies in the upper halfplane
\begin{align}
\label{halfplane}
\overline{\mathbb{H}}:=\left\{r\cdot exp(i\pi\phi)| 0<\phi\leq 1,r\in\mathbb{R}_{>0}\right\}\subset\mathbb{C}.
\end{align}
\end{defn}

Every object $E\in\mathcal{A}$ has a phase $0<\phi(E)\leq 1$ such that $$Z(E)=r\cdot exp(i\pi\phi(E))$$ with $r\in\mathbb{R}_{>0}$. We say a nonzero object $E\in\mathcal{A}$ is \textit{(semi)stable} with respect to the central charge $Z$ if every proper subobject $0\neq A\subset E$ satisfies $\phi(A)< \phi(E)$ ($\phi(A)\leq \phi(E)$).\\
We will frequently use the following fact: If $E_1$ is semistable of phase $\phi_1$ and $E_2$ is semistable of phase $\phi_2$ with respect
to a central charge on $\mathcal{A}$, then $Hom_{\mathcal{A}}(E_1,E_2)=0$, if $\phi_1>\phi_2$.\\

The central charge $Z$ has the Harder-Narasimhan (HN) property if every nonzero object $E\in \mathcal{A}$ has a finite filtration $$0=E_{0}\subset E_{1}\subset \ldots \subset E_{n-1}\subset E_{n}=E$$ where the semistable factors $F_{j}=E_{j}/E_{j-1}$ fulfill $$\phi(F_{1})>\phi(F_{2})>\ldots >\phi(F_{n}).$$

The next result and its proof are crucial for the following sections. 
\begin{prop}~\cite{5}
\label{prop5}
To give a stability condition on a triangulated category $\mathcal{D}$ is equivalent to giving a bounded t-structure on $\mathcal{D}$ and a central charge on its heart which has the Harder-Narasimhan property.
\end{prop}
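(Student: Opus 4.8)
The plan is to prove the two implications of Proposition~\ref{prop5} separately, in the spirit of Bridgeland.

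\emph{From a stability condition to a bounded t-structure with a central charge on its heart.} Given $\sigma=(Z,\mathcal P)$, I would take the heart to be $\mathcal A=\mathcal P((0,1])$; as recalled above, $\mathcal P(>0)$ is a bounded t-structure with this heart, so $K(\mathcal A)=K(\mathcal D)$ and $Z$ restricts to a group homomorphism on $K(\mathcal A)$. To see it is a central charge in the sense of \eqref{halfplane}, note that a nonzero $E\in\mathcal A$ has, by axiom~4 of Definition~\ref{bridgeland}, semistable factors $A_j\in\mathcal P(\phi_j)$, and one first checks the standard lemma that these factors again lie in $\mathcal P((0,1])$ (if the top factor had phase $>1$ or the bottom factor phase $\le 0$, the corresponding nonzero map to, resp. from, $E$ would violate axiom~3, using that $E$ is an iterated extension of objects of $\mathcal P((0,1])$). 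Hence $\phi_j\in(0,1]$ and $Z(A_j)=m(A_j)\exp(i\pi\phi_j)\in\overline{\mathbb H}$; since $\overline{\mathbb H}$ is closed under finite sums, $Z(E)=\sum_j Z(A_j)\in\overline{\mathbb H}$. The crucial point is then to identify $\mathcal P(\phi)$, for $\phi\in(0,1]$, with the $Z$-semistable objects of $\mathcal A$ of phase $\phi$: if $E\in\mathcal A$ is $Z$-semistable, its HN tower from axiom~4 has all factors in $\mathcal P((0,1])$, and if the tower had length $\ge 2$ the top factor would be a subobject of $E$ of strictly larger phase (the phase of a nontrivial sum of vectors with angles spanning less than $\pi$ is strictly between the extreme phases), contradicting semistability; conversely, if $E\in\mathcal P(\phi)$ and $0\neq A\subsetneq E$ in $\mathcal A$ had $\phi(A)>\phi$, the top HN factor of $A$ would give a nonzero map into $E$ from an object of strictly larger phase, contradicting axiom~3. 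Granting this, the axiom-4 tower of an object of $\mathcal A$ consists of short exact sequences in $\mathcal A$ (each stage is an extension of objects of $\mathcal A$, hence in $\mathcal A$, so each map is a monomorphism), so it is exactly a Harder--Narasimhan filtration for $Z$, and the HN property follows.

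\emph{From a bounded t-structure with central charge to a stability condition.} Given a bounded t-structure with heart $\mathcal A$ and a central charge $Z$ on $\mathcal A$ with the HN property, I would define $\mathcal P(\phi)$, for $\phi\in(0,1]$, to be $\{0\}$ together with the $Z$-semistable objects of $\mathcal A$ of phase $\phi$, and extend to all $\phi\in\mathbb R$ by forcing axiom~2, i.e. $\mathcal P(\phi+1)=\mathcal P(\phi)[1]$. Axioms~1 and~2 are then immediate. For axiom~3, I would first treat $\phi_1,\phi_2$ lying in a common interval $(m,m+1]$: after shifting, this is the classical fact recalled in the excerpt that a nonzero map between $Z$-semistable objects forces $\phi_1\le\phi_2$, proved by looking at the image, which is simultaneously a quotient of $A_1$ and a subobject of $A_2$; the remaining case $\phi_1\in(m,m+1]$, $\phi_2\in(n,n+1]$ with $m>n$ is handled by $\mathrm{Hom}(A_1'[m],A_2'[n])=\mathrm{Hom}(A_1',A_2'[n-m])=0$, which vanishes because $\mathcal A$ is the heart of a t-structure and $n-m<0$. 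For axiom~4, I would build the HN tower in two steps: first the canonical t-structure tower of $E$ whose factors are the shifted cohomology objects $H^i(E)[-i]\in\mathcal A[-i]$, then refine each $H^i(E)\in\mathcal A$ by its $Z$-HN filtration; since the phases coming from distinct cohomological degrees lie in disjoint intervals of length one and the $Z$-HN phases within a fixed degree strictly decrease, the concatenated sequence of phases is strictly decreasing, as required.

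The main obstacle — the one step I would allow myself most care over — is the identification, in the first implication, of $\mathcal P(\phi)$ with the $Z$-semistable objects of the heart of phase $\phi$, together with the accompanying lemma that the semistable factors of an object of $\mathcal P(I)$ stay in $\mathcal P(I)$; once this is in place, converting the axiom-4 tower into an honest subobject filtration of $\mathcal A$, and conversely assembling an axiom-4 tower from a $Z$-HN filtration, are bookkeeping. Finally, I would note that HN filtrations — both for $\sigma$ and for $Z$ on $\mathcal A$ — are unique, an immediate consequence of the Hom-vanishing between distinct phases, which makes the two constructions mutually inverse; the local-finiteness requirement built into the definition of $Stab(\mathcal D)$ can be carried along on both sides without affecting the argument.
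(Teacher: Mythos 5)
Your proof is correct and takes essentially the same route as the paper's (and Bridgeland's original) argument: in one direction passing to the heart $\mathcal{A}=\mathcal{P}((0,1])$ and identifying $\mathcal{P}(\phi)$ with the $Z$-semistable objects of $\mathcal{A}$ of phase $\phi$, and in the other defining $\mathcal{P}(\phi)$ from the semistable objects and extending by $\mathcal{P}(\phi+1)=\mathcal{P}(\phi)[1]$. You supply substantially more verification (the lemma on HN factors staying in $\mathcal{P}(I)$, the axiom checks, the two-step HN tower) than the paper's two-sentence sketch, but the underlying constructions are identical.
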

\begin{proof}Given a heart $\mathcal{A}$ of a bounded t-structure on $\mathcal{D}$ and a central charge with HN property we define the subcategories $\mathcal{P}(\phi)$ to be the semistable objects of $\mathcal{A}$ of phase $\phi\in(0,1]$ together with the zero-objects of $\mathcal{D}$ and continue by the rule $\mathcal{P}(\phi+1)=\mathcal{P}(\phi)[1]$.\\ Conversely, given a stability condition $\sigma=(Z,\mathcal{P})$ on a triangulated category $\mathcal{D}$ the full subcategory $\mathcal{A}=\mathcal{P}((0,1])$ is the heart of a bounded t-structure on $\mathcal{D}$. Identifying the Grothendieck groups $K(\mathcal{A})$ and $K(\mathcal{D})$ the central charge $Z:K(\mathcal{D})\rightarrow\mathbb{C}$ defines a central charge on $\mathcal{A}$. The semistable objects of the categories $\mathcal{P}(\phi)$ are the semistable objects of $\mathcal{A}$ with respect to this central charge.
\end{proof}   

\begin{defn}
A heart $\mathcal{A}\subset\mathcal{D}$ of a t-structure of a triangulated category $\mathcal{D}$ is called \textit{algebraic} if (i) all its objects have finite length, i.e. there are no infinite chains of inclusions or quotients for all objects of $\mathcal{A}$, and (ii) there are only finitely many simple objects. We call a heart $\mathcal{A}\subset\mathcal{D}$ \textit{rigid} if all its simple objects $S$ fulfill $Ext_{\mathcal{A}}^{1}(S,S)=0$.
\end{defn}

The central charge on an algebraic heart has automatical the HN property. Thus Proposition \ref{prop5} reduces the construction of stability conditions with algebraic heart $\mathcal{A}=\mathcal{P}((0,1])$ to the definition of a central charge on $\mathcal{A}\subset \mathcal{D}$ by choosing a complex number in $\overline{\mathbb{H}}$ for every simple object. Let $S_{1},\ldots,S_{n}$ be the simple objects of $\mathcal{A}$. Then the subset $Stab(\mathcal{A})$ of $Stab(\mathcal{D})$ consisting of stability conditions with heart $\mathcal{A}$ is isomorphic to $\mathbb{H}^{n}$.

\begin{defn}\cite{230}
A \textit{torsion pair} in an Abelian category $\mathcal{A}$ is a pair of full subcategories $(\mathcal{T}, \mathcal{F})$ satisfying
\begin{enumerate}
\item $Hom_{\mathcal{A}}(T,F)=0$ for all $T\in\mathcal{T}$ and $F\in\mathcal{F}$;
\item every object $E\in\mathcal{A}$ fits into a short exact sequence
\begin{align}
0\longrightarrow T\longrightarrow E\longrightarrow F\longrightarrow 0 \nonumber
\end{align}
for some pair of objects $T\in\mathcal{T}$ and $F\in\mathcal{F}$.
\end{enumerate} 
\end{defn}  

The objects of $\mathcal{T}$ are called \textit{torsion} and the objects of $\mathcal{F}$ are called \textit{torsion-free}.\\
 
Given a torsion pair in an heart of a bounded t-structure of a triangulated category $\mathcal{D}$ define new hearts of a bounded t-structure on $\mathcal{D}$ \cite{20}. This construction is called \textit{tilting}. The new hearts in this case after tilting are
\begin{align}
L_{S}(\mathcal{A})&=\left\{E\in \mathcal{D}| H^{i}(E)=0 \text{ for } i\notin \lbrace0,1\rbrace, H^{0}(E)\in\mathcal{F}, H^{1}(E)\in \left\langle S\right\rangle\right\}, \nonumber \\
R_{S}(\mathcal{A})&=\left\{E\in \mathcal{D}| H^{i}(E)=0 \text{ for } i\notin \lbrace-1,0\rbrace, H^{-1}(E)\in\left\langle S\right\rangle, H^{0}(E)\in\mathcal{T} \right\}. \nonumber
\end{align}
Here $H^{i}(E)$ are the cohomology objects of $E$ with espect to the initial bounded t-structure. For the details see \cite{20}. $L_{S}(\mathcal{A})$ (respectively $R_{S}(\mathcal{A})$) is called \textit{the left} (respectively \textit{the right}) \textit{tilt of $\mathcal{A}$ at the simple $S$}. $S[-1]$ is a simple object in $L_{S}(\mathcal{A})$ and if this heart is again of finite length we have $R_{S[-1]}L_{S}(\mathcal{A})=\mathcal{A}$. Similarly, if $R_{S}(\mathcal{A})$ has finite length, we have $L_{S[1]}R_{S}(\mathcal{A})=\mathcal{A}$.\\

We are interested in the case of a simple object of $\mathcal{A}$ leaving the upper halfplane. We have the following crucial result:

\begin{prop}
\label{lemma} 
(\cite{10}, Lemma 5.5) Let $\mathcal{A}\subset \mathcal{D}$ be an algebraic heart of a bounded t-structure on $\mathcal{D}$. Let $\sigma\in Stab(\mathcal{A})$ be  a stability conditions such that the simple object $S$ has phase $1$ and all other simples in $\mathcal{A}$ have phases in $(0,1)$. Then there is a neighbourhood of $\sigma$ lying in $Stab(\mathcal{A})\cup Stab(L_{S}(\mathcal{A}))$.
\end{prop}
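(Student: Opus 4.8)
The plan is to combine the deformation theorem for stability conditions with Proposition~\ref{prop5}. Write $\sigma=(Z,\mathcal{P})$ and let $S=S_{1},S_{2},\dots,S_{n}$ be the simple objects of $\mathcal{A}$, so that $\arg Z(S)=\pi$ while $Z(S_{2}),\dots,Z(S_{n})$ lie in the open upper halfplane. Since $\mathcal{A}$ is algebraic we have $K(\mathcal{D})=K(\mathcal{A})\cong\mathbb{Z}^{n}$, so by Bridgeland's deformation theorem~\cite{5} the forgetful map $Stab(\mathcal{D})\to Hom(K(\mathcal{D}),\mathbb{C})$ restricts to a homeomorphism from a neighbourhood $U_{0}$ of $\sigma$ onto a neighbourhood $V_{0}$ of $Z$; in particular every stability condition near $\sigma$ is of the form $\tau_{Z'}$ for a unique $Z'$ near $Z$, and it suffices to show that for $Z'$ near $Z$ one has $\tau_{Z'}\in Stab(\mathcal{A})\cup Stab(L_{S}(\mathcal{A}))$. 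For such $Z'$ the points $Z'(S_{2}),\dots,Z'(S_{n})$ still lie in the open upper halfplane, while $Z'(S)$ lies near the negative real axis, so exactly one of the following holds: either $\arg Z'(S)\in(0,\pi]$, so that $Z'(S)\in\overline{\mathbb{H}}$, or $\arg Z'(S)\in(\pi,\pi+\epsilon)$ for arbitrarily small $\epsilon>0$, so that $Z'(S)\notin\overline{\mathbb{H}}$. I would treat these two cases and show that they force $\tau_{Z'}\in Stab(\mathcal{A})$ and $\tau_{Z'}\in Stab(L_{S}(\mathcal{A}))$ respectively.

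In the first case, $Z'(S)\in\overline{\mathbb{H}}$. Because $\mathcal{A}$ has finite length, $K(\mathcal{A})$ is free abelian on $[S_{1}],\dots,[S_{n}]$ and the class of any nonzero object of $\mathcal{A}$ is a nonzero, nonnegative integer combination of these. Since $\overline{\mathbb{H}}$ is closed under addition and under positive scaling and now contains every $Z'(S_{i})$, the homomorphism $Z'$ restricts to a central charge on $\mathcal{A}$; as $\mathcal{A}$ is algebraic this central charge has the Harder--Narasimhan property, so Proposition~\ref{prop5} produces a stability condition $\rho$ on $\mathcal{D}$ with heart $\mathcal{A}$ and central charge $Z'$. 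The phases of the semistable objects of $\rho$ depend continuously on $Z'(S_{1}),\dots,Z'(S_{n})$, hence $\rho\to\sigma$ as $Z'\to Z$; after shrinking the neighbourhood, $\rho$ lies in $U_{0}$, and injectivity of the central charge map on $U_{0}$ then forces $\tau_{Z'}=\rho\in Stab(\mathcal{A})$.

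In the second case, $\arg Z'(S)\in(\pi,\pi+\epsilon)$, and I would replace $\mathcal{A}$ by $\mathcal{B}:=L_{S}(\mathcal{A})$. By the tilting construction~\cite{20} this is again the heart of a bounded t-structure on $\mathcal{D}$; its simple objects are $S[-1]$ together with, for each $i\geq 2$, a ``mutated simple'' $S_{i}'$ sitting in a universal extension $0\to S_{i}\to S_{i}'\to S^{\oplus c_{i}}\to 0$ in $\mathcal{A}$, where $c_{i}=\dim Ext^{1}_{\mathcal{A}}(S,S_{i})\geq 0$, so that $[S_{i}']=[S_{i}]+c_{i}[S]$. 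Now $Z'([S[-1]])=-Z'(S)$ has argument in $(0,\epsilon)\subset(0,\pi]$, and $Z'([S_{i}'])=Z'(S_{i})+c_{i}Z'(S)$ has strictly positive imaginary part once $\epsilon$ is small, since the imaginary part of $Z'(S_{i})$ is bounded below by a positive constant, that of $c_{i}Z'(S)$ is of order $\epsilon$, and there are only finitely many $c_{i}$. Hence $Z'$ sends all simples of $\mathcal{B}$ into $\overline{\mathbb{H}}$, so it restricts to a central charge on $\mathcal{B}$; assuming $\mathcal{B}$ is again of finite length (see below), Proposition~\ref{prop5} gives a stability condition $\rho'$ with heart $\mathcal{B}$ and central charge $Z'$. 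Along this family the phases of $S[-1]$ and of $S=S[-1][1]$ tend to $0$ and to $1$, which are exactly their phases in $\sigma$, so $\rho'\to\sigma$; shrinking the neighbourhood once more gives $\tau_{Z'}=\rho'\in Stab(L_{S}(\mathcal{A}))$.

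Intersecting the finitely many shrinkings produces a single neighbourhood of $\sigma$ contained in $Stab(\mathcal{A})\cup Stab(L_{S}(\mathcal{A}))$, which is the claim. I expect the real work to lie in the second case: identifying the simple objects of $L_{S}(\mathcal{A})$ together with their classes in $K(\mathcal{D})$, and verifying that $L_{S}(\mathcal{A})$ is again of finite length so that Proposition~\ref{prop5} applies --- this is the structural input one takes from the theory of tilting at a simple object \cite{20}. Once that is in place, the perturbation estimates and the choice of a uniform neighbourhood are routine, the finiteness of the set of simples doing the work.
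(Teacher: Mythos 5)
The paper itself offers no proof here: the statement is quoted from Bridgeland (\cite{10}, Lemma 5.5), whose argument is the direct one. The finitely many simples $S=S_1,\dots,S_n$ are $\sigma$-stable; stability is an open condition, so any $\tau=(W,\mathcal{Q})$ close to $\sigma$ keeps them stable with nearby phases. If $\phi_\tau(S)\le 1$ then every simple of $\mathcal{A}$ lies in $\mathcal{Q}((0,1])$, hence so does its extension closure $\mathcal{A}$, and two nested hearts of bounded t-structures coincide, so $\tau\in Stab(\mathcal{A})$; if $\phi_\tau(S)>1$ one shows in the same way that $\langle\mathcal{F},\langle S\rangle[-1]\rangle=L_S(\mathcal{A})\subset\mathcal{Q}((0,1])$. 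Your route --- deform the central charge, manufacture a stability condition with heart $\mathcal{A}$ or $L_S(\mathcal{A})$ and charge $Z'$ via Proposition~\ref{prop5}, and match it against the one produced by the deformation theorem --- is genuinely different, and in the acyclic-quiver setting of this paper it can be pushed through.

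However, as written it has a real gap: Case 2 needs $L_S(\mathcal{A})$ to be of finite length, both to get the Harder--Narasimhan property for free and to reduce the condition ``$Z'$ maps every nonzero object of $L_S(\mathcal{A})$ into $\overline{\mathbb{H}}$'' to a check on the finitely many new simples. This is \emph{not} simply ``structural input from the theory of tilting'': a simple tilt of an algebraic heart need not be algebraic, which is precisely why this paper imposes the separate hypothesis that one can tilt $\mathcal{A}$ indefinitely, and why for acyclic quivers it has to invoke Theorem 5.7 of \cite{70} to know that iterated simple tilts of $mod-kQ$ remain algebraic. The proposition makes no such assumption and is true without it, so your argument proves a conditional version of the statement. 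A secondary gap: identifying your $\rho'$ (heart $L_S(\mathcal{A})$, charge $Z'$) with the stability condition near $\sigma$ supplied by the deformation theorem requires $d(\sigma,\rho')$ to be small in Bridgeland's metric, i.e.\ uniform control of $\phi^{\pm}_{\rho'}(E)-\phi^{\pm}_{\sigma}(E)$ over \emph{all} nonzero $E\in\mathcal{D}$; convergence of the phases of $S$ and $S[-1]$ alone does not give this, since $\sigma$ and $\rho'$ have different hearts and a priori unrelated HN filtrations. Supplying that estimate essentially forces you back into the direct comparison of slicings that constitutes Bridgeland's proof.
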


\section{Mutation method}

In this section we review the \textit{categorical mutation method}. Most results of this work are based on Proposition \ref{prop2} proved in this section.

\begin{defn}
Let $\mathcal{A}\subset\mathcal{D}$ be an algebraic heart of a bounded t-structure of $\mathcal{D}$. We say we can \textit{tilt $\mathcal{A}$ indefinitely} if the left-tilt $L_{S}(\mathcal{A})$ at any simple object of $\mathcal{A}$ is again of finite length. The tilt of $L_{S}(\mathcal{A})$ at any simple object of $L_{S}(\mathcal{A})$ is again of finite length and so on.
\end{defn}

Recall the notion of a central charge from the last section.

\begin{defn} Let $\mathcal{A}$ be an Abelian category. We call a central charge $Z:K(\mathcal{A})\rightarrow\mathbb{C}$ \textit{discrete} if different stable objects of $\mathcal{A}$ have different phases. 
\end{defn}

Let $\mathcal{A}\subset\mathcal{D}$ be the heart of a bounded t-structure of $\mathcal{D}$ of finite length with finitely many simple objects. Let us consider the $n$ simple objects $S_{1},\ldots, S_{n}$ of $\mathcal{A}$. We assume there is a simple object $S_{i}$ that is \textit{left-most}, i.e. whose phase in $(0,1]$ is the bigger than the phases of the other simple objects with respect to the central charge $Z:K(\mathcal{A})\rightarrow\mathbb{C}$. First we assume that the central charge of $S_{1},\ldots, S_{n}$ lie in the upper half-plane above the real axis. Then we rotate the central charge $Z:K(\mathcal{A})\rightarrow\mathbb{C}$ by rotating the complex numbers $Z(S_{1}),\ldots, Z(S_{n})$ a bit counter-clockwise. This corresponds to deforming stability conditions in $Stab(\mathcal{A})$ within $Stab(\mathcal{A})$ until we reach a stability condition $\sigma$ on the boundary of $Stab(\mathcal{A})$ corresponding to a central charge where the left-most simple object $S_i$ of $\mathcal{A}$ lies on the negative real axis and all other simple objects still lie in the upper half-plane. Now we are in the situation of Proposition \ref{lemma} and thus there is a neighborhood of $\sigma$ that lies in $Stab(\mathcal{A})\cup Stab(L_{S_{i}}(\mathcal{A}))$. If we rotate a bit further the corresponding stability conditions all lie in $Stab(L_{S_{i}}(\mathcal{A}))$. If a simple object of the tilted heart is left-most we can repeat the same process for the tilted heart $L_{S_{i}}(\mathcal{A})$ and proceed further (if possible).\\

Let $\mathcal{A}\subset\mathcal{D}$ be an algebraic heart of a bounded t-structure of $\mathcal{D}$. We assume we can tilt $\mathcal{A}$ indefinitely. We summarize the steps of the \textit{(categorical) mutation method}:

\begin{enumerate}
\item Start with a stability condition in $Stab(\mathcal{A})$ and deform it within $Stab(\mathcal{A})$ by rotating the central charge $Z:K(\mathcal{D})\rightarrow\mathbb{C}$ counter-clockwise. 
\item If the left-most simple object $S$ leaves the upper half-plane tilt at this left-most simple $S$.
\item Deform within $Stab(L_{S}(\mathcal{A}))$ by rotating the central charge further till the left-most object of $L_{S}(\mathcal{A})$ leaves the upper half-plane and tilt $L_{S}(\mathcal{A})$ at this simple object.
\item Repeat this procedure (if possible).  
\end{enumerate}

\begin{rem}The mutation method is similary defined for rotating the central charge clock-wise with right-tilts at the right-most simple objects.
\end{rem}

Given a discrete central charge with finitely many stable objects the following theorem is the main result. We include a proof since we will use the same arguments to study stable objects for more general central charges in the next section.

\begin{thm}\cite{18}
\label{prop}
Let $\mathcal{A}\subset\mathcal{D}$ be an algebraic heart of a bounded t-structure of $\mathcal{D}$ with simple objects $S_{1},\ldots, S_{n}$. We assume we can tilt $\mathcal{A}$ indefinitely and we have given a discrete central charge $Z:K(\mathcal{A})\rightarrow\mathbb{C}$ with finitely many stable objects. Then the left-most simple objects of hearts appearing in the mutation method are the stable objects of $\mathcal{A}$. In the order of decreasing phase they give a sequence of simple tilts from $\mathcal{A}$ to $\mathcal{A}[-1]$.
\end{thm}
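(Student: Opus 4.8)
The plan is to prove Theorem \ref{prop} by carefully tracking the mutation method step by step and identifying what happens at each tilt. First I would set up notation: as we rotate the central charge counter-clockwise starting from a stability condition $\sigma_0 \in Stab(\mathcal{A})$, by Proposition \ref{lemma} each time the left-most simple $S$ reaches the negative real axis we cross into $Stab(L_S(\mathcal{A}))$, producing a sequence of hearts $\mathcal{A} = \mathcal{A}_0, \mathcal{A}_1 = L_{S^{(0)}}(\mathcal{A}_0), \mathcal{A}_2, \ldots$ together with a decreasing sequence of rotation angles at which the tilts occur. The key observation is that a simple object of $\mathcal{A}$ which is \emph{stable} with respect to $Z$ remains a semistable (indeed stable) object of $\mathcal{D}$ under rotation, with a well-defined phase that increases continuously; conversely the left-most simple of any heart $\mathcal{A}_j$ is, when it reaches phase $1$, a stable object of $\mathcal{D}$ whose $\sigma_0$-phase lies in $(0,1]$, hence (after shifting back by the appropriate power of $[1]$) it is a stable object of the original heart $\mathcal{A}$. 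So the left-most simples appearing are all stable objects of $\mathcal{A}$.

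Next I would argue the converse inclusion and the ordering. Here discreteness of $Z$ is essential: since stable objects of $\mathcal{A}$ have pairwise distinct phases, we may list them as $T_1, \ldots, T_m$ in order of strictly decreasing phase, and by hypothesis $m$ is finite. The claim is that the mutation method encounters exactly $T_1$ first, then $T_2$, and so on. I would prove by induction on $j$ that after the $j$-th tilt the heart $\mathcal{A}_j$ has the property that its simple objects are precisely those stable objects of $\mathcal{A}$ not yet used, shifted appropriately, together with possibly some objects of $\mathcal{A}[-1]$; and that the next left-most simple to leave the upper half-plane is $T_{j+1}$. The mechanism is that tilting at the left-most simple $S = T_{j}$ removes $T_j$ from the heart and replaces it by $T_j[-1]$, while the remaining simples are reorganized but their set of \emph{stable} representatives (as objects of $\mathcal{D}$) is unchanged; since $Z(T_j[-1]) = -Z(T_j)$ sits in the lower half-plane just after crossing, $T_j[-1]$ does not interfere, and continued rotation brings $T_{j+1}$ to the boundary next because no stable object has phase strictly between that of $T_{j+1}$ and $T_j$.

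The main obstacle I expect is precisely the bookkeeping in the inductive step: showing that the tilted heart $L_{T_j}(\mathcal{A}_j)$ has \emph{only} $T_{j+1}$ (and not some new, previously invisible object) as its next left-most simple to hit phase $1$, and that no stable object is ever skipped or visited twice. This requires knowing that the stable objects of $\mathcal{D}$ lying in the relevant angular window $(\arg Z(T_{j+1}), \arg Z(T_j))$ are exactly controlled — one uses that $\mathcal{P}((\phi, \phi+1])$ is always an algebraic heart (finite length, finitely many simples, by the ``tilt indefinitely'' hypothesis and Proposition \ref{lemma}), that its simples are among the $\sigma_0$-stable objects, and that a finite-length heart has a unique left-most simple whenever the central charges are in the open upper half-plane except possibly one on the negative real axis. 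The finiteness of the list $T_1, \ldots, T_m$ then guarantees the process terminates, and a degree/rotation count shows it terminates exactly at $\mathcal{A}[-1]$: after using up all $m$ stable objects the total rotation has carried every simple of $\mathcal{A}$ to its shift by $[-1]$, so the final heart is $\mathcal{A}[-1]$ and the sequence of tilts is a sequence of simple tilts from $\mathcal{A}$ to $\mathcal{A}[-1]$ as claimed.
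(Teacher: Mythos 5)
Your overall strategy---rotate the central charge, tilt at successive left-most simples, show these are exactly the stable objects of $\mathcal{A}$ in decreasing order of phase, and conclude that the final heart is $\mathcal{A}[-1]$---is the same as the paper's. But the step you explicitly flag as ``the main obstacle'' (controlling the simples of the tilted hearts so that nothing new appears and no stable object is skipped or revisited) is precisely where the paper's proof does its real work, and it closes that gap by a different mechanism than the one you gesture at. Instead of analyzing the hearts $\mathcal{P}((\phi,\phi+1])$ of the deformed stability conditions directly, the paper proves by induction, using Lemma 3.2 of \cite{190}, that every heart $\mathcal{A}'$ occurring in the mutation method is the left tilt of the \emph{original} heart $\mathcal{A}$ at a single torsion pair $(\mathcal{T},\mathcal{F})$, so that every simple of $\mathcal{A}'$ lies either in $\mathcal{F}\subset\mathcal{A}$ or in $\mathcal{T}[-1]\subset\mathcal{A}[-1]$. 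Since under counter-clockwise rotation all objects of $\mathcal{A}[-1]$ have strictly smaller phase than those of $\mathcal{A}$, the left-most simple is always in $\mathcal{A}$ itself; in particular the parenthetical ``after shifting back by the appropriate power of $[1]$'' in your first paragraph is not only unnecessary but would weaken the statement, since the theorem asserts the left-most simples literally \emph{are} stable objects of $\mathcal{A}$.

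With that structural fact in hand, the persistence argument you want becomes elementary and does not require studying the intermediate hearts as stability-condition hearts: if $S$ is the current left-most simple and $X$ is a stable object of $\mathcal{A}$ of smaller phase, then $Hom(S,X)=0$ by semistability, so $X$ lies in the torsion-free part and survives into the next tilt; discreteness then forces that when the phase of $X$ coincides with that of a later left-most simple, the two objects are equal, so $X$ eventually appears as a left-most simple and no stable object is visited twice. Finiteness of the set of stable objects gives termination, and the paper's termination argument is cleaner than your ``degree/rotation count'': once every simple of the current heart lies in $\mathcal{A}[-1]$, the heart is contained in $\mathcal{A}[-1]$ (a finite-length heart is generated by its simples under extensions), and an inclusion of hearts of bounded t-structures is an equality. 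I would recommend replacing your appeal to the algebraicity of $\mathcal{P}((\phi,\phi+1])$ by this torsion-pair bookkeeping; as written, your inductive step is an announced intention rather than an argument.
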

\begin{proof}
We repeat the mutation algorithm described above until we accomplish a rotation by $\pi$. Under the assumptions in the theorem this is possible. For the details see \cite{18}. In the mutation method we always tilt at objects in $\mathcal{A}$ as can be seen as follows: The first tilt is at a simple object in $\mathcal{A}$. Then the simple objects in the first tilted heart are in $\mathcal{A}$ or in $\mathcal{A}[-1]$. Since we tilt at the left-most simple of a heart we tilt next at an object in $\mathcal{A}$. This is because all objects in $\mathcal{A}[-1]$ will have a smaller phase than objects in $\mathcal{A}$ since we rotate counter-clockwise. We prove that the simple objects of the hearts appearing in the mutation method are in $\mathcal{A}$ or in $\mathcal{A}[-1]$ by induction: The simple objects of a tilted heart are in $\mathcal{A}$ or in $\mathcal{A}[-1]$: Let us assume that a heart $\mathcal{A}'$ appearing in the mutation method is the left-tilt of $\mathcal{A}$ with respect to some torsion pair $(\mathcal{T},\mathcal{F})$ in $\mathcal{A}$. Then the simple objects of $\mathcal{A}'$ lie in $\mathcal{F}\subset\mathcal{A}$ or in $\mathcal{T}[-1]\subset\mathcal{A}[-1]$. We tilt next at an object $S'$ in $\mathcal{A}$. Thus $S'\in\mathcal{F}$ and $\left\langle S'\right\rangle\subset \mathcal{F}$. By Lemma 3.2 in \cite{190} the simple objects of the heart obtained by tilting $\mathcal{A}'$ at $S'$ is the left-tilt of some torsion pair of $\mathcal{A}$.\\ 

As long as a simple object of a heart appearing in the mutation method lies in $\mathcal{A}$ and thus its central charge lies in $\overline{\mathbb{H}}$ we have not accomplished a rotation by $\pi$. The final heart $\mathcal{A}'$ obtained in the mutation method contains only simple objects in $\mathcal{A}[-1]$. We have therefore $\mathcal{A}'\subset\mathcal{A}[-1]$ and this implies $\mathcal{A}'=\mathcal{A}[-1]$. If all simple objects are in $\mathcal{A}[-1]$ we are in the final heart.\\

All left-most simple objects of a heart appearing in the mutation method are stable objects in $\mathcal{A}$. The phases of all other stable objects in $\mathcal{A}$ are smaller than the phase of the first left-most simple object $S$ since we chose a discrete central charge. By the definition of the left-tilt at a simple object all stable objects except the left-most simple remain in the first tilt of $\mathcal{A}$ since there are no homomorphisms between $S$ and the other stable objects. In the first tilted heart the phases of the stable objects of $\mathcal{A}$ are equal or smaller than the new left-most simple object. If the phase of a stable object of $\mathcal{A}$ is equal to this left-most simple they are the same since we chose a discrete central charge. Otherwise the stable object remains in the next tilted heart and so on. Since we rotate the central charge further and further every stable object of $\mathcal{A}$ has to appear as a left-most simple of a heart. Therefore we tilt in the mutation method at all stable objects of $\mathcal{A}$. For every central charge, we tilt at all initial simple objects $S_{1},\ldots, S_{n}$ since these are stable for any central charge.  
\end{proof}

Let $k$ be an algebraically closed field. Let $Q=(Q_{0},Q_{1})$ be a finite connected quiver with set of vertices $Q_{0}$ and set of arrows $Q_{1}$. We denote by $kQ$ its path algebra, i.e. the algebra with basis given by all paths in $Q$ and product given by composition of paths. Let $mod-kQ$ be the category of finite-dimensional left modules over $kQ$ and $D^{b}(mod-kQ)$ be its bounded derived category.

\begin{prop}
Let $Q$ be a Dynkin quiver. Let $\mathcal{A}$ be a heart of a bounded t-structure in $D^{b}(mod-kQ)$. The stable objects with respect to a discrete central charge on $\mathcal{A}$ in the order of decreasing phase define a sequence of simple tilts from $\mathcal{A}$ to $\mathcal{A}[-1]$.
\end{prop}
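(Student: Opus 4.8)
The plan is to reduce the statement to Theorem~\ref{prop}. That theorem already yields exactly this conclusion --- the left-most simple objects of the hearts appearing in the mutation method are the stable objects of $\mathcal{A}$, and in order of decreasing phase they are the simples at which one performs a chain of simple tilts from $\mathcal{A}$ to $\mathcal{A}[-1]$ --- provided that: (a) $\mathcal{A}$ is an algebraic heart; (b) one can tilt $\mathcal{A}$ indefinitely; (c) the discrete central charge has only finitely many stable objects. So the work is to verify (a), (b), (c) in the Dynkin case.

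For (a) and (b) the input is Gabriel's theorem: since $Q$ is Dynkin, $mod\text{-}kQ$ is representation-finite, with only finitely many indecomposable modules (one per positive root of the underlying root system), and since $kQ$ is finite-dimensional all $\mathrm{Hom}$-spaces in $D^{b}(mod\text{-}kQ)$ are finite-dimensional; hence $D^{b}(mod\text{-}kQ)$ has, up to shift, only finitely many indecomposable objects, the shifts $M[n]$ of the indecomposable modules. I would then use the structure theory of the derived category of a representation-finite hereditary algebra to conclude that every heart of a bounded t-structure on $D^{b}(mod\text{-}kQ)$ is algebraic --- of finite length and with finitely many simple objects --- and that only finitely many hearts are reachable from $\mathcal{A}$ by iterated tilting. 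In particular $\mathcal{A}$ is algebraic, so its central charge automatically has the Harder--Narasimhan property (as noted after the definition of an algebraic heart), and every iterated left tilt $L_{S}(\mathcal{A}), L_{S'}L_{S}(\mathcal{A}),\dots$ is again a heart of a bounded t-structure on $D^{b}(mod\text{-}kQ)$, hence again of finite length; this is precisely the assertion that $\mathcal{A}$ can be tilted indefinitely.

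For (c): a stable object for a central charge on an Abelian category is a simple object of some $\mathcal{P}(\phi)$, hence indecomposable in $\mathcal{A}$, hence one of the finitely many indecomposables of $D^{b}(mod\text{-}kQ)$ that lie in $\mathcal{A}$; so there are only finitely many stable objects, and the discreteness hypothesis supplies the remaining input of Theorem~\ref{prop}. Having verified (a)--(c), I would simply invoke Theorem~\ref{prop}: the mutation method applied to $(\mathcal{A},Z)$ --- rotating the central charge counter-clockwise and left-tilting at successive left-most simples --- realizes each stable object of $\mathcal{A}$ as a left-most simple, and listing these in order of decreasing phase gives the required sequence of simple tilts from $\mathcal{A}$ to $\mathcal{A}[-1]$.

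The only non-formal step is the verification in (a)--(b) that every bounded heart, and hence every heart obtained by iterated tilting, is of finite length with finitely many simples; everything else is bookkeeping on top of Theorem~\ref{prop}. This is also exactly where the Dynkin hypothesis is essential: for non-Dynkin $Q$ one meets infinite tilting chains and non-algebraic hearts, the phenomenon the rest of the paper addresses. I expect this finiteness point to be the main obstacle, and I would deal with it via the classification of bounded t-structures on $D^{b}(mod\text{-}kQ)$ for $Q$ representation-finite (equivalently, of silting objects of $K^{b}(\mathrm{proj}\,kQ)$), citing the relevant literature rather than reproving it.
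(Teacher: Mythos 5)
Your proposal is correct and follows essentially the same route as the paper: both reduce to Theorem~\ref{prop} by checking that every heart of a bounded t-structure on $D^{b}(mod\text{-}kQ)$ is algebraic (hence $\mathcal{A}$ can be tilted indefinitely) and that a discrete central charge on $\mathcal{A}$ has only finitely many stable objects. The only cosmetic difference is in the last point, where the paper cites the finiteness of the set of phases of stables from the literature, while you argue directly that each of the finitely many indecomposables of $D^{b}(mod\text{-}kQ)$ (up to shift) contributes at most one object to $\mathcal{A}$ --- a slightly more self-contained justification of the same hypothesis.
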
 
\begin{proof}
By Proposition 6.1 in \cite{200} any heart of a bounded t-structure in $D^{b}(mod-kQ)$ is algebraic. By Lemma 3.13 in \cite{210} the set of phases of stable objects is finite and thus there are only finitely many stable objects in $mod-kQ$. Now the Proposition follows from Theorem \ref{prop}. 
\end{proof}

Let $\widehat{kQ}$ be the completion of $kQ$ at the ideal generated by the arrows of $Q$. We consider the quotient of $\widehat{kQ}$ by the subspace $[\widehat{kQ},\widehat{kQ}]$ of all commutators. It has a basis given by the cyclic paths of $Q$ (up to cyclic permutation). For each arrow $a\in Q_{1}$ the cyclic derivative is the linear map from the quotient to $\widehat{kQ}$ which takes an equivalence class of a path $p$ to the sum $$\sum_{p=uav}vu$$ taken over all decompositions $p=uav$. An element $$W\in HH_{0}(\widehat{kQ})=\widehat{\frac{\widehat{kQ}}{[\widehat{kQ},\widehat{kQ}]}}$$ is called a \textit{(super)potential} if it does not involve cycles of length $\leq 2$.

\begin{defn}\cite{80}
\label{jacobialgebra}
Let $(Q,W)$ be a quiver $Q$ with potential $W$. The \textit{Jacobi algebra} $\mathfrak{J}(Q,W)$ is the quotient of $\widehat{kQ}$ by the twosided ideal generated by the cyclic derivatives $\partial_{a}W$: $$\mathfrak{J}(Q,W):=\widehat{kQ}/(\partial_{a}W, a\in Q_{1}).$$
\end{defn}

We denote the category of finite-dimensional right modules over $\mathfrak{J}(Q,W)$ by $mod-\mathfrak{J}(Q,W)$. Given a quiver with potential $(Q,W)$ we can define a differential graded algebra $\Gamma=\Gamma((Q,W))$, the Ginzburg algebra of $(Q,W)$. \cite{90} Let $D_{fd}(\Gamma)$ be the finite-dimensional derived category of the Ginzburg algebra $\Gamma$ \cite{100}. 

\begin{thm}
\label{theorem}
Let $(Q,W)$ be a 2-acyclic quiver $Q$ with non-degenerate\footnote{Non-degenerate in the sense of \cite{80}.} potential $W$ such that we have a discrete central charge on $\mathcal{A}=mod-\mathfrak{J}(Q,W)$ with finitely many stable objects. Then the sequence of stable objects of $\mathcal{A}$ in the order of decreasing phase defines a sequence of simple tilts from $\mathcal{A}$ to $\mathcal{A}[-1]$ in $D_{fd}(\Gamma)$. Moreover, $\mathfrak{J}(Q,W)$ is finite-dimensional.  
\end{thm}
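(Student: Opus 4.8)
The plan is to reduce the statement to Theorem~\ref{prop}: I must only check that $\mathcal{A}=mod\text{-}\mathfrak{J}(Q,W)$ is an algebraic heart of a bounded t-structure on $D_{fd}(\Gamma)$ which can be tilted indefinitely, apply the mutation method, and then extract the finite-dimensionality from the fact that the resulting tilting sequence is finite. First I would recall that $H^{0}(\Gamma)=\mathfrak{J}(Q,W)$ and that the canonical t-structure on $D_{fd}(\Gamma)$ is bounded with heart $mod\text{-}H^{0}(\Gamma)=\mathcal{A}$; this uses only the construction of $\Gamma$ and of $D_{fd}(\Gamma)$ recalled above (\cite{90},\cite{100}) together with the fact that objects of $D_{fd}(\Gamma)$ have finite-dimensional total homology. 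This heart is algebraic: its objects are finite-dimensional over $k$, hence of finite length, and its simple objects $S_{1},\dots,S_{n}$ are in bijection with the vertices of $Q$, so there are only finitely many of them; note that none of this requires $\mathfrak{J}(Q,W)$ to be finite-dimensional.

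Next I would argue that $\mathcal{A}$ can be tilted indefinitely. By the theorem of Keller and Yang on derived equivalences arising from mutations of quivers with potential, for each vertex $k$ the mutated quiver with potential $\mu_{k}(Q,W)$ is again $2$-acyclic with non-degenerate potential (\cite{80}), and there is a triangle equivalence $D_{fd}(\Gamma(\mu_{k}(Q,W)))\simeq D_{fd}(\Gamma)$ under which the canonical heart $mod\text{-}\mathfrak{J}(\mu_{k}(Q,W))$ corresponds to the left tilt $L_{S_{k}}(\mathcal{A})$. Since the finite-dimensional modules over any algebra form a finite-length category, this tilt is again a finite-length algebraic heart; and since non-degeneracy is preserved under mutation, the same argument applies to every iterated left tilt. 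Hence $\mathcal{A}$ can be tilted indefinitely in the sense of Section~3.

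At this point Theorem~\ref{prop} applies verbatim to the algebraic heart $\mathcal{A}\subset D_{fd}(\Gamma)$ and the given discrete central charge with finitely many stable objects: the left-most simple objects occurring in the mutation method are exactly the stable objects of $\mathcal{A}$, and in order of decreasing phase they define a sequence of simple tilts from $\mathcal{A}$ to $\mathcal{A}[-1]$. This sequence is finite, of length equal to the number $N$ of stable objects (with $N\ge n$, since each initial simple $S_{i}$ is stable for every central charge), which proves the first assertion.

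Finally, for the ``moreover'', I would transport this finite chain of simple tilts back through the Keller--Yang equivalences to obtain a finite sequence of mutations carrying $(Q,W)$, equivalently the silting object $\Gamma$, to its shift; because the mutation method only ever performs left tilts at left-most simples while rotating the central charge monotonically through an angle $\pi$, this sequence is a maximal green sequence for $(Q,W)$. The finite-dimensionality of $\mathfrak{J}(Q,W)$ then follows from the known fact that a quiver with non-degenerate potential admitting a maximal green sequence has finite-dimensional Jacobi algebra. I expect this last point to be the main obstacle: one has to keep track of the $c$-vectors / sign patterns under the Keller--Yang equivalences in order to certify that the purely categorical tilting sequence really is a maximal green sequence, and then invoke that finiteness result; everything else is a formal consequence of Theorem~\ref{prop} and the mutation machinery of Section~3.
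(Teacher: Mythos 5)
The paper gives no proof of this theorem --- it is quoted from \cite{18} --- so there is nothing in the text to compare against; but your argument is, in substance, the proof given there, and it is correct. The reduction to Theorem~\ref{prop} works exactly as you say: $mod$-$\mathfrak{J}(Q,W)$ is the heart of the canonical bounded t-structure on $D_{fd}(\Gamma)$, it is algebraic, and the Keller--Yang equivalences (together with the Derksen--Weyman--Zelevinsky fact that non-degeneracy, hence $2$-acyclicity of all iterated mutations, is preserved) identify every simple left tilt with the canonical heart of a mutated Ginzburg algebra, so the heart can be tilted indefinitely. One small remark on the step you flag as the main obstacle: the bookkeeping needed to certify that the categorical tilting sequence is a maximal green sequence is already contained in the induction in the proof of Theorem~\ref{prop} (and in the Corollary after Proposition~\ref{prop2}): every intermediate heart $\mathcal{A}'$ is the left tilt of $\mathcal{A}$ at a torsion pair, so $\mathcal{A}[-1]\leq\mathcal{A}'\leq\mathcal{A}$ throughout and the final heart is $\mathcal{A}[-1]$, which is precisely the ``green'' condition on the $c$-vectors; no separate sign computation is required. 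Granting that, Jacobi-finiteness follows from the Br\"ustle--Dupont--P\'erotin/Keller theorem that a quiver with non-degenerate potential admitting a maximal green sequence has finite-dimensional Jacobian algebra, exactly as you propose.
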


\begin{cor}
\label{corolary}
In this case for a stable object $S$ of $\mathcal{A}=mod-\mathfrak{J}(Q,W)$ we have $Hom_{\mathcal{A}}(S,S)=k$ and $Ext_{\mathcal{A}}^{1}(S,S)=0$.
\end{cor}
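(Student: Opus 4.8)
The plan is to realise $S$ as a simple object of one of the hearts appearing in the mutation sequence of Theorem \ref{theorem}, and then to reduce the computation of $Hom_{\mathcal{A}}(S,S)$ and $Ext^{1}_{\mathcal{A}}(S,S)$ to morphism spaces in $D_{fd}(\Gamma)$. By Theorem \ref{theorem}, discreteness of the central charge together with finiteness of the set of stable objects guarantees that the stable objects of $\mathcal{A}$, listed in order of decreasing phase, are precisely the simple objects at which the successive simple tilts $\mathcal{A}=\mathcal{A}_{0},\mathcal{A}_{1},\dots,\mathcal{A}_{N}=\mathcal{A}[-1]$ are performed inside $D_{fd}(\Gamma)$. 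Hence the given stable module $S$ is a simple object of some heart $\mathcal{A}_{j}$ in this chain. Since $\mathcal{A}$ and $\mathcal{A}_{j}$ are full subcategories of $D_{fd}(\Gamma)$ which are hearts of bounded $t$-structures, and $S$ belongs to both, the standard identifications $Hom_{\mathcal{A}}(S,S)=Hom_{D_{fd}(\Gamma)}(S,S)=Hom_{\mathcal{A}_{j}}(S,S)$ and $Ext^{1}_{\mathcal{A}}(S,S)=Hom_{D_{fd}(\Gamma)}(S,S[1])=Ext^{1}_{\mathcal{A}_{j}}(S,S)$ reduce the statement to computing $Hom_{\mathcal{A}_{j}}(S,S)$ and $Ext^{1}_{\mathcal{A}_{j}}(S,S)$.

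For this I would identify the intermediate hearts. Simple left tilting at the left-most simple object corresponds, on the combinatorial side, to Derksen--Weyman--Zelevinsky mutation of quivers with potential, and by the Keller--Yang derived equivalences attached to QP-mutation each $\mathcal{A}_{j}$ is, as an abelian category, equivalent to $mod-\mathfrak{J}(Q_{j},W_{j})$ for a quiver with potential $(Q_{j},W_{j})$ obtained from $(Q,W)$ by iterated mutation; since $W$ is non-degenerate, every $(Q_{j},W_{j})$ is again $2$-acyclic with non-degenerate potential, which is exactly what non-degeneracy asserts. Thus $S$ corresponds to a simple module $S_{v}$ at a vertex $v$ of the $2$-acyclic quiver $Q_{j}$, so $Hom_{\mathcal{A}_{j}}(S,S)=End(S_{v})=k$ by Schur's lemma (as $k$ is algebraically closed and $S_{v}$ is a finite-dimensional simple), while $\dim_{k}Ext^{1}_{\mathcal{A}_{j}}(S_{v},S_{v})$ equals the number of loops at $v$ in $Q_{j}$, which is zero. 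Feeding this into the reduction of the previous paragraph gives $Hom_{\mathcal{A}}(S,S)=k$ and $Ext^{1}_{\mathcal{A}}(S,S)=0$. Alternatively, one can avoid naming the $(Q_{j},W_{j})$: the simple $\mathfrak{J}(Q,W)$-modules are $3$-spherical in the $3$-Calabi--Yau category $D_{fd}(\Gamma)$ precisely because $Q$ is $2$-acyclic (one has $Ext^{1}(S_{i},S_{i})=0$ by $2$-acyclicity and $Ext^{2}(S_{i},S_{i})\cong Ext^{1}(S_{i},S_{i})^{\ast}=0$ by Calabi--Yau duality), and each simple tilt replaces the simples by a shift $S[-1]$ together with the images of the remaining simples under the spherical twist at $S$, which is an autoequivalence of $D_{fd}(\Gamma)$; since shifts and autoequivalences preserve $3$-sphericality, every simple object occurring in the chain, in particular $S$, is $3$-spherical, and a $3$-spherical object $X$ satisfies $Hom_{D_{fd}(\Gamma)}(X,X)=k$ and $Hom_{D_{fd}(\Gamma)}(X,X[1])=0$.

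The crux — and the only non-formal input — is the middle step: that categorical simple tilting at the left-most simple object is compatible with mutation of quivers with potential, so that $2$-acyclicity (equivalently, $3$-sphericality of the simples) is preserved along the entire mutation sequence. Granting that, the corollary is just Schur's lemma, a count of loops in a $2$-acyclic quiver, and the routine comparison of $Ext^{0}$ and $Ext^{1}$ inside a heart with morphism spaces in the ambient triangulated category.
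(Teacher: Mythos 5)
Your argument is correct and follows essentially the same route as the paper: realize the stable object $S$ as a simple of a heart in the tilting chain, identify that heart with $mod\text{-}\mathfrak{J}(Q',W')$ for a mutation-equivalent quiver with potential via the Keller--Yang triangle equivalences, and conclude from the fact that simples of such a module category over a $2$-acyclic QP have $End=k$ and no self-extensions. The extra detail you supply (the $Ext^1$--$Hom(-,-[1])$ comparison and the alternative $3$-sphericality argument) is consistent with, and only elaborates on, the paper's proof.
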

\begin{proof}
The stable objects appear as left-most simple objects of hearts of bounded t-structures in $D_{fd}(\Gamma)$ that are equivalent to the category of finite-dimensional modules over the Jacobi algebra of a quiver with potential $(Q',W')$ (mutation-equivalent to $(Q,W)$). This equivalence is induced by a $k$-linear triangle equivalence of $D_{fd}(\Gamma((Q',W'))$ and $D_{fd}(\Gamma((Q,W))$. The result follows from the fact that the simple objects of $mod-\mathfrak{J}(Q',W')$ have the claimed properties.  
\end{proof}

In the setting of Theorem \ref{theorem} there are finitely many stable objects and all come without self-extensions. Next we want allow infinitely many stable objects and stables with self-extensions.

\begin{rem}
$mod-\mathfrak{J}(Q,W)$ for a non-degenerate potential $W$ is representation-finite if and only if $Q$ is mutation-equivalent to a Dynkin quiver.\cite{88} Since stable modules are indecomposable there are finitely many stables for any central charge in this case.
\end{rem}
 
\begin{rem}
The full subcategory $\mathcal{P}(\phi)$ of semistable objects in $mod-\mathfrak{J}(Q,W)$ of phase $\phi$ together with the zero objects is an Abelian subcategory of finite length, i.e. there is a Jordan-H\"older filtration of every semisimple object in the simple objects given by the stable objects of phase $\phi$. It follows from Corollary \ref{corolary} that these subcategories are semisimple with a unique simple object. Thus in this case the definition of a discrete central charge is equivalent to the stronger definition of B. Keller in \cite{140} that demands the latter property to be true. 
\end{rem}

\begin{defn}
Let $\mathcal{A}\subset\mathcal{D}$ be a heart of a bounded t-structure of a triangulated category $\mathcal{D}$. We call a central charge $Z:K(\mathcal{A})\rightarrow\mathbb{C}$ \textit{rigid} if the stable objects $S$ with $Ext_{\mathcal{A}}^{1}(S,S)=0$ have a unique phase, i.e. all other stable objects have a different phase.
\end{defn}

\begin{prop}
\label{prop2}
Let $\mathcal{A}\subset\mathcal{D}$ be an algebraic heart of a bounded t-structure of a triangulated category $\mathcal{D}$ that we can tilt indefinitely. We assume the initial heart $\mathcal{A}$ and all hearts obtained by a finite sequence of simple tilts are rigid. Let $Z:K(\mathcal{A})\rightarrow\mathbb{C}$ be a rigid central charge on $\mathcal{A}$. Further we assume there is stable object $V$ with $Ext_{\mathcal{A}}^{1}(V,V)\neq 0$. Then there are infinitely many stable objects and their phases approach a limit point from the left and a limit point from the right.\footnote{These limit points can coincide.} In particular, the phases of the stable objects with respect to the central charge are not dense in two intervals in $(0,1]$ on the left/right of $\arg\ Z(V)$. The stable objects with phases in these intervals have no self-extensions. 
\end{prop}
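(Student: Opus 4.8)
\emph{Proof plan.} I would run the categorical mutation method of Section 3 in both directions and read the statement off the sequences of left‑most and right‑most simples that it produces. First, normalise $Z$ so that the simples of $\mathcal A$ lie strictly above the real axis and rotate $Z$ counter‑clockwise. Since $\mathcal A$ can be tilted indefinitely, the method produces hearts $\mathcal A=\mathcal A_{0},\mathcal A_{1},\mathcal A_{2},\dots$, where $\mathcal A_{i+1}=L_{S_{i}}(\mathcal A_{i})$ is the left tilt at the left‑most simple $S_{i}$ of $\mathcal A_{i}$ at the moment $\phi(S_{i})$ reaches $1$ (this is exactly the setting of Proposition \ref{lemma}). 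Exactly as in the proof of Theorem \ref{prop}, each $S_{i}$ lies in $\mathcal A$, is a stable object of $\mathcal A$, and $\phi(S_{0})>\phi(S_{1})>\cdots$ in the original charge; moreover no two simples are ever tied for left‑most, since two such simples would be distinct stable objects of $\mathcal A$ of equal phase, both with vanishing self‑extensions because every $\mathcal A_{i}$ is rigid, contradicting rigidity of $Z$. Since $\mathcal A_{i}$ is rigid and $S_{i}\in\mathcal A$, we get $Ext^{1}_{\mathcal A}(S_{i},S_{i})=Hom_{\mathcal D}(S_{i},S_{i}[1])=0$: \emph{every object produced by the method is a stable object of $\mathcal A$ without self‑extensions.}

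The second step is to show that $V$ sits inside every $\mathcal A_{i}$ and is never produced. I would prove by induction on $i$ that $V\in\mathcal A_{i}$ and that the Harder--Narasimhan factors of $V$ in $\mathcal A_{i}$ all have phase strictly below the phase of $S_{i}$ at the $i$‑th tilting moment (equivalently $\phi(V)<\phi(S_{i})$ in the original charge). For $i=0$: $V$ is indecomposable with $Ext^{1}_{\mathcal A}(V,V)\neq 0$, hence $V$ is not a direct sum of copies of the self‑extension‑free left‑most simple $S_{0}$, so $\phi(V)<\phi(S_{0})$. For the step, suppose $V\in\mathcal A_{i}$; if the top HN factor of $V$ in $\mathcal A_{i}$ had phase $1$ at the tilting moment, then, all composition factors of an object on the negative real ray being forced onto that ray, $S_{i}$ would be a subobject of $V$ in $\mathcal A_{i}$, whence a nonzero map $S_{i}\to V$ in $\mathcal D$ with both terms in $\mathcal A$ and $S_{i}$ simple, i.e.\ a monomorphism in $\mathcal A$; since $V$ is stable and $S_{i}\neq V$ this forces $\phi_{\mathcal A}(S_{i})<\phi_{\mathcal A}(V)$, contradicting $\phi(S_{i})=1$. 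Hence the HN‑phases of $V$ in $\mathcal A_{i}$ stay below $1$, so $Hom_{\mathcal A_{i}}(S_{i},V)=0$ and $V$ lies in the torsion‑free class $\{F\in\mathcal A_{i}\mid Hom_{\mathcal A_{i}}(S_{i},F)=0\}$ of the torsion pair defining $L_{S_{i}}(\mathcal A_{i})$, so $V\in\mathcal A_{i+1}$ and $\phi(V)<\phi(S_{i})$. Since $V\in\mathcal A_{i}\cap\mathcal A$ and $Ext^{1}_{\mathcal A_{i}}(V,V)=Hom_{\mathcal D}(V,V[1])=Ext^{1}_{\mathcal A}(V,V)\neq 0$, $V$ is never simple in $\mathcal A_{i}$, hence never equals an $S_{i}$.

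Next I would show the method does not terminate, and extract the right‑hand limit point. If it stopped after the tilt at $S_{m}$, then $\mathcal A_{m+1}$ still contains $V$ with positive maximal HN‑phase; the left‑most simple of $\mathcal A_{m+1}$ lies in $\mathcal A$ and has phase at least that HN‑phase (a simple of maximal phase dominates the HN‑phases of every object), so a further counter‑clockwise rotation of less than $\pi$ brings it to phase $1$ and the method performs another tilt, a contradiction. (Equivalently: termination would mean a rotation by $\pi$, i.e.\ $\mathcal A_{m+1}=\mathcal A[-1]$, and then, by the argument of Theorem \ref{prop}, every stable object of $\mathcal A$ — in particular $V$ — would occur among the $S_{i}$, impossible.) So $\{S_{i}\}_{i\ge 0}$ is infinite and consists of pairwise distinct stable objects of $\mathcal A$ without self‑extensions; in particular there are infinitely many stable objects. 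Their phases decrease and are $\ge\phi(V)$ by the previous step, so $\phi^{+}:=\lim_{i}\phi(S_{i})$ exists with $\phi^{+}\ge\phi(V)$. Arguing as in Theorem \ref{prop}, after the tilts at $S_{0},\dots,S_{i}$ every stable object of $\mathcal A$ not yet produced has phase $\le\phi(S_{i+1})$ (bounded by the largest simple phase of $\mathcal A_{i+1}$); hence every stable object of $\mathcal A$ with phase $>\phi^{+}$ is one of the $S_{i}$. Therefore in the interval $(\phi^{+},\phi^{+}+\varepsilon)\subset(0,1]$, which lies to the right of $\arg\ Z(V)$ since $\phi^{+}\ge\phi(V)$, the stable phases form a subset of $\{\phi(S_{i})\}$, accumulating only at the left endpoint $\phi^{+}$; so the stable phases are not dense there, and each stable object with phase in that interval is some $S_{i}$ and thus has no self‑extension.

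Finally, running the mutation method clockwise, with right tilts at the right‑most simples, gives by the symmetric argument infinitely many self‑extension‑free stable objects $T_{i}$ of $\mathcal A$ with increasing phases $\le\phi(V)$ converging to some $\phi^{-}\le\phi(V)$, and every stable object of phase $<\phi^{-}$ among the $T_{i}$; this produces the left‑hand limit point and an interval $(\phi^{-}-\varepsilon,\phi^{-})$ to the left of $\arg\ Z(V)$ with the same properties. Thus $\phi^{-}\le\phi(V)\le\phi^{+}$, the two limit points may coincide, and all assertions of Proposition \ref{prop2} follow. I expect the main obstacle to be the induction in the second step: carrying $V$ through the infinite tower of tilted hearts while keeping its HN filtration strictly below the simple being tilted is what actually makes the rigidity of every iterated tilt $\mathcal A_{i}$ do its work and prevents $V$ from ever being produced; it is also what forces the non‑dense interval to be placed at $(\phi^{+},\phi^{+}+\varepsilon)$ rather than immediately adjacent to $\arg\ Z(V)$ when $\phi^{+}>\phi(V)$.
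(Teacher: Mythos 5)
Your proposal follows essentially the same route as the paper's proof: run the mutation method counter-clockwise and clockwise, use rigidity of all iterated tilted hearts to see that each left-most (resp.\ right-most) simple is an exceptional stable object of $\mathcal{A}$ with a unique phase, observe that $V$ survives every tilt but can never become left-most, and conclude non-termination and the two limit points. Your write-up merely supplies more detail than the paper (the induction carrying $V$ through the tower and the explicit non-termination argument are left implicit there); the one slip is cosmetic: the interval $(\phi^{+},\phi^{+}+\varepsilon)$ of phases larger than $\phi(V)$ lies to the \emph{left} of the ray $Z(V)$ in the paper's convention (cf.\ Figure 1), not to the right.
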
                  
\begin{proof}
The assumptions imply that all simple objects have a unique phase. Thus there is a left-most simple and we can apply the mutation algorithm. We use the same arguments as in the proof of Theorem \ref{prop}: We rotate counter clock-wise till the first left-most simple $S$ just leaves the upper halfplane $\overline{\mathbb{H}}$. We end up in a heart $\mathcal{A}'$ given by the left tilt at $S$ of $\mathcal{A}$. All stable objects of $\mathcal{A}$ except $S$ are objects of $\mathcal{A}'$. In particular, the stable object $V$. There is an unique left-most simple without self-extensions in this heart that is a stable object in $\mathcal{A}$ and we can repeat the mutation algorithm. The object $V$ with self-extensions can never be left-most by the assumptions. Therefore there must in every step of the mutation method lie a simple left-most on the left of the ray $Z(V)$ and we can repeat the procedure indefinitely.\\
The same arguments applied to the mutation method with clock-wise rotation and right tilts show there are infinitely many stable objects right to the ray $Z(V)$.
\end{proof}

For two hearts $\mathcal{A}_{1},\mathcal{A}_{2}$ with associated bounded t-structures $\mathcal{F}_{1}, \mathcal{F}_{2}\subset \mathcal{D}$ we say $\mathcal{A}_{1}\leq \mathcal{A}_{2}$ if and only if $\mathcal{F}_{2}\subset\mathcal{F}_{1}$.

\begin{cor}
Under the conditions of Proposition \ref{prop2} the Abelian category $\mathcal{A}$ has infinitely many torsion classes $(\mathcal{T},\mathcal{F})$ whose associated hearts $\mathcal{A}'=\left\langle \mathcal{F},\mathcal{T}[-1]\right\rangle$ of bounded t-structures of $D^{b}(mod-kQ)$ fulfill $\mathcal{A}[-1]\leq\mathcal{A}'\leq\mathcal{A}$. 
\end{cor}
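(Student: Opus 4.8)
The plan is to read the torsion pairs directly off the mutation method already used to prove Proposition~\ref{prop2}. Running the counter-clockwise mutation algorithm from a stability condition $\sigma\in Stab(\mathcal{A})$ produces an infinite sequence of hearts $\mathcal{A}=\mathcal{A}^{(0)},\mathcal{A}^{(1)},\mathcal{A}^{(2)},\ldots$, where $\mathcal{A}^{(m+1)}$ is the left tilt of $\mathcal{A}^{(m)}$ at its unique left-most simple object $S^{(m)}$. That the sequence is infinite is precisely the content of Proposition~\ref{prop2}: the self-extending stable object $V$ is never left-most, so at every stage there is a left-most simple strictly to the left of the ray $Z(V)$ and the algorithm never completes a rotation by $\pi$.

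Next I would invoke the inductive argument from the proof of Theorem~\ref{prop}. As long as no rotation by $\pi$ has been completed --- which is always the case here --- every simple object of every heart $\mathcal{A}^{(m)}$ lies in $\mathcal{A}$ or in $\mathcal{A}[-1]$, and by Lemma~3.2 of \cite{190} the heart $\mathcal{A}^{(m)}$ is the left tilt $\langle\mathcal{F}_m,\mathcal{T}_m[-1]\rangle$ of the \emph{fixed} heart $\mathcal{A}$ at a torsion pair $(\mathcal{T}_m,\mathcal{F}_m)$ of $\mathcal{A}$; equivalently, by \cite{20}, one has $\mathcal{A}[-1]\le\mathcal{A}^{(m)}\le\mathcal{A}$ in the order on hearts introduced above. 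Hence all the $\mathcal{A}^{(m)}$ are hearts of bounded t-structures on $D^{b}(mod-kQ)$ lying between $\mathcal{A}[-1]$ and $\mathcal{A}$, each arising from a torsion pair of $\mathcal{A}$.

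It remains to see that infinitely many of the pairs $(\mathcal{T}_m,\mathcal{F}_m)$ are distinct. Each step from $\mathcal{A}^{(m)}$ to $\mathcal{A}^{(m+1)}$ is a proper left tilt, since the nonzero simple $S^{(m)}$ is replaced by $S^{(m)}[-1]$, which is not isomorphic to it in $D^{b}(mod-kQ)$; so $\mathcal{A}^{(m+1)}<\mathcal{A}^{(m)}$ strictly, the $\mathcal{A}^{(m)}$ form a strictly decreasing chain in $[\mathcal{A}[-1],\mathcal{A}]$, and in particular they are pairwise distinct. Since a torsion pair $(\mathcal{T},\mathcal{F})$ of $\mathcal{A}$ is recovered from its left tilt $\mathcal{A}'=\langle\mathcal{F},\mathcal{T}[-1]\rangle$ by $\mathcal{F}=\mathcal{A}'\cap\mathcal{A}$ and $\mathcal{T}=(\mathcal{A}'\cap\mathcal{A}[-1])[1]$, distinct hearts yield distinct torsion pairs, which gives the asserted infinitely many torsion classes. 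The only step that is not bookkeeping is the one in the second paragraph --- that each mutation heart is a tilt of the original $\mathcal{A}$, not merely of its predecessor, so that the torsion pairs really live in $\mathcal{A}$ and the bound $\mathcal{A}[-1]\le\mathcal{A}^{(m)}\le\mathcal{A}$ holds; this is exactly the induction already carried out for Theorem~\ref{prop} (simples staying in $\mathcal{A}\cup\mathcal{A}[-1]$, together with Lemma~3.2 of \cite{190}), and in the present situation it needs only to be cited and run for the infinite, non-terminating sequence.
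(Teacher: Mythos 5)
Your proof is correct and takes essentially the same route as the paper: both reduce to the fact, established in the proof of Theorem \ref{prop}, that every heart appearing in the (now non-terminating) mutation method is a left tilt of the fixed heart $\mathcal{A}$ at a torsion pair in $\mathcal{A}$, and then verify $\mathcal{A}[-1]\leq\mathcal{A}'\leq\mathcal{A}$ from the definition of the order on hearts. Your explicit argument that the hearts form a strictly decreasing chain and that distinct hearts recover distinct torsion pairs is a point the paper leaves implicit, and is a welcome addition.
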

\begin{proof}
Given the heart $\mathcal{A}$ of a bounded t-structure $\mathcal{G}\subset \mathcal{D}$ the t-structure is the extension-closed subcategory $$\mathcal{G}=\left\langle \mathcal{A},\mathcal{A}[1],\mathcal{A}[2],\ldots\right\rangle.$$ By the proof of Theorem \ref{prop} every heart $\mathcal{A}'$ appearing in the mutation method is a left-tilt of the initial heart $\mathcal{A}$ at same torsion pair $(\mathcal{T},\mathcal{F})$ in $\mathcal{A}$. We have $\mathcal{F}\subset\mathcal{A}'$ and $\mathcal{T}\subset \mathcal{A}'[1]$. Since the torsion pair $(\mathcal{T},\mathcal{F})$ generates $\mathcal{A}$ we have $\mathcal{A}'\leq\mathcal{A}$. Further $\mathcal{F}$ and $\mathcal{T}[-1]$ lie in the t-structure $$\left\langle \mathcal{A}[-1],\mathcal{A},\mathcal{A}[1],\ldots\right\rangle$$ associated to the heart $\mathcal{A}[-1]$. Thus we have $\mathcal{A}[-1]\leq\mathcal{A}'$. 
\end{proof}

\section{Stable representations of Euclidean quivers}

Let $k$ be an algebraically closed field and $Q$ an acyclic quiver. In this and the next section we study the case that we have a central charge on $mod-kQ$ with at least one stable object $V$ with $Ext_{Q}^{1}(V,V)\neq 0$. By Theorem \ref{prop} and Corollary \ref{corolary} there can be no discrete central charge with finitely many stables with this property. In fact, there will be infinitely many stable objects with the same class $[S]$ in the Grothendieck group of $mod-kQ$.\\

Let us first consider the Kronecker quiver 
\begin{align}  
\label{kroneckerquiver}
\xymatrix{1 \ar@<+.7ex>[r]\ar@<-.7ex>[r] & 2}.
\end{align}

Let us denote by $S_{1}$ and $S_{2}$ the simple representations associated with the two vertices. If the phase of $S_{2}$ is greater than the phase of $S_{1}$, the simples are the only stable objects. If the phase of $S_{1}$ is greater than the phase of $S_{2}$ the stable objects are precisely all postprojective and preinjective representations together with the representations in the $\mathbb{P}_{k}^{1}$-family with dimension vector $(1,1)$:
\begin{align}
\label{kronecker}
  \xymatrix{k \ar@<+.7ex>[r]^{\lambda_0}\ar@<-.7ex>[r]_{\lambda_1} & k}
  \end{align}
with $(\lambda_0:\lambda_1)\in\mathbb{P}_{k}^{1}$. We see in this case there are infinitely many stable objects lying on a ray in the upper half plane. The phases of the postprojective and preinjective indecomposables approach this accumulation ray from right and left (see figure 1).\\

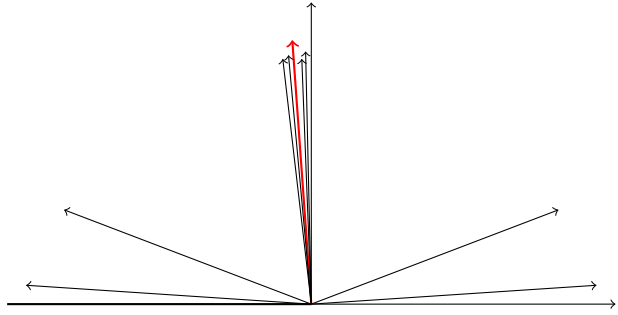
\begin{figure}
\begin{center}
\begin{tikzpicture}[scale=2.5]
    \draw [->] (0,0) -- (1.6,0);
    \draw [->] (0,0)--(0,1.6);
    \draw [thick] (-1.6,0) -- (0,0);
    \draw [->] (0,0) --(1.5,0.1);
    \draw[->] (0,0)--(-1.5,0.1);
    \draw[red,thick][->] (0,0)--(-0.1,1.4);
    \draw[->] (0,0)--(-0.15,1.3);
    \draw[->] (0,0)--(-0.05,1.3);
    \draw[->] (0,0)--(-0.12,1.32);
    \draw[->] (0,0)--(-0.03,1.34);
    \draw[->] (0,0)--(1.3,0.5);
    \draw[->] (0,0)--(-1.3,0.5);
\end{tikzpicture}
\end{center}
\caption{Phases for the Kronecker quiver with $\phi(S_1)>\phi(S_2)$. In red the central charge of the stables with dimension vector $(1,1)$. The central charges of the stable postprojective/preinjective indecomposables lie to the right/left of this ray.}
\end{figure}

Let $Q=(Q_0,Q_1)$ be an acyclic quiver with head and tail map $h,t:Q_1\rightarrow Q_0$. The \textit{Euler form} is the bilinear form on the Grothendieck group $K(mod-kQ)=\mathbb{Z}Q_{0}$ given by 
\begin{align*}
\left\langle\ ,\ \right\rangle:\mathbb{Z}Q_{0}\times \mathbb{Z}Q_{0}&\longrightarrow \mathbb{Z}\\
(x,y)&\longmapsto \sum_{i\in Q_0}x_i y_i-\sum_{a\in Q_1}x_{t(a)}y_{h(a)}
\end{align*}

for $x=(x_i)_{i\in Q_0}, y=(y_i)_{i\in Q_0}$.\\

For two modules $X,Y\in mod-kQ$ we have the important formula 
\begin{align}
\label{formula}
\left\langle \underline{dim}\ X,\underline{dim}\ Y\right\rangle=dim\ Hom_{Q}(X,Y)-dim\ Ext_{Q}^{1}(X,Y).
\end{align}

To the Euler form associated is the \textit{Tits form}, the quadratic form on $\mathbb{Z}Q_{0}$ $$q_{Q}(x)=\sum_{i\in Q_0}x_i^2-\sum_{a\in Q_1}x_{t(a)}x_{h(a)}.$$ The \textit{radical} of $q$ is $rad(q)=\left\{x\in\mathbb{Z}Q_0|q(x)=0\right\}$. In the case $Q$ is Dynkin we have $rad(q)=0$ since the Tits form is positive definite. For an Euclidean quiver, i.e. an acyclic quiver with underlying graph an extended Dynkin graph, we have $rad(q)=\mathbb{Z}\delta$ for some $\delta\in \mathbb{Z}Q_0$ with $\delta_i>0$ for all $i\in Q_0$.\\

Let $S_1,\ldots, S_n$ be the simple modules of $mod-kQ$ where $n=\#Q_0$, $P(1),\ldots, P(n)$ its projective covers and $I(1),\ldots, I(n)$ its injective envelopes. An indecomposable module $X$ is called \textit{postprojective} if there is an integer $m\geq 0$ and $i\in Q_0$ such that $X\cong \tau^{-m}P(i)$, it is called \textit{preinjective} if there is an integer $m\geq 0$ and $i\in Q_0$ such that $X\cong \tau^{m}I(i)$ and it is called \textit{regular} if it is neither postprojective nor preinjective. Here $\tau$ is the Auslander-Reiten translation.

\begin{figure}
\begin{center}
\begin{tikzpicture}[scale=1.3]
    \draw (0,0) -- (0,2) -- (2,2); 
    \draw (0,0)--(2,0);
    \node at (1,1) {postprojective};
    \draw (2.2,0) -- (2.2,2) -- (4.2,2) --(4.2,0)--(2.2,0); 
    \node at (3.2,1) {regular};
    \draw (4.4,2)--(6.4,2);
    \draw (4.4,0)--(6.4,0)--(6.4,2);
    \node at (5.4,1) {preinjective};
\end{tikzpicture}
\end{center}
\caption{Components of the Auslander-Reiten quiver.}
\end{figure}
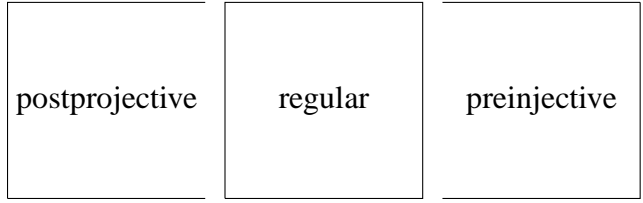

\begin{thm}\cite{220}
\label{dlabringel}
Let $Q$ be an Euclidean quiver and $X$ an indecomposable module of $Q$. Then $X$ is
\begin{enumerate}
\item[(i)] postprojective if and only if $\left\langle \delta, \underline{dim}\ X\right\rangle<0$,
\item[(ii)] preinjective if and only if $\left\langle \delta, \underline{dim}\ X\right\rangle>0$,
\item[(iii)] regular if and only if $\left\langle \delta, \underline{dim}\ X\right\rangle=0$.
\end{enumerate}
\end{thm}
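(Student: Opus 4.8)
The plan is to introduce the linear functional $\partial(v):=\langle\delta,v\rangle$ on $K(mod-kQ)=\mathbb{Z}Q_0$ (the \emph{defect}) and to compute it on the three Auslander--Reiten components. First I would record two elementary properties. Since $q_Q$ is positive semidefinite, $rad(q_Q)=\mathbb{Z}\delta$ is also the radical of the symmetrised Euler form, so $\langle\delta,v\rangle=-\langle v,\delta\rangle$ for all $v$. Combining this with the Coxeter identity $\langle x,y\rangle=-\langle y,\Phi x\rangle$ and with $\Phi\delta=\delta$ (standard for Euclidean quivers: $\Phi$ preserves $rad(q_Q)=\mathbb{Z}\delta$ and fixes the class of a homogeneous module) gives $\partial(\Phi v)=\partial(v)$. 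Since $\underline{dim}\,\tau^{\pm1}X=\Phi^{\pm1}\,\underline{dim}\,X$ whenever $X$ has no projective, resp.\ injective, direct summand, the defect is constant on $\tau$-orbits, in particular along the entire postprojective and preinjective components.

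Next I would evaluate $\partial$ on projectives and injectives by testing \eqref{formula} on the simples: $\langle\underline{dim}\,P(i),v\rangle=v_i$ and $\langle v,\underline{dim}\,I(i)\rangle=v_i$ for all $v\in\mathbb{Z}Q_0$. Putting $v=\delta$ and using $\langle\delta,-\rangle=-\langle-,\delta\rangle$ yields $\partial(P(i))=-\delta_i<0$ and $\partial(I(i))=\delta_i>0$, where $\delta_i>0$ is the positivity property of $\delta$ recalled before the theorem. By the $\tau$-invariance above, every postprojective $X\cong\tau^{-m}P(i)$ has $\partial(X)=-\delta_i<0$ and every preinjective $X\cong\tau^{m}I(i)$ has $\partial(X)=\delta_i>0$. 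This settles one implication in (i) and in (ii).

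For (iii) I would use the structure of the regular part of $mod-kQ$ for a tame hereditary algebra: it is a family of pairwise orthogonal tubes, and since $k$ is algebraically closed infinitely many of them are homogeneous, each with $\tau$-fixed quasi-simple $E_\lambda$; being $\tau$-fixed, $\underline{dim}\,E_\lambda$ lies in the $\Phi$-fixed line, hence is a positive multiple of $\delta$. Given $X$ regular in a tube $\mathcal{T}$, pick a homogeneous tube $\mathcal{T}_\lambda\neq\mathcal{T}$; orthogonality gives $Hom_Q(E_\lambda,X)=Ext^1_Q(E_\lambda,X)=0$, so \eqref{formula} forces $\langle\underline{dim}\,E_\lambda,\underline{dim}\,X\rangle=0$, i.e.\ $\partial(X)=0$. (Alternatively: in a tube of rank $r$ the quasi-simples have dimension vectors summing to $\delta$ and all share one defect value by $\tau$-invariance, so $r\cdot\partial(E_1)=\partial(\delta)=q_Q(\delta)=0$.)

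It remains to upgrade the three implications to equivalences. The postprojective, regular and preinjective indecomposables partition the indecomposable modules, and $\mathbb{Z}=\mathbb{Z}_{<0}\sqcup\{0\}\sqcup\mathbb{Z}_{>0}$; the computations above show $\partial$ carries each block of the first partition into the matching block of the second, which suffices: if $X$ is indecomposable with $\partial(X)<0$ then $X$ is neither regular nor preinjective, hence postprojective, and symmetrically for $\partial(X)=0$ and $\partial(X)>0$. I expect the regular case to be the main obstacle, since it is the only step importing genuine tame-hereditary representation theory (the classification of the regular part as an orthogonal family of tubes with the homogeneous ones $\tau$-fixed of dimension vector $\delta$); everything else is bilinear algebra of the Euler form together with the $\tau$--$\Phi$ dictionary.
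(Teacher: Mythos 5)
The paper does not prove this theorem; it is quoted from Dlab--Ringel \cite{220} without argument, so I can only assess your proof on its own terms. Most of it is correct and follows the standard route: the evaluations $\langle\underline{dim}\,P(i),v\rangle=v_i$ and $\langle v,\underline{dim}\,I(i)\rangle=v_i$, the antisymmetry of the Euler form against the radical vector $\delta$, the $\Phi$-invariance of the defect $\partial=\langle\delta,-\rangle$ and hence its constancy along $\tau$-orbits, and the final trichotomy argument (each Auslander--Reiten class lands in the matching sign class of $\mathbb{Z}$, and both families are partitions, so the three one-way implications upgrade to equivalences) are all sound. This completely disposes of the postprojective and preinjective directions.

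The weak point is the regular case, which you correctly identify as carrying all the content. Both of your arguments for ``$X$ regular $\Rightarrow\partial(X)=0$'' take as input the structure of the regular part of a tame hereditary algebra: pairwise Hom- and Ext-orthogonal tubes, existence of homogeneous tubes, quasi-simples at the mouth summing to $\delta$. In the standard references (Dlab--Ringel \cite{220}, Ringel \cite{270}, Crawley-Boevey \cite{240}) that tube structure is a \emph{consequence} of the defect theorem, so as written this step is circular unless you import the tube picture from an independent development. The usual self-contained argument goes the other way around: some power of the Coxeter transformation satisfies $\Phi^{N}(x)=x+c\,\langle\delta,x\rangle\,\delta$ with $c>0$; hence if $X$ is indecomposable with $\langle\delta,\underline{dim}\,X\rangle<0$ the vectors $\Phi^{mN}\underline{dim}\,X$ acquire negative coordinates for $m\gg0$, so the forward $\tau$-orbit must terminate in a projective and $X$ is postprojective; dually for positive defect; and ``regular $\Rightarrow$ defect $0$'' then follows by elimination. (A smaller quibble: to see that a $\tau$-fixed quasi-simple $E_\lambda$ has $\underline{dim}\,E_\lambda\in\mathbb{Z}\delta$ you invoke one-dimensionality of the fixed space of $\Phi$, which is a nontrivial Jordan-block fact; it is cleaner to note that $\tau E_\lambda\cong E_\lambda$ gives $Ext^1_Q(E_\lambda,E_\lambda)\cong D\,Hom_Q(E_\lambda,E_\lambda)\neq0$ by Auslander--Reiten duality, so formula $(\ref{formula})$ gives $q_Q(\underline{dim}\,E_\lambda)\leq0$ and positive semidefiniteness forces $\underline{dim}\,E_\lambda\in rad(q_Q)=\mathbb{Z}\delta$.)
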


The linear map $\left\langle\delta ,\ \right\rangle$ is called the \textit{Dlab-Ringel defect}.\\

The \textit{Coxeter transformation} $\Phi:\mathbb{Z}Q_0\rightarrow\mathbb{Z}Q_0$ is defined by $\Phi(\underline{dim}\ P(i))=-\underline{dim}\ I(i)$ for all $i\in Q_0$. As a matrix one has $\Phi=-(C^{-1})^{T}C\in GL(n,\mathbb{Z})$ where $C=(c_{ij})$ is the \textit{Cartan matrix} given by $c_{ij}=Hom_{Q}(P(i),P(j))$. The Euler form can be written in the form $$\left\langle x,y\right\rangle=x^{T}C^{-1}y.$$ For an indecomposable module $X$ that is not projective we have $${dim}(\tau X)=\Phi(\underline{dim}\ X).$$

Let $Q$ be an Euclidean quiver. Then its \textit{set of roots} is $\left\{x\in \mathbb{Z}Q_0|q(x)\leq 1\right\}$. $x\in \mathbb{Z}Q_0$ is a \textit{real} root if $q(x)=1$ and a \textit{imaginary} root if $q(x)=0$. The set of imaginary roots is $\left\{r\delta|r\in\mathbb{Z}\setminus\left\{0\right\}\right\}$ where $\delta$ was described above.\\

The stable modules $S$ in $mod-kQ$ with respect to a central charge are indecomposable with $Hom_{Q}(S,S)=k$. A module $X$ with $Hom_{Q}(X,X)=k$ is called a \textit{brick}. A brick is indecomposable. We call an indecomposable module $X$ exceptional if $Ext_{Q}^{1}(X,X)=0$. An exceptional module is a brick.   

\begin{lem}
\label{achscheisse}
Let $Q$ be an Euclidean quiver and $X\in mod-kQ$ be a brick. If $Ext_{Q}^{1}(X,X)=0$, then $\underline{dim}\ X$ is a real root, if $Ext_{Q}^{1}(X,X)\neq 0$, then $\underline{dim}\ X$ is a imaginary root.
\end{lem}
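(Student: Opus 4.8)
The plan is to use the Euler form identity (\ref{formula}) together with the classification of indecomposables for Euclidean quivers (Theorem \ref{dlabringel}) and the structure of the Tits form on $\mathbb{Z}Q_0$.

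First I would observe that since $X$ is a brick, $\dim\,\mathrm{Hom}_Q(X,X)=1$, so applying (\ref{formula}) with $Y=X$ gives
\[
q_Q(\underline{dim}\,X)=\left\langle \underline{dim}\,X,\underline{dim}\,X\right\rangle = 1-\dim\,\mathrm{Ext}^1_Q(X,X).
\]
Hence if $\mathrm{Ext}^1_Q(X,X)=0$ then $q_Q(\underline{dim}\,X)=1$, which is exactly the definition of a real root. This settles the first assertion immediately. For the second assertion, if $\mathrm{Ext}^1_Q(X,X)\neq 0$ then $q_Q(\underline{dim}\,X)=1-\dim\,\mathrm{Ext}^1_Q(X,X)\leq 0$. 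Since the Tits form of a Euclidean quiver is positive semidefinite (its radical is $\mathbb{Z}\delta$ and it is positive definite on a complement), we have $q_Q(v)\geq 0$ for all $v\in\mathbb{Z}Q_0$; therefore $q_Q(\underline{dim}\,X)=0$, and in particular $\dim\,\mathrm{Ext}^1_Q(X,X)=1$. So $\underline{dim}\,X$ is an imaginary root.

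I would then remark that the positive semidefiniteness of $q_Q$ for Euclidean quivers is the standard fact already invoked in the excerpt (where $\mathrm{rad}(q)=\mathbb{Z}\delta$ with $\delta$ strictly positive); one can cite the same source as Theorem \ref{dlabringel}, or simply note that the underlying graph being extended Dynkin forces the quadratic form to be positive semidefinite with one-dimensional radical. No further case analysis on postprojective/preinjective/regular type is actually needed for the statement as phrased, though one could add as a bonus that the imaginary root case forces $\underline{dim}\,X\in\mathbb{Z}\delta$, hence $X$ is regular by Theorem \ref{dlabringel}(iii).

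The only subtle point, and the one I would be most careful about, is making sure the inequality $q_Q(\underline{dim}\,X)\leq 0$ together with positive semidefiniteness genuinely forces equality — this is immediate from semidefiniteness, but it is worth stating explicitly that $X\neq 0$ guarantees $\underline{dim}\,X\neq 0$ so the argument is not vacuous. Beyond that the proof is essentially a two-line computation with the Euler form; there is no real obstacle.
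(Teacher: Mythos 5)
Your proof is correct and follows essentially the same route as the paper: apply the Euler form identity (\ref{formula}) with $Y=X$, use the brick hypothesis to get $q_Q(\underline{dim}\,X)=1-\dim\,\mathrm{Ext}^1_Q(X,X)$, and invoke positive semidefiniteness of the Tits form for Euclidean quivers to force $q_Q(\underline{dim}\,X)=0$ in the self-extension case. Your write-up is somewhat more explicit than the paper's (which compresses the computation into one line and says ``the other case follows similarly''), but there is no substantive difference.
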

\begin{proof}
Let $X$ be a brick with $Ext_{Q}^{1}(X,X)\neq 0$. By formula (\ref{formula}) we have $$q_Q(\underline{dim}\ X)=\left\langle \underline{dim}\ X,\underline{dim}\ X\right\rangle=dim\ Hom_{Q}(X,X)-dim\ Ext_{Q}^{1}(X,X)=0$$ since $q_Q$ is positive semi-definite for a Euclidean quiver $Q$. The other case follows similary.
\end{proof}

The exceptional modules for the (generalized) Kronecker quiver are exactly the postprojective and preinjective modules. In general, there are exceptional modules in the regular component. \cite{250} An example is the following representation of the $\tilde{A}_2$-quiver: $$\xymatrix{k \ar[rr] \ar[rd]_{id}&& 0\ar[ld]\\
       & k}.$$

\begin{lem}
\label{achscheisse2}
Let $Q$ be an Euclidean quiver. There are at most finitely many regular modules which are exceptional.
\end{lem}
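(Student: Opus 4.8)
By Lemma \ref{achscheisse} and Theorem \ref{dlabringel}, the dimension vector of a regular exceptional module is a real root of zero defect; but since $x$ and $x+\delta$ have the same defect and the same value under the Tits form, there are already infinitely many such roots, so the statement cannot be read off the dimension vectors alone. The plan is instead to use the description of the regular part of $mod-kQ$ by tubes. Recall from the structure theory of tame hereditary algebras (cf.\ \cite{220}) that for a Euclidean quiver $Q$ the indecomposable regular modules are distributed over pairwise orthogonal standard stable tubes $\mathcal{T}_{\lambda}$, indexed by the points $\lambda$ of $\mathbb{P}_{k}^{1}$; all but finitely many of them are homogeneous (of rank $1$), and the ranks $r_{\lambda}$ satisfy $\sum_{\lambda}(r_{\lambda}-1)=\#Q_{0}-2$, so that there are only finitely many non-homogeneous tubes, each of finite rank. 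I would treat homogeneous and non-homogeneous tubes separately.

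If $X$ lies in a homogeneous tube, then the quasi-simple module $E$ at its mouth satisfies $\tau E\cong E$, hence $\underline{dim}\ E=\Phi(\underline{dim}\ E)$ and $\underline{dim}\ E$ is a positive multiple of $\delta$; consequently $\underline{dim}\ X=m\delta$ for some $m\geq 1$, an imaginary root. Applying formula (\ref{formula}) to $X$ gives $0=q_{Q}(\underline{dim}\ X)=\dim\ Hom_{Q}(X,X)-\dim\ Ext^{1}_{Q}(X,X)$, so $Ext^{1}_{Q}(X,X)\neq 0$ and $X$ is not exceptional. Hence every regular exceptional module lies in one of the finitely many non-homogeneous tubes, and it is enough to bound the number of exceptional modules in a fixed standard stable tube $\mathcal{T}$ of rank $r\geq 2$.

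An indecomposable $M\in\mathcal{T}$ is determined by its quasi-socle (one of the $r$ quasi-simple modules of $\mathcal{T}$) and its quasi-length $\ell\geq 1$, and for $\ell=qr+s$ with $0\leq s<r$ one has $\underline{dim}\ M=q\,\delta+\underline{dim}\ N$, where $N$ is the module of $\mathcal{T}$ with the same quasi-socle as $M$ and quasi-length $s$ (and $N=0$ when $s=0$); as $\delta$ spans the radical of $q_{Q}$ this gives $q_{Q}(\underline{dim}\ M)=q_{Q}(\underline{dim}\ N)$. Suppose $\ell\geq r$. If $s=0$ the previous paragraph already shows $M$ is not exceptional; so assume $s\geq 1$. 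Writing $E\subset M$ for the unique subobject of $\mathcal{T}$ of quasi-length $r$, the quotient $M/E$ has the same quasi-socle as $M$ (here $\tau^{-r}$ acts as the identity on the quasi-simples of a rank-$r$ tube) and quasi-length $\ell-r\geq 1$, so $M/E$ is isomorphic to the subobject $M'\subset M$ of quasi-length $\ell-r$; the composite $M\longrightarrow M/E\cong M'\hookrightarrow M$ is then a nonzero endomorphism of $M$ with kernel $E\neq 0$, so $\dim\ Hom_{Q}(M,M)\geq 2$. Since $N$ is a brick (of quasi-length $<r$), Lemma \ref{achscheisse} shows $\underline{dim}\ N$ is a root and hence $q_{Q}(\underline{dim}\ M)=q_{Q}(\underline{dim}\ N)\leq 1$; formula (\ref{formula}) now yields $\dim\ Ext^{1}_{Q}(M,M)=\dim\ Hom_{Q}(M,M)-q_{Q}(\underline{dim}\ M)\geq 1$. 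Thus only the $r(r-1)$ indecomposables of quasi-length at most $r-1$ in $\mathcal{T}$ can be exceptional, and summing over the finitely many non-homogeneous tubes finishes the proof.

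The hard part will be the quasi-length estimate just sketched: one must exclude self-extensions for every indecomposable of quasi-length $\geq r$ in a tube of rank $r$. Above this is done by producing a nonzero, non-invertible endomorphism and feeding $\dim\ Hom_{Q}(M,M)\geq 2$ into formula (\ref{formula}) together with $q_{Q}(\underline{dim}\ M)\leq 1$; alternatively one may simply invoke the classical fact that an indecomposable in a standard stable tube of rank $r$ is a brick exactly when its quasi-length is at most $r$, and is exceptional exactly when its quasi-length is at most $r-1$. In either case, beyond the tube structure, the essential inputs are formula (\ref{formula}) and Theorem \ref{dlabringel}.
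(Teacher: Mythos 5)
Your proof is correct, but it takes a genuinely different route from the paper. The paper's argument is a three-line reduction to dimension vectors: it quotes from \cite{240} (consequence (3) in \S 9) that every regular \emph{brick} $X$ satisfies $\delta-\underline{dim}\ X\in\mathbb{N}^{n}$, so only finitely many classes in $K(mod-kQ)$ can occur as dimension vectors of regular bricks, and since an exceptional module is the unique indecomposable in its (real root) class, finiteness follows. Note that this shows your opening claim -- that the statement ``cannot be read off the dimension vectors alone'' -- is too pessimistic: the defect-zero real-root condition is indeed insufficient, but the sharper componentwise bound $\underline{dim}\ X\leq\delta$ for regular bricks does settle it at the level of dimension vectors. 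Your argument instead invokes the full structure theory of the regular component (standard stable tubes indexed by $\mathbb{P}_{k}^{1}$, the tubular type formula $\sum_{\lambda}(r_{\lambda}-1)=\#Q_{0}-2$), rules out homogeneous tubes via imaginary roots and formula (\ref{formula}), and then excludes quasi-length $\geq r$ in a rank-$r$ tube by exhibiting a non-invertible endomorphism $M\twoheadrightarrow M/E\cong M'\hookrightarrow M$ and feeding $\dim Hom_{Q}(M,M)\geq 2$ and $q_{Q}(\underline{dim}\ M)\leq 1$ back into formula (\ref{formula}); all steps check out (the one fact you use without proof, that quasi-length $<r$ implies brick, is standard and is anyway only needed to get $q_{Q}(\underline{dim}\ N)\leq 1$). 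What your longer route buys is an explicit and sharp answer -- the exceptional regular modules are precisely located among the at most $r(r-1)$ modules of quasi-length $\leq r-1$ in each of the finitely many non-homogeneous tubes -- whereas the paper's route buys brevity at the cost of outsourcing the key inequality to \cite{240}.
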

\begin{proof}
Let $Q$ have $n$ vertices. By consequence (3) in §9 of \cite{240} for a regular module $X$ which is a brick we have $\delta-\underline{dim}\ X\in\mathbb{N}^{n}$. Thus there are only finitely many classes $\alpha\in K(mod-kQ)$ with $\alpha=\underline{dim}\ X$ for a regular brick $X$. Since $\underline{dim}\ X$ is a positive real root for an exceptional module $X$ there is exactly one indecomposable module $X$ with class $\underline{dim}\ X$. This proves the lemma.
\end{proof}

The next Lemma is the first result on the phases of stable representations of quivers.

\begin{lem}
\label{achscheisse3}
Let $Q$ be an Euclidean quiver. Let $V$ with $Ext_{Q}^{1}(V,V)\neq 0$ be a stable module with respect to a central charge $Z:K(mod-kQ)\rightarrow\mathbb{C}$ on $mod-kQ$. Then there is no stable preinjective/postprojective indecomposable with central charge $Z(X)$ right/left to the ray $Z(\delta)$ in the upper halfplane $\overline{\mathbb{H}}$.
\end{lem}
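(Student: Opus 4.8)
The plan is to reduce the statement to the Dlab--Ringel defect together with the vanishing of $Hom$ between semistable objects of strictly comparable phase. First I would note that a stable module is a brick, so Lemma~\ref{achscheisse} applies to $V$: since $Ext_{Q}^{1}(V,V)\neq 0$, the class $\underline{dim}\,V$ is an imaginary root, hence $\underline{dim}\,V=r\delta$ for some integer $r$, and as $\delta$ has strictly positive coordinates we get $r\geq 1$. Therefore $Z(V)=r\,Z(\delta)$ lies on the ray $\mathbb{R}_{>0}Z(\delta)$, so $\phi(V)=\phi(\delta)$. Under this normalization, the condition that $Z(X)$ lies to the right (resp. to the left) of the ray $Z(\delta)$ means exactly $\phi(X)<\phi(V)$ (resp. $\phi(X)>\phi(V)$), matching the picture in Figure~1. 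Note also that $V$ itself has defect $\langle\delta,r\delta\rangle=0$, so $V$ is regular, consistent with $\phi(V)=\phi(\delta)$.

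Next I would argue by contradiction in each case. Suppose $X$ is a stable preinjective indecomposable with $\phi(X)<\phi(V)$. Using formula~(\ref{formula}) and $\underline{dim}\,V=r\delta$,
\[
\dim Hom_{Q}(V,X)-\dim Ext_{Q}^{1}(V,X)=\langle r\delta,\underline{dim}\,X\rangle=r\,\langle\delta,\underline{dim}\,X\rangle>0,
\]
because $X$ preinjective forces the defect $\langle\delta,\underline{dim}\,X\rangle>0$ by Theorem~\ref{dlabringel}(ii); hence $Hom_{Q}(V,X)\neq 0$. But $V$ and $X$ are both semistable with $\phi(V)>\phi(X)$, so $Hom_{Q}(V,X)=0$, a contradiction. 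For a stable postprojective indecomposable $X$ with $\phi(X)>\phi(V)$ one runs the dual argument, evaluating the Euler form on the other side: since $\delta$ spans $rad(q_{Q})$ and for a Euclidean quiver this radical agrees with the radical of the symmetrized Euler form, we have $\langle\underline{dim}\,X,\delta\rangle=-\langle\delta,\underline{dim}\,X\rangle$; thus
\[
\dim Hom_{Q}(X,V)-\dim Ext_{Q}^{1}(X,V)=\langle\underline{dim}\,X,r\delta\rangle=-r\,\langle\delta,\underline{dim}\,X\rangle>0
\]
because $X$ postprojective forces $\langle\delta,\underline{dim}\,X\rangle<0$ by Theorem~\ref{dlabringel}(i), so $Hom_{Q}(X,V)\neq 0$, contradicting $\phi(X)>\phi(V)$.

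Once the reduction $\underline{dim}\,V=r\delta$ is in place the argument is essentially forced, so I do not expect a serious obstacle. The only points requiring a little care are the sign bookkeeping for the non-symmetric Euler form --- i.e.\ deducing $\langle x,\delta\rangle=-\langle\delta,x\rangle$ for all classes $x$ from $\delta\in rad(q_{Q})$ and positive semidefiniteness of $q_{Q}$ --- and making sure the orientation convention (right of the ray $\leftrightarrow$ smaller phase) is used consistently with the formulation of the lemma and with Figure~1.
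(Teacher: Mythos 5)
Your proof is correct and follows essentially the same route as the paper: identify $\underline{dim}\,V$ as an imaginary root via Lemma~\ref{achscheisse}, pair with the Dlab--Ringel defect through formula~(\ref{formula}) and the antisymmetry of the Euler form on the radical, and contradict the vanishing of $Hom$ between stables of strictly decreasing phase. The only cosmetic difference is that you carry $\underline{dim}\,V=r\delta$ with $r\geq 1$ (which suffices, since the sign of the pairing is unaffected), whereas the paper invokes the proof of Lemma~\ref{achscheisse2} to pin down $\underline{dim}\,V=\delta$.
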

\begin{proof}
A stable module is a brick. By Lemma \ref{achscheisse} and the proof of Lemma \ref{achscheisse2} we have $\underline{dim}\ V=\delta$. Let us assume there is a stable postprojective indecomposable $X$ with $Z(X)$ left to $Z(V)$ in $\overline{\mathbb{H}}$. By formula (\ref{formula}) and Proposition \ref{dlabringel} we have 
\begin{align*}
-dim\ Hom_{Q}(X,V)+dim\ Ext_{Q}^{1}(X,V)&=-\left\langle \underline{dim}\ X,\underline{dim}\ V\right\rangle\\
&=-\left\langle \underline{dim}\ X,\delta\right\rangle\\
&=\left\langle \delta,\underline{dim}\ X\right\rangle<0. 
\end{align*}
Since $\phi(X)>\phi(V)$ and $X$ and $V$ are stable we have $Hom_{Q}(X,V)=0$. This is a contradiction. The claim for the stable preinjective indecomposable follows by similar arguments.
\end{proof}

Let us consider the Kronecker quiver in the setting above again. All indecomposables in the familiy $(\ref{kronecker})$ are stable. Thus we have infinitely many stable modules $X$ with $\underline{dim}\ X=\delta=(1,1)$ and $Ext_{Q}^{1}(X,X)=k$. The postprojective respectively preinjective modules are the modules with dimension vectors $(m,m+1)$ respectively $(m+1,m)$. These are precisely the exceptional modules of the Kronecker quiver. By Lemma (\ref{achscheisse}) the dimension vectors of all stable modules with self-extensions are multiples of $(1,1)$. We see that stable modules without self-extensions have a unique phase, i.e. all other stable modules have a different phase.\\ 

Since $mod-kQ$ is an algebraic heart in $D^{b}(mod-kQ)$ we can apply the categorical mutation method from section 3 in this case for an Euclidean or more general for an acyclic quiver.  

\begin{prop}
\label{qeuclidean}
Let $Q$ be an Euclidean quiver. Let $Z:K(mod-kQ)\rightarrow\mathbb{C}$ be a rigid central charge on $mod-kQ$. Further we assume there is a stable module $V$ with $Ext_{Q}^{1}(V,V)\neq 0$. Then there are infinitely many stable postprojective respectively preinjective stable indecomposables approaching the ray $Z(\delta)$ from right respectively left in the upper halfplane $\overline{\mathbb{H}}$.
\end{prop}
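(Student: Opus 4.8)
The plan is to obtain this as the Euclidean refinement of Proposition~\ref{prop2}: the mutation method already yields infinitely many stable exceptional objects accumulating near $\arg Z(V)$, and what remains is to identify those exceptional stables as postprojective and preinjective indecomposables (via Lemmas~\ref{achscheisse2} and~\ref{achscheisse3}) and to locate the two limit points precisely at $\arg Z(\delta)$ (via the asymptotics of the Coxeter transformation).

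First I would pin down $\underline{dim}\,V$. A stable module is a brick, so by Lemma~\ref{achscheisse} and $Ext^{1}_{Q}(V,V)\neq 0$ the class $\underline{dim}\,V$ is an imaginary root, i.e.\ $\underline{dim}\,V=r\delta$ for some integer $r\geq 1$. Moreover $V$ is regular, since the postprojective and preinjective indecomposables are exceptional. Hence the inequality $\delta-\underline{dim}\,V\in\mathbb{N}^{n}$ used in the proof of Lemma~\ref{achscheisse2} applies, and together with $\underline{dim}\,V=r\delta$ and the positivity of $\delta$ it forces $r=1$. Thus $\underline{dim}\,V=\delta$, the central charge $Z(V)$ lies on the ray $\mathbb{R}_{>0}Z(\delta)$, and $\arg Z(V)=\arg Z(\delta)$.

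Next I would invoke the mutation method. Since $Q$ is acyclic, $mod-kQ$ is an algebraic heart of $D^{b}(mod-kQ)$ that can be tilted indefinitely; as $Q$ has no loops the heart $mod-kQ$ is rigid, and one checks that every heart obtained from it by finitely many simple tilts is again rigid (in the mutation process one only ever tilts at the left-most simple, which by rigidity has no self-extensions, so $V$ can never become a simple). Thus Proposition~\ref{prop2} applies and produces infinitely many stable objects, all but finitely many of those with phase in a one-sided neighbourhood of $\arg Z(\delta)$ (on either side) being exceptional. Hence there are infinitely many pairwise non-isomorphic stable modules $X$ with $Ext^{1}_{Q}(X,X)=0$ and $\phi(X)<\arg Z(\delta)$, and infinitely many such with $\phi(X)>\arg Z(\delta)$; the strict inequalities hold because $Z$ is rigid and $V$ is a non-exceptional stable of phase $\arg Z(\delta)$. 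Each such $X$ is an exceptional module, and by Lemma~\ref{achscheisse2} only finitely many exceptional modules are regular; discarding those indices, every remaining $X$ with $\phi(X)<\arg Z(\delta)$ is postprojective or preinjective and lies strictly to the right of the ray $Z(\delta)$, so Lemma~\ref{achscheisse3} excludes preinjective and $X$ is postprojective; symmetrically the ones with $\phi(X)>\arg Z(\delta)$ are preinjective. This already gives infinitely many stable postprojective, respectively preinjective, indecomposables. To see that their central charges approach $Z(\delta)$, write such a postprojective as $\tau^{-a}P(i)$; since there are infinitely many of them and only finitely many vertices, some vertex $i$ occurs with $a\to\infty$. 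For a Euclidean quiver the Coxeter transformation fixes $\delta$, and $\tfrac{1}{a}\Phi^{-a}(\underline{dim}\,P(i))$ converges to $c_{i}\delta$ with $c_{i}>0$ — here the Dlab--Ringel defect $\langle\delta,\underline{dim}\,P(i)\rangle<0$ from Theorem~\ref{dlabringel} enters — so $Z(\tau^{-a}P(i))/a\to c_{i}Z(\delta)$ and $\phi(\tau^{-a}P(i))\to\arg Z(\delta)$; combined with $\phi(\tau^{-a}P(i))<\arg Z(\delta)$ this says the stable postprojectives approach the ray $Z(\delta)$ from the right. The same argument with $\tau^{a}I(i)$ and the defect $>0$ handles the preinjectives and the left.

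The step I expect to be the main obstacle is this last one: Proposition~\ref{prop2} by itself only guarantees that the accumulation occurs near $\arg Z(V)$, and pinning the limit points down exactly at $\arg Z(\delta)$ requires the precise growth law for $\Phi^{-a}(\underline{dim}\,P(i))$ — namely that, up to a bounded eventually periodic term, it equals a positive multiple of $a\delta$ — together with Lemma~\ref{achscheisse3} to determine the side from which the ray is approached. A secondary point one should verify is that every heart occurring in the mutation process is rigid, so that Proposition~\ref{prop2} genuinely applies to $\mathcal{A}=mod-kQ$; for acyclic $Q$ this is implicit in the framework of \cite{18}, since tilting a loop-free heart at the (necessarily exceptional) left-most simple produces again a loop-free heart.
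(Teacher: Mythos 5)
Your proposal is correct and follows the same skeleton as the paper's proof --- apply Proposition~\ref{prop2} to get infinitely many stable exceptional objects on either side of $\arg Z(V)=\arg Z(\delta)$, discard the finitely many regular exceptionals via Lemma~\ref{achscheisse2}, and use the defect argument of Lemma~\ref{achscheisse3} to conclude that the ones on the right are postprojective and the ones on the left preinjective --- but it diverges in the one step where the limit points are identified. The paper simply cites Lemma~3.15 of \cite{210}, which says that for a Euclidean quiver the phases of infinitely many stables approach a \emph{unique} limit point; since Proposition~\ref{prop2} places one limit point on each side of $\arg Z(\delta)$, both must equal $\arg Z(\delta)$. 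You instead derive the limit directly from the Coxeter dynamics: $\Phi$ fixes $\delta$, the defect formula gives $\underline{dim}\,\tau^{-a}P(i)= -a\,c\,\langle\delta,\underline{dim}\,P(i)\rangle\,\delta + O(1)$, so $\phi(\tau^{-a}P(i))\to\arg Z(\delta)$, and an infinite family of stable postprojectives must contain $a\to\infty$. This is a valid and more self-contained route (it is the Euclidean analogue of what the paper later does for wild quivers via Theorem~\ref{delapena2} and Corollary~\ref{centralcharge}), at the cost of having to justify the linear growth law, which you correctly flag as the delicate point. Two small remarks: your inline justification that all tilted hearts are rigid ("the left-most simple has no self-extensions by rigidity") is circular as stated --- the fact that simple tilts of $mod\text{-}kQ$ preserve rigidity is nontrivial and is exactly what the paper imports as Theorem~5.7 of \cite{70}, so you should cite it rather than argue it; and your preliminary reduction $\underline{dim}\,V=\delta$ (via Lemma~\ref{achscheisse} and the Crawley-Boevey inequality $\delta-\underline{dim}\,V\in\mathbb{N}^{n}$) is exactly the step the paper performs inside the proof of Lemma~\ref{achscheisse3}, so no discrepancy there.
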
                  
\begin{proof}
By Theorem 5.7 in \cite{70} all hearts obtained by a finite sequence of simple tilts from $mod-kQ$ in $D^{b}(mod-kQ)$ are algebraic and rigid. By Proposition \ref{prop2} and Lemma \ref{achscheisse} there are infinitely many stable modules whose central charges lie on the left/right of the ray $Z(\delta)$ approaching a limit point. By Lemma 3.15 in \cite{210} if there are infinitely many stable modules for a Euclidean quiver their phases approach an unique limit point. There are only finitely many exceptional regular modules by Lemma \ref{achscheisse2} and thus we have infinitely many stable preinjective/postprojective indecomposables left/right to the ray $Z(\delta)$ by Lemma \ref{achscheisse3}.\end{proof}

\begin{rem}
By Lemma \ref{achscheisse} the conditions in Proposition $\ref{qeuclidean}$ on the central charges can be checked by calculating the real and imaginary roots. Note that the phases of stable modules are not dense in this case.
\end{rem}

Let us check Proposition \ref{qeuclidean} in the case of the Kronecker quiver: The simple $S_2=0\rightrightarrows k$ is projective and the simple $S_1=k\rightrightarrows 0$ is injective. For $\phi(S_2)>\phi(S_1)$ there can be no stable module with self-extensions. Indeed, in this case by the mutation method from section 2 the only stable modules are $S_1$ and $S_2$ since $Ext^{1}_{Q}(S_2,S_1)=0$ (see Lemma 8.2.1 in \cite{28}).\\

Given a quiver $Q$, let $Stab(Q)$ be the space of locally-finite stability conditions on $D^{b}(mod-kQ)$.

\begin{prop}
\label{prop3}
Let $Q$ be the Kronecker quiver (\ref{kroneckerquiver}) and let the heart $\mathcal{A}=\mathcal{P}((0,1])$ of a stability condition $\sigma=(Z,\mathcal{P})\in Stab(Q)$ be reachable by a finite sequence of simple tilts from the heart $mod-kQ$. Let the two simple objects of $\mathcal{A}$ have different phases. If there is a stable object $A\in \mathcal{P}(\phi)$ for $\phi\in (0,1]$ with $Ext_{\mathcal{A}}^{1}(A,A)\neq 0$, then $A=A'[i]$ where $A'$ is a module in the $\mathbb{P}_{k}^{1}$-family (\ref{kronecker}), $i\in\mathbb{Z}$ and there are infinitely many stable objects approaching the ray $Z(A)$ from the left respectively right. Further, all or almost all of the stable objects approaching the ray $Z(A)$ from the left/right are of the form $E[j]$ where $E$ is a preinjective/postprojective indecomposable in $mod-kQ$ and $j\in\mathbb{Z}$.  
\end{prop}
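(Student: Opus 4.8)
The plan is to identify the object $A$ by a root computation, deduce the existence and location of the two accumulation points from Proposition \ref{prop2}, and then describe the stable objects near the limit points via the structure of the exceptional objects of $D^{b}(mod-kQ)$ together with the Dlab--Ringel defect. The step I expect to cost the most is not any of these three in isolation but the translation of the hypothesis ``the two simple objects of $\mathcal{A}$ have different phases'' into the statement that $Z|_{\mathcal{A}}$ is a \emph{rigid} central charge; this is exactly what one needs in order to apply Proposition \ref{prop2}, and it is where the arithmetic of the real and imaginary roots of the Kronecker quiver does the work. A secondary nuisance, in the last step, is the bookkeeping of the cohomological shifts once $\mathcal{A}$ is no longer literally $mod-kQ$.

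\emph{Identifying $A$.} First I would note that the hypothesis on the two simples of $\mathcal{A}$ says precisely that $Z$ is non-degenerate as an $\mathbb{R}$-linear map on $K(\mathcal{A})=\mathbb{Z}Q_{0}$: two nonzero classes lying in $\overline{\mathbb{H}}$ and generating $K(\mathcal{A})$ have different phases if and only if they are $\mathbb{R}$-linearly independent, and the classes of the simples of $\mathcal{A}$ generate $K(\mathcal{A})$. Since $A$ is stable it is a brick in $\mathcal{A}$ and indecomposable in $D^{b}(mod-kQ)$, so, $kQ$ being hereditary, $A=M[i]$ for an indecomposable $kQ$-module $M$ and some $i\in\mathbb{Z}$, with $M$ a brick and $Ext^{1}_{Q}(M,M)=Hom_{D^{b}}(A,A[1])=Ext^{1}_{\mathcal{A}}(A,A)\neq 0$. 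As the Tits form of the Kronecker quiver is $(x_{1}-x_{2})^{2}$, Lemma \ref{achscheisse} gives that $\underline{dim}\ M$ is an imaginary root, so $\underline{dim}\ M=r\delta$ for some $r\geq 1$; but the indecomposable modules of dimension vector $r\delta$ with $r\geq 2$ have endomorphism ring $k[t]/(t^{r})$ and are not bricks, so $r=1$ and $M$ lies in the $\mathbb{P}^{1}_{k}$-family (\ref{kronecker}). This gives $A=A'[i]$, and in particular $[A]=(-1)^{i}\delta$ and $Z(A)\in\mathbb{R}\,Z(\delta)$.

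\emph{Infinitely many stables; accumulation.} By Proposition 6.1 in \cite{200} every heart of a bounded t-structure in $D^{b}(mod-kQ)$ is algebraic, and by Theorem 5.7 in \cite{70} the heart $\mathcal{A}$ together with all hearts obtained from it by finite sequences of simple tilts is algebraic and rigid, so $\mathcal{A}$ can be tilted indefinitely. It remains to see that $Z|_{\mathcal{A}}$ is rigid. By the argument of the previous paragraph a stable object without self-extensions has the form $E[j]$ with $E$ an exceptional $kQ$-module, hence a postprojective or preinjective indecomposable, so its class is $\pm$(positive real root); but no two distinct positive real roots of the Kronecker quiver are proportional, and none is proportional to $\delta$. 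Since $Z$ is non-degenerate, distinct stable objects without self-extensions then have pairwise distinct phases, none coinciding with the phase of a stable object of class $\pm\delta$; thus $Z|_{\mathcal{A}}$ is rigid. Proposition \ref{prop2} now applies with $V=A$ and yields infinitely many stable objects whose phases accumulate at one limit point from the left and one from the right of $\arg Z(A)$, are not dense in the two one-sided intervals adjacent to $\arg Z(A)$, and such that the stable objects with phases in those intervals have no self-extensions. That both limit points equal $\arg Z(\delta)=\arg Z(A)$ follows as in Proposition \ref{qeuclidean} (via Lemma 3.15 in \cite{210}), since the dimension vectors of the postprojective, resp.\ preinjective, indecomposables converge projectively to $\delta$.

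\emph{Module-theoretic shape near the limit points.} Let $X$ be a stable object with phase in one of those two intervals; by Proposition \ref{prop2} it has no self-extensions and, being stable, is indecomposable, so $X=E[j]$ with $E$ an indecomposable $kQ$-module, and $Ext^{1}_{Q}(E,E)=0$ forces $E$ to be exceptional, hence postprojective or preinjective (the only exceptional regular modules being finitely many by Lemma \ref{achscheisse2}, which already accounts for the ``almost all''). Since there are only finitely many postprojective or preinjective indecomposables of bounded dimension vector, for all but finitely many such $X$ the dimension vector of $E$ is large, so that $Z(\underline{dim}\ E)$ is close to the ray $\mathbb{R}_{>0}Z(\delta)$; as $Z(X)=(-1)^{j}Z(\underline{dim}\ E)$ must also lie close to $\mathbb{R}_{>0}Z(A)=\mathbb{R}_{>0}(-1)^{i}Z(\delta)$, one gets $j\equiv i\pmod 2$, i.e.\ $[X]=(-1)^{i}\underline{dim}\ E$. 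The defect estimate of Lemma \ref{achscheisse3} then applies, using $\langle\delta,x\rangle=-\langle x,\delta\rangle$ for a Euclidean quiver (the Coxeter transformation fixes $\delta$): if $\phi(X)>\phi(A)$ the vanishing of $Hom$ between objects of decreasing phase together with formula (\ref{formula}) gives $\langle\delta,\underline{dim}\ E\rangle>0$, whence $E$ is preinjective by Theorem \ref{dlabringel}; symmetrically $\phi(X)<\phi(A)$ forces $E$ postprojective. Hence, up to finitely many exceptions, the stable objects approaching $Z(A)$ from the left are shifts of preinjective indecomposables and those approaching from the right are shifts of postprojective indecomposables. The part of this last step where care is genuinely needed is the reduction to the standard defect computation, i.e.\ fixing the parity of the shift $j$ relative to that of $A$; everything else is a transcription of the Euclidean case treated in Proposition \ref{qeuclidean}.
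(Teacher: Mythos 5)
Your proof is correct and follows essentially the same route as the paper's: identify $A$ as a shift of a module in the $\mathbb{P}^{1}_{k}$-family via the brick/imaginary-root argument, check that the hypothesis on the two simples forces the central charge to be rigid so that Proposition \ref{prop2} applies, and then combine Lemma 3.15 of \cite{210} with the fact that the exceptional Kronecker modules are exactly the postprojective and preinjective ones. You are in fact more explicit than the paper in two places where it is terse, namely the verification of rigidity (via non-proportionality of the real roots and non-degeneracy of $Z$) and the defect computation assigning preinjectives to the left and postprojectives to the right, which the paper merely asserts. The one loose end is that your parity argument only yields $j\equiv i\pmod 2$, whereas the $Hom$-vanishing step needs $j=i$ so that $Hom_{D^{b}}(E[j],A'[i])$ is genuinely $Hom_{Q}(E,A')$ rather than a negative Ext group that vanishes for trivial reasons; this is repaired by noting that the indecomposable objects of a heart obtained by tilting a hereditary heart are concentrated in two consecutive cohomological degrees, so $j\equiv i\pmod 2$ together with $j,i$ lying in a window of length two forces $j=i$.
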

\begin{proof}
The stable objects $E\in\mathcal{P}(\phi)$ for some $\phi\in\mathbb{R}$ are the simple objects and thus indecomposable in the Abelian category $\mathcal{P}(\phi)$. Thus there are no idempotents except $0$ and $id$ in $End_{\mathcal{P}(\phi)}(E)=End_{D^{b}(mod-kQ)}(E)$ and thus the $\sigma$-stable objects are indecomposable in $D^{b}(mod-kQ)$. Since $mod-kQ$ is hereditary $E$ is of the form $E=E'[i]$ for some indecomposable $E'\in mod-kQ$ and $i\in\mathbb{Z}$. In particular, the stable object $A$ is of the form $A'[i]$ with $A'$ indecomposable with $Ext^{1}_{Q}(A',A')\neq 0$ and $Hom_Q(A',A')=k$ by the generalized Schur lemma \cite{260}. Thus $A'$ is a regular brick with $\underline{dim}\ A'=\delta=(1,1)$, i.e. an element of the family (\ref{kronecker}). Note that by the 2. and 3. axiom of Definition \ref{bridgeland} stable objects $E$ and $E[i]$ for $i\in\mathbb{Z}$ do not lie in the same subcategory $\mathcal{P}(\phi)$. By the discussion of the Kronecker quiver above follows that all stable objects in $\mathcal{P}((0,1])$ without self-extensions have a unique phase in $(0,1]$. By Proposition \ref{prop2} we have infinitely many stable objects without self-extensions in $\mathcal{A}$ approaching the ray $Z(A)$ from left and right. From Lemma 3.15 in \cite{210} follows that the phases of stable objects approach a unique limit point from the left and the right. These stable objects are of the form $E'[i]$ for some indecomposable $E'\in mod-kQ$ and $i\in\mathbb{Z}$ with $Ext^{1}_{Q}(E',E')=0$, i.e. $E'$ is a preinjective or postprojective indecomposable. Therefore all or almost all stable objects approaching the ray $Z(E)$ from the left/right must be of the form $E'[i]$ with $E'$ preinjective/postprojective indecomposable. 
\end{proof}

Let us consider $mod-kQ$ for the Kronecker quiver (\ref{kroneckerquiver}) with central charge given by the phases $\phi(S_1)>\phi(S_2)$. If we rotate the central charge a bit counter-clockwise, the stable modules of $mod-kQ$ are stable in the tilt $\mathcal{A}$ of $mod-kQ$ at the simple $S_1$ again. Note that the right-most object is now $S_1[-1]$ where $S_1$ is injective. Proposition \ref{prop3} is more general: It is a statement on hearts obtained from sequences of simple tilts from $mod-kQ$ not necessary induced by a central charge.

\section{Stable representations of wild quivers}

An acyclic quiver $Q$ is \textit{wild} if it is neither of Dynkin nor of Euclidean type. An example is the generalized Kronecker quiver with $l\geq 3$ arrows:
\begin{align}
\label{kl}
\xymatrix{1 \ar@/^1pc/[r]\ar@/_/[r]^{\vdots\ l} & 2}, \hspace{0.5cm}l\geq 3.
\end{align}

We define a central charge by choosing two complex numbers $Z(S_1)=r_1\exp(i\pi\phi_1)$ and $Z(S_2)=r_2\exp(i\pi\phi_2)$ with $0<\phi_2<\phi_1<1$ for the simple modules $S_1, S_2$ associated to the two vertices. By Lemma 3.18 in \cite{210} the phases of the central charges of the positive real roots approach $\arg\ Z\left(\frac{1}{2}\left(l+\sqrt{l^2+4}\right)\right)$ from the left, respectively $\arg\ Z\left(\frac{1}{2}\left(l-\sqrt{l^2+4}\right)\right)$ from the right. Note that the Coxeter transformation of the Kronecker quiver has eigenvectors $\frac{1}{2}\left(l\pm\sqrt{l^2+4}\right)$. The phases of the positive imaginary roots lie dense in the arc between these two phases. In particular, the postprojective and preinjective modules have unique phases. The modules of the $\mathbb{P}_{k}^{l-1}$-family of indecomposable modules of dimension vector $(1,1)$ are stable. These have self-extensions thus there are infinitely many stable postprojective and preinjective modules by Proposition \ref{prop2}. In fact, all postprojectives and preinjectives are stable. Moreover, if we restrict to the field $k=\mathbb{C}$, for any positive root $\alpha$ there is a semistable module $X$ with $\underline{dim}\ X=\alpha$ by Corollary 3.20 in \cite{210}. This implies for any phase $\phi$ of an indecomposable module there is a stable module with this phase since the stable modules are the simple objects in the exact subcategory of semistable modules of phase $\phi$. Therefore there is an arc where the phases of the stable (regular) modules are dense and outside we find the phases of the exceptional modules, i.e. the postprojective and preinjective modules. For stable representations of the generalized Kronecker quiver compare \cite{310}.

\begin{rem}
For the generalized Kronecker quiver \ref{kl} an analogue of Proposition \ref{prop3} holds true.
\end{rem}

As another illustrative example let us consider the following quiver:

$$\xymatrix{1 \ar@<+.8ex>[r]\ar[r]\ar@<-.7ex>[r] & 2\ar[r]&3}$$

This quiver has the generalized Kronecker quiver with three arrows (or more generally with $l\geq 3$ arrows) as a full subquiver. Let us choose the central charge given by three complex numbers $Z(S_1), Z(S_2)$ and $Z(S_3)$ in the upper halfplane $\overline{\mathbb{H}}$ with phases $\phi(S_3)>\phi(S_1)>\phi(S_2)$. With respect to this central charge we have the stable representations induced by the stable representations of the 3-Kronecker quiver with central charge given by $Z(S_1)$ and $Z(S_2)$. Representations supported on the vertices $1,2,3$ or $2,3$ are not stable. Thus the only stable representation not coming from the 3-Kronecker quiver is $S_3$ and this central charge is rigid.\\
We call a module $X$ \textit{sincere} if each component of $\underline{dim}\ X$ is non-zero. Since there are only finitely many non-sincere postprojective or preinjective indecomposables as follows from Theorem \ref{delapena} we have in $mod-kQ$ (where $k$ is a algebraically closed field) infinitely many non-sincere regular stable modules whose phases approach two limit points. Since every regular component contains only finitely many non-sincere modules (see Corollary 2.4 in chapter XVII of \cite{300}) at most finitely many of these stable modules lie in a regular component.\\

If we take the quiver $$\xymatrix{1 \ar@<+.7ex>[r]\ar@<-.7ex>[r] & 2 \ar[r]&3}$$ with subquiver given by the Kronecker quiver and again choose a central charge with $\phi(S_3)>\phi(S_1)>\phi(S_2)$ the infinitely many stable modules induced from the Kronecker quiver approach an unique limit point.\\

If we apply the mutation method of section 3 to a quiver $Q$ the simple objects of the hearts appearing in the mutation algorithm lying in $mod-kQ$ are indecomposable modules in $mod-kQ$. Thus they are postprojective, preinjective or regular.

\begin{prop}
Let $Q$ be an acyclic non-Dynkin quiver with a rigid central charge on $mod-kQ$. If the left-most stable object $S$ of a heart $\mathcal{A}'$ appearing in the mutation method is a postprojective/preinjective/regular indecomposable in $mod-kQ$ and there are except $S$ $l$ further simple objects that are postprojectvie/preinjective/regular in $mod-kQ$, then there are $l$ simple objects in the heart $\mathcal{A}'_{S}$ obtained by a left-tilt at the simple $S$ that are postprojectvie/preinjective/regular in $mod-kQ$. 
\end{prop}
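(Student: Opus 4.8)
The plan is to use the explicit description of the simple objects of the left tilt $\mathcal{A}'_{S}=L_{S}(\mathcal{A}')$ and to track, simple by simple, whether it lies in $mod-kQ$ and which Auslander-Reiten type it has. By the proof of Theorem~\ref{prop} every heart $\mathcal{A}'$ occurring in the mutation method is a left-tilt of $mod-kQ$ at a torsion pair, so its simple objects lie in $mod-kQ$ or in $mod-kQ[-1]$; by Lemma 3.2 in \cite{190} the same is true of $\mathcal{A}'_{S}$, since $S$ is a simple of $\mathcal{A}'$ lying in $mod-kQ$. The simple objects of $\mathcal{A}'_{S}$ are $S[-1]$ together with, for each simple $T\neq S$ of $\mathcal{A}'$, an object $\widetilde{T}$ obtained from $T$ by a universal extension controlled by $a_{T}:=dim\ Ext^{1}_{\mathcal{A}'}(S,T)$, with $\underline{dim}\ \widetilde{T}=\underline{dim}\ T+a_{T}\ \underline{dim}\ S$; if $T$ is a module of $mod-kQ$ the universal extension can be formed inside $mod-kQ$, $0\to T\to\widetilde{T}\to S^{\oplus a_{T}}\to 0$, so $\widetilde{T}$ is again a module. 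Finally $S[-1]$ lies in $mod-kQ[-1]$, hence is never an indecomposable $kQ$-module, and since $S$ is the unique left-most simple one has $Hom_{\mathcal{D}}(S,T)=Hom_{\mathcal{D}}(T,S)=0$ for every other simple $T$ of $\mathcal{A}'$.

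The heart of the argument is that, when $S$ and $T$ have the same postprojective (resp.\ preinjective, resp.\ regular) type, so does $\widetilde{T}$: it is indecomposable, being a simple object of the heart $\mathcal{A}'_{S}$, and it is an extension of copies of $S$ by $T$ inside $mod-kQ$, while the additive subcategory of $mod-kQ$ generated by the postprojective (resp.\ preinjective) indecomposables is closed under extensions. For the postprojective case: in a short exact sequence $0\to A\to B\to C\to 0$ with $A,C$ postprojective, the sum $B''$ of the preinjective and regular summands of $B$ maps to $0$ in $C$ because there are no nonzero homomorphisms from a preinjective or regular module to a postprojective one, so $B''$ embeds into $A$ and hence vanishes; the preinjective case is dual, and the regular case follows in the Euclidean case from additivity of the Dlab-Ringel defect (Theorem~\ref{dlabringel}) and the same separation statements, and must be treated separately in the wild case, where the regular modules need not be extension closed.

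Finally one checks that the remaining simples of $\mathcal{A}'_{S}$, i.e.\ $S[-1]$ and the $\widetilde{T}$ coming from simples $T$ of $\mathcal{A}'$ that are shifts or modules of a different Auslander-Reiten type, do not contribute new simples of the type being counted; combining this with the previous step yields exactly the asserted number $l$. I expect this last bookkeeping step to be the real obstacle: one must use the precise value $a_{T}=dim\ Ext^{1}_{\mathcal{A}'}(S,T)$, the identity $\underline{dim}\ \widetilde{T}=\underline{dim}\ T+a_{T}\ \underline{dim}\ S$, the left-most position of $S$, and the fact that $\widetilde{T}$ is a brick, to determine the Auslander-Reiten type and cohomological degree of each $\widetilde{T}$ — in particular to see how the simple $S$, which becomes the shift $S[-1]$, is compensated by one of the other simples changing type or moving back into $mod-kQ$, so that the total count stays $l$.
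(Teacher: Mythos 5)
Your setup coincides with the paper's: by Proposition 5.2 of \cite{70} the simple objects of $\mathcal{A}'_{S}$ are $S[-1]$ together with the universal extensions $\widetilde{T}$ of the remaining simples $T$, with $[\widetilde{T}]=[T]+a_{T}[S]$ and $a_{T}=\dim Ext^{1}(S,T)\geq 0$, and for $T\in mod-kQ$ this is an extension of modules, so $\widetilde{T}\in mod-kQ$. Where you diverge is in how you read off the Auslander--Reiten type of $\widetilde{T}$, and this is where your argument has a genuine hole: you reduce everything to extension-closure of the three classes, which does work for the postprojective and preinjective classes, but --- as you yourself note --- is not available for the regular modules of a wild quiver, and you then leave that case untreated. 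The observation you are missing is that extension-closure is not needed at all: $\widetilde{T}$ is indecomposable (it is a simple object of the heart $\mathcal{A}'_{S}$), and Theorems \ref{dlabringel} and \ref{delapena} determine the type of an indecomposable module purely from the signs of the linear functionals $\left\langle\delta,-\right\rangle$ (Euclidean case), resp. $\left\langle y^{-},-\right\rangle$ and $\left\langle -,y^{+}\right\rangle$ (wild case), evaluated on its class. Since $[\widetilde{T}]=[T]+a_{T}[S]$ with $a_{T}\geq 0$ and $S$, $T$ of the same type, the relevant functional(s) have the same sign on $[\widetilde{T}]$ as on $[S]$ and $[T]$, and all three cases --- in particular the wild regular one, where both functionals are strictly positive --- close at once. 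This one-line linearity argument is the entirety of the paper's proof.

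The second point, which you flag as ``the real obstacle'' --- verifying that the count is exactly $l$, i.e. that $S[-1]$ and the $\widetilde{T}$ coming from simples of a different type or from shifted simples do not acquire the type being counted --- is indeed not settled by your argument, but it is not settled by the paper's proof either: the paper only shows that each of the $l$ type-matching simples of $\mathcal{A}'$ yields a type-matching simple of $\mathcal{A}'_{S}$ (and that $S[-1]$, being a shift, is not a module of any type). Your worry is substantive if the statement is read as an exact count: for $S$ postprojective and $T$ regular, say, one has $\left\langle y^{-},[T]+a_{T}[S]\right\rangle=\left\langle y^{-},[T]\right\rangle+a_{T}\left\langle y^{-},[S]\right\rangle$ with terms of opposite sign, so the type of $\widetilde{T}$ is not determined by this argument alone. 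So: repair the wild regular case as above, and either prove the proposition in the ``at least $l$'' form --- which is what both your completed argument and the paper's actually deliver --- or supply the additional argument excluding type changes.
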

\begin{proof}
By the formulas in Proposition 5.2 in \cite{70} we can calculate the new simple objects of $\mathcal{A}'_{S}$ from the old simple objects. For the classes of the $l$ simple objects $S_1,\ldots,S_l$ we have $[S'_j]=[S_j]+m[S]$ where $m=\dim\ Ext^1(S,S_j)$ and $S'_j$ is the new simple object calculated from the simple $S_j$. Since $m\geq 0$ the result follows from Theorem \ref{dlabringel} and Theorem \ref{delapena}.  
\end{proof}

Let $Q$ be a wild quiver with $n$ vertices. Let $\Phi:\mathbb{Z}Q_0\rightarrow \mathbb{Z}Q_0$ be the Coxeter transformation of $Q$ defined in the last section. We consider $\Phi$ as an endomorphism of $\mathbb{R}^{n}=\mathbb{Z}Q_0\otimes\mathbb{R}$. The \textit{spectrum} $Spec(\Phi)$ is the set of eigenvalues of $\Phi$ and the \textit{spectral radius} $\rho(\Phi)$ is the positive real number $$\rho(\Phi)=max\left\{|\lambda|\ |\lambda\in\ Spec(\Phi)\right\}.$$ $\rho(\Phi)$ is called the \textit{growth number} of $Q$ due to the statement of Theorem \ref{delapena}. 

\begin{thm}\cite{270}
Let $Q$ be a wild quiver with Coxeter transformation $\Phi$. Then
\begin{enumerate}
\item[(i)] $\rho(\Phi)>1$ and $\rho(\Phi)$ is an eigenvalue of $\Phi$ of multiplicity one.
\item[(ii)] For any other eigenvalue $\mu\in\ Spec(\Phi)$ we have $|\mu|<\rho(\Phi)$.
\item[(iii)] There is a strictly positive\footnote{A vector $y\in \mathbb{R}^{n}$ is \textit{strictly positive}, if all its coordinates are positive.} eigenvector $y^{+}$ of $\Phi$ with $\Phi y^{+}=\rho(\Phi)y^{+}$ and a strictly positive eigenvector $y^{-}$ of $\Phi^{-1}$ with $\Phi y^{-}=\rho(\Phi)^{-1}y^{-}$.
\end{enumerate}
\end{thm}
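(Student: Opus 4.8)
The plan is to treat this as a Perron--Frobenius-type statement: I would realise $\Phi$ and $\Phi^{-1}$ as linear operators preserving a proper convex cone sitting inside the positive orthant of $\mathbb{R}^{n}=\mathbb{Z}Q_{0}\otimes\mathbb{R}$, and then invoke the Krein--Rutman/Perron--Frobenius theory of cone-preserving maps. First I would record the elementary symmetries of $\Phi$. From $\Phi=-(C^{-1})^{T}C$ one checks $\Phi^{-1}=C^{-1}\Phi^{T}C$, so $Spec(\Phi)=Spec(\Phi^{-1})$; combined with the fact that $\Phi$ is real and an isometry of the symmetric bilinear form associated with $q_{Q}$, the spectrum is stable under $\lambda\mapsto\bar\lambda$ and $\lambda\mapsto\lambda^{-1}$. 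In particular $\rho(\Phi)=\rho(\Phi^{-1})=:\rho$, and it suffices to produce a strictly positive $\Phi^{-1}$-eigenvector for $\rho$ (this will be $y^{-}$, since $\Phi^{-1}y=\rho y$ is the same as $\Phi y=\rho^{-1}y$) and, by the dual argument on the opposite quiver $Q^{\mathrm{op}}$, a strictly positive $\Phi$-eigenvector for $\rho$ (this will be $y^{+}$).

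For the cone, I would use the structure of the postprojective component of $mod-kQ$ for wild $Q$: it is directed, of translation-quiver shape $\mathbb{N}Q^{\mathrm{op}}$, and all but finitely many of its modules are sincere, so the positive integral vectors $\Phi^{-m}\,\underline{dim}\ P(i)=\underline{dim}\ \tau^{-m}P(i)$ ($m\geq 0$, $i\in Q_{0}$) are strictly positive once $m$ is large. Fixing such an $m_{0}$, set
\[
K=\overline{\mathbb{R}_{\geq0}\text{-span}\bigl\{\Phi^{-m}\,\underline{dim}\ P(i)\ :\ m\geq m_{0},\ i\in Q_{0}\bigr\}}\ \subseteq\ \mathbb{R}^{n}_{\geq0}.
\]
Then $K$ is a proper cone: it is pointed because it lies in the orthant, and it has nonempty interior because it contains sincere vectors; moreover $\Phi^{-1}(K)\subseteq K$ by the shift $m\mapsto m+1$. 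Krein--Rutman then yields an eigenvector $y^{-}\in K$ of $\Phi^{-1}$ whose eigenvalue is the spectral radius $\rho$. To get the sharp conclusions (i)--(iii) I would show that $\Phi^{-1}$ is \emph{primitive} on $K$, i.e. some power of $\Phi^{-1}$ sends $K\setminus\{0\}$ into the interior of $K$; this is where connectedness of $Q$ enters, via the irreducibility and aperiodicity of the multiplicity matrix encoding the shape $\mathbb{N}Q^{\mathrm{op}}$ of the postprojective component. Primitivity pushes $y^{-}$ into the interior of $K$ (hence strictly positive), makes $\rho$ a simple eigenvalue of $\Phi^{-1}$ and of $\Phi$, and forces $|\mu|<\rho$ for every other $\mu\in Spec(\Phi)$. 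Running the same construction on the preinjective component — i.e. letting $\Phi$ act on $\underline{dim}\ \tau^{m}I(i)=\Phi^{m}\,\underline{dim}\ I(i)$ — produces $y^{+}$ with $\Phi y^{+}=\rho y^{+}$.

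The remaining point, $\rho>1$, is the one I expect to be the main obstacle, since the cone argument above never really uses wildness (for a Euclidean quiver $\Phi^{-1}$ still preserves such a cone, but there $\rho=1$, with a nontrivial Jordan block). Here I would argue by contradiction: if $\rho=1$ then every eigenvalue of $\Phi$ lies on the unit circle, so the entries of $\Phi^{-m}$, and with them the dimension vectors of the postprojective modules, grow at most polynomially in $m$; this makes the number of positive roots of bounded length grow polynomially, which by the trichotomy for hereditary algebras occurs only in the Dynkin and Euclidean cases, contradicting wildness. (Equivalently one invokes the indefiniteness of $q_{Q}$ for wild $Q$ together with the absence of a nontrivial unipotent part of $\Phi$ on the relevant $\Phi$-invariant subspace.) I would write this polynomial-versus-exponential growth step out in detail, the rest being routine once the primitivity of the cone map is in place.
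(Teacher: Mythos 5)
First, a point of reference: the paper does not prove this theorem at all --- it is imported verbatim from Ringel \cite{270} (see also de la Pena--Takane \cite{280}) --- so there is no internal argument to compare yours with, and your proposal has to stand on its own. Its general flavour (build a $\Phi^{-1}$-invariant proper cone out of the postprojective component and invoke Perron--Frobenius/Krein--Rutman) is indeed close in spirit to the literature, and several steps are sound: $\Phi^{-1}=C^{-1}\Phi^{T}C$ does give the $\lambda\mapsto\lambda^{-1}$ symmetry of the spectrum; the vectors $\Phi^{-m}\,\underline{dim}\ P(i)$ are eventually strictly positive and span $\mathbb{R}^{n}$, so $K$ is a proper cone with $\Phi^{-1}(K)\subseteq K$; and Vandergraft's finite-dimensional Krein--Rutman theorem then produces a nonnegative eigenvector for the eigenvalue $\rho(\Phi^{-1})=\rho(\Phi)$.

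However, there are two genuine gaps. The first is the primitivity claim, which is where all of the content of (i), (ii) and the strict positivity in (iii) actually sits. You need some power of $\Phi^{-1}$ to send $K\setminus\{0\}$ into the interior of $K$, and for your cone this is not merely unproved but in doubt: the Perron ray $\mathbb{R}_{\geq 0}y^{-}$ is by construction a limit of the generating rays $\mathbb{R}_{\geq 0}\Phi^{-m}\underline{dim}\ P(i)$, and a limit of extreme rays of a closed conical hull generally lies on its \emph{boundary} (compare rays in the plane with angles decreasing to a limit angle: the limit ray is a face of the hull). If $y^{-}\in\partial K$, the smallest face containing it is a proper $\Phi^{-1}$-invariant face, so $\Phi^{-1}$ is not even $K$-irreducible and no version of primitive Perron--Frobenius yields simplicity of $\rho$, the strict inequality $|\mu|<\rho$, or $y^{-}>0$; the appeal to ``irreducibility and aperiodicity of the multiplicity matrix of $\mathbb{N}Q^{\mathrm{op}}$'' does not engage with this. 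The second gap is the step $\rho>1$. Counting positive roots of bounded norm does not separate wild from tame: for a wild quiver the set $\{q_{Q}\leq 0\}$ is a full-dimensional cone, so the number of positive roots of norm at most $R$ grows like a polynomial in $R$ as well, and no contradiction with wildness arises from ``polynomial growth''. What you actually need is that $|\underline{dim}\ \tau^{-m}P(i)|$ grows \emph{exponentially} in $m$ for wild $Q$ --- which is essentially the assertion $\rho>1$ itself, so the argument is circular as sketched. The proofs in \cite{270,280} close exactly these two holes with representation-theoretic input (the identity $\left\langle x,y\right\rangle=-\left\langle y,\Phi x\right\rangle$ and the induced relation between $Spec(\Phi)$ and the symmetrized Euler form, respectively growth estimates for $\dim Hom_{Q}(\tau^{-m}X,Y)$ over wild hereditary algebras), and I do not see how to avoid some such input in your scheme.
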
             

Note that the vectors $y^+$ and $y^-$ are linearly independent. We have the following analogue of Theorem \ref{dlabringel} in the wild case:

\begin{thm}\cite{280}
\label{delapena}
Let $Q$ be a wild quiver. Let $X\in mod-kQ$ be indecomposable. Then $X$ is
\begin{enumerate}
\item[(i)] postprojective if and only if $\left\langle y^{-}, \underline{dim}\ X\right\rangle<0$. Moreover, if $X$ is not postprojective, then $\left\langle y^{-}, \underline{dim}\ X\right\rangle>0$.
\item[(ii)] preinjective if and only if $\left\langle \underline{dim}\ X,y^{+}\right\rangle<0$. Moreover, if $X$ is not preinjective, then $\left\langle \underline{dim}\ X,y^{+}\right\rangle>0$.
\item[(iii)] regular if and only if $\left\langle y^{-}, \underline{dim}\ X\right\rangle>0$ and $\left\langle \underline{dim}\ X,y^{+}\right\rangle>0$.
\end{enumerate}
\end{thm}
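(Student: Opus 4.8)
The plan is to prove this by reproducing, in the wild case, the Dlab--Ringel mechanism behind Theorem \ref{dlabringel}, replacing the single defect $\langle\delta,-\rangle$ by the two linear forms $\partial^{-}(x):=\langle y^{-},x\rangle$ and $\partial^{+}(x):=\langle x,y^{+}\rangle$ on $\mathbb{Z}Q_{0}$. The backbone is a pair of scaling rules. Since the Coxeter transformation preserves the Euler form (so $\langle\Phi^{-1}a,b\rangle=\langle a,\Phi b\rangle$) and, by the theorem attributed to \cite{270}, $\Phi y^{+}=\rho(\Phi)y^{+}$, $\Phi y^{-}=\rho(\Phi)^{-1}y^{-}$, one gets $\partial^{-}(\Phi x)=\rho(\Phi)\,\partial^{-}(x)$ and $\partial^{+}(\Phi x)=\rho(\Phi)^{-1}\partial^{+}(x)$. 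Because $\underline{dim}\,\tau^{\pm m}M=\Phi^{\pm m}\underline{dim}\,M$ whenever the intermediate modules avoid being projective, resp. injective --- which holds along every postprojective, preinjective, and regular $\tau$-orbit over a non-Dynkin $Q$ --- these rules transport the whole question to the projective and injective modules.

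First I would nail the "easy" signs. Evaluating the Euler form on the simples gives $\langle\underline{dim}\,P(i),z\rangle=z_{i}$ and $\langle z,\underline{dim}\,I(j)\rangle=z_{j}$ for all $z$ (the $Ext$-terms in (\ref{formula}) vanish for a projective source, resp. an injective target). Hence $\partial^{-}(\underline{dim}\,I(j))=(y^{-})_{j}>0$ and $\partial^{+}(\underline{dim}\,P(i))=(y^{+})_{i}>0$; using $\underline{dim}\,P(i)=-\Phi^{-1}\underline{dim}\,I(i)$ together with the scaling rule, $\partial^{-}(\underline{dim}\,P(i))=-\rho(\Phi)^{-1}(y^{-})_{i}<0$ and dually $\partial^{+}(\underline{dim}\,I(j))=-\rho(\Phi)^{-1}(y^{+})_{j}<0$, all strict because $y^{\pm}$ are strictly positive. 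Applying the scaling rule $m$ times settles the two non-regular cases at once: $\partial^{-}<0<\partial^{+}$ on every postprojective module, and $\partial^{+}<0<\partial^{-}$ on every preinjective module.

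For a regular indecomposable $X$ I would pass to the decomposition $\mathbb{R}^{n}=\mathbb{R}y^{+}\oplus W\oplus\mathbb{R}y^{-}$, with $W$ the $\Phi$-invariant complement spanned by the eigenspaces for eigenvalues of modulus strictly between $\rho(\Phi)^{-1}$ and $\rho(\Phi)$ (both extremes are simple eigenvalues of $\Phi$, by \cite{270} applied to $\Phi$ and to $\Phi^{-1}$). A one-line eigenvalue argument --- $\partial^{-}$ restricted to $W$, if nonzero, would be a left eigenvector of $\Phi|_{W}$ for the eigenvalue $\rho(\Phi)\notin Spec(\Phi|_{W})$, and $\partial^{-}(y^{-})=\langle y^{-},y^{-}\rangle=0$ --- shows $\partial^{-}(v)=\alpha(v)\langle y^{-},y^{+}\rangle$ depends only on the $y^{+}$-coordinate $\alpha(v)$, and dually $\partial^{+}(v)=\beta(v)\langle y^{-},y^{+}\rangle$ with $\beta(v)$ the $y^{-}$-coordinate. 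Since the $\underline{dim}\,P(i)$ span $\mathbb{R}^{n}$, one of them has $\beta\neq 0$; it is $\geq 0$, being the limit of the nonnegative vectors $\rho(\Phi)^{-m}\underline{dim}\,\tau^{-m}P(i)$ (here $\|\Phi^{-m}w\|=o(\rho(\Phi)^{m})$ for $w\in W$, by \cite{270}(ii)), so together with $\partial^{+}(\underline{dim}\,P(i))=(y^{+})_{i}>0$ this forces $\langle y^{-},y^{+}\rangle>0$. For regular $X$ all $\tau^{m}X$ ($m\in\mathbb{Z}$) are nonzero regular modules, so $\rho(\Phi)^{-m}\underline{dim}\,\tau^{m}X\to\alpha(\underline{dim}\,X)\,y^{+}$ and $\rho(\Phi)^{-m}\underline{dim}\,\tau^{-m}X\to\beta(\underline{dim}\,X)\,y^{-}$ are limits of nonnegative vectors, giving $\partial^{-}(\underline{dim}\,X)\geq 0$ and $\partial^{+}(\underline{dim}\,X)\geq 0$.

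The remaining, and genuinely hard, point is strictness: a regular module cannot have $\alpha(\underline{dim}\,X)=0$ or $\beta(\underline{dim}\,X)=0$, i.e. $\underline{dim}\,X$ is not swallowed by $W\oplus\mathbb{R}y^{-}$ or $W\oplus\mathbb{R}y^{+}$. Assuming $\alpha(\underline{dim}\,X)=0$, write $\underline{dim}\,X=w+\beta y^{-}$; the vectors $\underline{dim}\,\tau^{m}X=\Phi^{m}w+\beta\rho(\Phi)^{-m}y^{-}$ are pairwise distinct in $m$ (equality of two would split, after projecting to $W$ and to $\mathbb{R}y^{-}$, into $w=\Phi^{m'-m}w$ and $\rho(\Phi)^{-m}=\rho(\Phi)^{-m'}$), so $\{\underline{dim}\,\tau^{m}X\}_{m\geq 0}$ is an infinite, hence unbounded, set of nonnegative integer vectors, which forces $\|\Phi^{m}w\|\to\infty$; then, normalizing and passing to subsequences, one extracts (via a Ces\`aro average of the nonnegative unit limit vectors) a nonzero nonnegative eigenvector of $\Phi$ for an eigenvalue of modulus in $(1,\rho(\Phi))$, contradicting the uniqueness part of Perron--Frobenius for $\Phi$ (the only nonnegative eigendirection is $\mathbb{R}_{\geq 0}y^{+}$); the case $\beta(\underline{dim}\,X)=0$ is symmetric. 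With strictness in hand the theorem closes: the forward implications and the two "moreover" clauses are exactly the sign statements above ("not postprojective" means preinjective or regular, both with $\partial^{-}>0$; dually for $\partial^{+}$), and since postprojective, preinjective and regular indecomposables partition the indecomposable $kQ$-modules with mutually exclusive sign patterns for $(\partial^{-},\partial^{+})$, each of (i)--(iii) becomes an "if and only if". I expect the decisive obstacle to be precisely this strictness step --- making rigorous that a regular module's dimension vector genuinely grows in both the $y^{+}$ and the $y^{-}$ direction rather than being absorbed into the central part $W$.
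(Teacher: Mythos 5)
The paper does not prove this theorem: it is imported verbatim from de la Pe\~na--Takane \cite{280} (see also \cite{290}), so there is no in-paper argument to compare yours with; I can only assess your reconstruction on its own terms. Most of it is sound and is essentially the standard route. The adjointness $\left\langle \Phi^{-1}a,b\right\rangle=\left\langle a,\Phi b\right\rangle$, the scaling rules for $\partial^{\pm}$, the computations $\left\langle \underline{dim}\,P(i),z\right\rangle=z_{i}$ and $\left\langle z,\underline{dim}\,I(j)\right\rangle=z_{j}$, the resulting sign patterns on postprojectives and preinjectives, the decomposition $\mathbb{R}^{n}=\mathbb{R}y^{+}\oplus W\oplus\mathbb{R}y^{-}$ (you should say \emph{generalized} eigenspaces, and you do need $\rho(\Phi^{-1})=\rho(\Phi)$, which follows from $\Phi$ being conjugate to $(\Phi^{-1})^{T}$), the identity $\partial^{-}=\alpha(\cdot)\left\langle y^{-},y^{+}\right\rangle$, the positivity of $\left\langle y^{-},y^{+}\right\rangle$, and the weak inequalities $\partial^{\pm}(\underline{dim}\,X)\geq 0$ for regular $X$ are all correct. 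Note that these weak inequalities already suffice for the biconditionals in (i) and (ii) and for the ``if'' direction of (iii), since $\geq 0$ is enough to contradict $<0$.

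The genuine gap is exactly where you predicted it: strict positivity of $\partial^{\pm}$ on regular modules (equivalently the ``moreover'' clauses and the forward direction of (iii)), and your sketch for it does not close. Three specific failures. First, your pairwise-distinctness claim for $\{\underline{dim}\,\tau^{m}X\}$ breaks down when $\beta(\underline{dim}\,X)=0$: then equality of two terms only yields $w=\Phi^{m'-m}w$, and you have not excluded a $\Phi$-periodic positive vector lying in $W$ (Coxeter transformations of wild quivers can have eigenvalues on the unit circle, so boundedness of $\Phi^{m}w$ is a live possibility). Second, the Ces\`aro average of the normalized orbit vectors need not be an eigenvector of $\Phi$; when the dominant eigenvalues of $\Phi$ on the cyclic subspace of $w$ are non-real there is no real eigenvector in that subspace at all, and what a cone argument (Krein--Rutman on the closed convex cone generated by the limit set) actually produces is a nonnegative eigenvector for \emph{some} real eigenvalue in $[1,\rho(\Phi))$ --- which brings you to the third and decisive failure: the asserted contradiction rests on ``the only nonnegative eigendirection of $\Phi$ is $\mathbb{R}_{\geq 0}y^{+}$,'' justified by ``Perron--Frobenius for $\Phi$.'' But $\Phi=-(C^{-1})^{T}C$ is not a nonnegative matrix, Perron--Frobenius uniqueness does not apply to it, and the cited spectral theorem asserts only the existence of $y^{\pm}$ and simplicity of $\rho^{\pm 1}$, not the absence of other nonnegative eigenvectors. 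This is why the published proofs in \cite{280,290} do not argue by linear algebra and positivity of dimension vectors alone: they inject representation-theoretic input specific to wild hereditary algebras (exponential growth of $\underline{dim}\,\tau^{m}X$ for regular $X$, resting on Hom-nonvanishing results of Baer/Kerner between regular modules in a $\tau$-orbit). Without some such input, your argument cannot rule out a regular indecomposable whose dimension vector is absorbed into $W\oplus\mathbb{R}y^{-}$.
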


An important consequence is the following Theorem that allows to describe the long-term behaviour of the discrete dynamical system defined by the Coxeter transformation $\Phi$ with initial conditions given by the dimension vectors of indecomposable modules.

\begin{thm}\cite{280, 290}
\label{delapena2}
Let $Q$ be a wild quiver and let $X\in mod-kQ$ be an indecomposable module. Then
\begin{enumerate}
\item[(i)] If $X$ is postprojective or regular, then $$\lim_{m\rightarrow\infty}\frac{1}{\rho(\Phi)}\underline{dim}\ \tau^{-m}X=\lambda^{-}_{X}y^{-}$$ for some $\lambda^{-}_{X}>0$.
\item[(ii)] If $X$ is preinjective or regular, then $$\lim_{m\rightarrow\infty}\frac{1}{\rho(\Phi)}\underline{dim}\ \tau^{m}X=\lambda^{+}_{X}y^{+}$$ for some $\lambda^{+}_{X}>0$.
\end{enumerate}
\end{thm}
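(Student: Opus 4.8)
The plan is to reduce both parts to a statement about the iterates of the Coxeter transformation $\Phi$ and then to combine the spectral information of \cite{270} with the defect criterion of Theorem \ref{delapena}. Write $\rho:=\rho(\Phi)$. First I would record that if $X$ is postprojective then every $\tau^{-m}X$ is again postprojective, and if $X$ is regular then every $\tau^{-m}X$ is again regular; in either case $\tau^{-m}X$ is a nonzero indecomposable which is not injective, since all injectives lie in the preinjective component, and that component is disjoint from the postprojective and from the regular components. Applying $\underline{dim}\,\tau^{-1}Y=\Phi^{-1}\underline{dim}\,Y$ (the inverse form of $\underline{dim}\,\tau Y=\Phi\,\underline{dim}\,Y$, valid for every non-injective indecomposable $Y$) repeatedly gives $\underline{dim}\,\tau^{-m}X=\Phi^{-m}\,\underline{dim}\,X$, so part (i) becomes the claim that $\rho^{-m}\Phi^{-m}\,\underline{dim}\,X\to\lambda_{X}^{-}y^{-}$ with $\lambda_{X}^{-}>0$. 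Dually, for $X$ preinjective or regular every $\tau^{m}X$ stays preinjective, resp.\ regular, and so is never projective, whence $\underline{dim}\,\tau^{m}X=\Phi^{m}\,\underline{dim}\,X$ and part (ii) becomes $\rho^{-m}\Phi^{m}\,\underline{dim}\,X\to\lambda_{X}^{+}y^{+}$ with $\lambda_{X}^{+}>0$.

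For the convergence I would use the eigenvalue picture of \cite{270}. For (ii) it applies to $\Phi$ directly: $\rho$ is a simple eigenvalue with one-dimensional eigenspace $\mathbb{R}y^{+}$ and every other eigenvalue has modulus $<\rho$, so $\mathbb{R}^{n}=\mathbb{R}y^{+}\oplus W$ with $W$ a $\Phi$-invariant complement on which $\Phi$ has spectral radius $\rho'<\rho$; decomposing $\underline{dim}\,X=\lambda_{X}^{+}y^{+}+w$ with $w\in W$ one gets $\|\Phi^{m}w\|=O(m^{n}(\rho')^{m})$, hence $\rho^{-m}\Phi^{m}\,\underline{dim}\,X\to\lambda_{X}^{+}y^{+}$. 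For (i) the same argument is run for $\Phi^{-1}$, which is conjugate to $\Phi^{T}$ (indeed $C^{-1}\Phi^{T}C=\Phi^{-1}$), equivalently is the Coxeter transformation of the again wild opposite quiver $Q^{\mathrm{op}}$; hence $Spec(\Phi^{-1})=Spec(\Phi)$, so $\rho$ is also the spectral radius of $\Phi^{-1}$, is simple for it, and $\Phi y^{-}=\rho^{-1}y^{-}$ identifies its eigenline as $\mathbb{R}y^{-}$. The analogous decomposition then yields $\rho^{-m}\Phi^{-m}\,\underline{dim}\,X\to\lambda_{X}^{-}y^{-}$.

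It remains to establish the strict positivity of $\lambda_{X}^{\pm}$, and this is the only step I expect to require an idea rather than bookkeeping. One inequality is immediate: each $\rho^{-m}\Phi^{\mp m}\,\underline{dim}\,X=\rho^{-m}\,\underline{dim}\,\tau^{\mp m}X$ has nonnegative entries, so the limit $\lambda_{X}^{\mp}y^{\mp}$ does too, and as $y^{\mp}$ is strictly positive we get $\lambda_{X}^{\mp}\geq 0$. To exclude equality I would use that $\Phi$ preserves the Euler form, i.e.\ $\langle\Phi a,\Phi b\rangle=\langle a,b\rangle$, which drops out of the Serre-duality identity $\langle a,b\rangle=-\langle b,\Phi a\rangle$ by applying it twice. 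Together with $\Phi y^{+}=\rho y^{+}$ this gives $\langle\Phi^{-m}\,\underline{dim}\,X,\,y^{+}\rangle=\rho^{m}\langle\underline{dim}\,X,\,y^{+}\rangle$, and with $\Phi^{-1}y^{-}=\rho y^{-}$ it gives $\langle y^{-},\,\Phi^{m}\,\underline{dim}\,X\rangle=\rho^{m}\langle y^{-},\,\underline{dim}\,X\rangle$. Dividing by $\rho^{m}$ and letting $m\to\infty$ turns these into $\lambda_{X}^{-}\langle y^{-},y^{+}\rangle=\langle\underline{dim}\,X,\,y^{+}\rangle$ and $\lambda_{X}^{+}\langle y^{-},y^{+}\rangle=\langle y^{-},\,\underline{dim}\,X\rangle$. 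By Theorem \ref{delapena} a postprojective or regular module is not preinjective, so $\langle\underline{dim}\,X,\,y^{+}\rangle>0$; dually a preinjective or regular module is not postprojective, so $\langle y^{-},\,\underline{dim}\,X\rangle>0$. Hence the relevant right-hand side is strictly positive, so $\lambda_{X}^{\pm}\neq 0$, and with $\lambda_{X}^{\pm}\geq 0$ this forces $\lambda_{X}^{\pm}>0$ (and, as a by-product, $\langle y^{-},y^{+}\rangle>0$). The only point I would double-check carefully in the write-up is the appeal to the Perron-type eigendata for $\Phi^{-1}$ in the second step; the rest is routine.
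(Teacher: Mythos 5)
The paper offers no proof of this theorem: it is imported verbatim from de la Pe\~na--Takane and Zhang (\cite{280,290}), so there is nothing internal to compare your argument against. On its own terms your proof is correct, and it is in fact the standard spectral argument behind the cited result. You correctly read the statement as asserting convergence of $\rho(\Phi)^{-m}\,\underline{dim}\,\tau^{\mp m}X$ (the displayed $\frac{1}{\rho(\Phi)}$ in the theorem is a typo for $\frac{1}{\rho(\Phi)^{m}}$, as the paper's own use of the theorem in the proof of Proposition \ref{wild}(ii) confirms). The reduction to $\Phi^{\mp m}\underline{dim}\,X$ is legitimate because postprojective and regular modules are never injective and preinjective and regular modules are never projective for a representation-infinite connected quiver, so the $\tau$-orbits do not terminate; the Perron-type splitting $\mathbb{R}^{n}=\mathbb{R}y^{+}\oplus W$ uses exactly the multiplicity-one and strict-dominance statements quoted from \cite{270}; and your observation that $\Phi^{-1}=C^{-1}\Phi^{T}C$ is the right way to transport those eigendata to $\Phi^{-1}$ with eigenline $\mathbb{R}y^{-}$. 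The positivity step via $\langle\Phi a,\Phi b\rangle=\langle a,b\rangle$ and the pairing against the opposite eigenvector is clean and also yields $\langle y^{-},y^{+}\rangle>0$ as you note. The only caveat worth flagging is a global one: in the sources, the sign characterization of Theorem \ref{delapena} and the limit behaviour of Theorem \ref{delapena2} are established together, so if you wanted a self-contained proof you would need to check that the ``moreover'' clauses of Theorem \ref{delapena} (strict positivity of $\langle\underline{dim}\,X,y^{+}\rangle$ for non-preinjective $X$, and dually) are not themselves derived from the limit formula you are proving. Within the conventions of this paper, where Theorem \ref{delapena} is taken as given, your argument is complete.
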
 

\begin{cor}
\label{centralcharge}
Let $Q$ be a wild quiver and let $X\in mod-kQ$ be an indecomposable module. Given a central charge $Z:K(mod-kQ)\rightarrow\mathbb{C}$ then
\begin{enumerate}
\item[(i)] If $X$ is postprojective or regular, then $\arg\ Z(\underline{dim}\ \tau^{-m}X)\rightarrow \arg\ Z(y^{-})$ for $m\rightarrow\infty$.
\item[(ii)] If $X$ is preinjective or regular, then $\arg\ Z(\underline{dim}\ \tau^{m}X)\rightarrow \arg\ Z(y^{+})$ for $m\rightarrow\infty$.
\end{enumerate}
\end{cor}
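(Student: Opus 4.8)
The plan is to derive Corollary~\ref{centralcharge} essentially formally from Theorem~\ref{delapena2}, using only the linearity and continuity of $Z$ together with one non-degeneracy check. First I would record that $Z\colon K(mod-kQ)=\mathbb{Z}Q_0\to\mathbb{C}$, being a homomorphism out of a free abelian group, extends uniquely to an $\mathbb{R}$-linear, hence continuous, map $Z\colon\mathbb{R}^n=\mathbb{Z}Q_0\otimes\mathbb{R}\to\mathbb{C}$, and that $\arg Z(cv)=\arg Z(v)$ for every $c\in\mathbb{R}_{>0}$ and every $v$ with $Z(v)\neq 0$. I would also note that $\arg$, with values taken in $(0,\pi]$, is continuous on $\overline{\mathbb{H}}$: by (\ref{halfplane}) the points of $\overline{\mathbb{H}}$ near the negative real axis all have phase $\le 1$, so no discontinuity of $\arg$ occurs there.

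Next I would verify that $Z(y^{-})\neq 0$, and likewise $Z(y^{+})\neq 0$. Since the eigenvector $y^{-}$ is strictly positive, $y^{-}=\sum_{i\in Q_0}(y^{-})_i\,\underline{dim}\,S_i$ with all $(y^{-})_i>0$, so $Z(y^{-})=\sum_{i}(y^{-})_i\,Z(S_i)$ is a strictly positive combination of the vectors $Z(S_i)\in\overline{\mathbb{H}}$. Such a combination again lies in $\overline{\mathbb{H}}$: if some $Z(S_i)$ has strictly positive imaginary part then so does the sum, and otherwise every $Z(S_i)$, and hence the sum, is a strictly negative real; in either case the result is a nonzero element of $\overline{\mathbb{H}}$, and $0\notin\overline{\mathbb{H}}$ by (\ref{halfplane}).

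Then, for part (i), I would argue as follows. If $X$ is postprojective or regular, then $\tau^{-m}X$ is again a nonzero indecomposable module for every $m\ge 0$ (postprojective, resp.\ regular), so $\underline{dim}\,\tau^{-m}X$ is a nonzero effective class and $Z(\underline{dim}\,\tau^{-m}X)\in\overline{\mathbb{H}}$. By Theorem~\ref{delapena2}(i) the vectors $\rho(\Phi)^{-m}\,\underline{dim}\,\tau^{-m}X$ converge to $\lambda^{-}_{X}y^{-}$ with $\lambda^{-}_{X}>0$; applying the continuous map $Z$ yields $\rho(\Phi)^{-m}Z(\underline{dim}\,\tau^{-m}X)\to\lambda^{-}_{X}Z(y^{-})$, a nonzero element of $\overline{\mathbb{H}}$ by the previous step. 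Since $\arg$ is continuous on $\overline{\mathbb{H}}$ and invariant under multiplication by positive reals, I would conclude $\arg Z(\underline{dim}\,\tau^{-m}X)=\arg\!\big(\rho(\Phi)^{-m}Z(\underline{dim}\,\tau^{-m}X)\big)\to\arg\!\big(\lambda^{-}_{X}Z(y^{-})\big)=\arg Z(y^{-})$. Part (ii) is the same argument with $\tau^{-m}$ replaced by $\tau^{m}$, $y^{-}$ by $y^{+}$, and Theorem~\ref{delapena2}(i) by Theorem~\ref{delapena2}(ii).

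The deduction is short, and the only step that is not purely formal is the non-degeneracy $Z(y^{\pm})\neq 0$; this is where strict positivity of $y^{\pm}$ is genuinely used (a combination with some coordinate zero could a priori be cancelled to $0$ by the central charge), and it is also what guarantees that the limiting ray stays inside $\overline{\mathbb{H}}$, away from the branch cut of $\arg$, so that the passage to the limit of arguments is legitimate.
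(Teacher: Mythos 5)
Your argument is essentially the paper's own proof: extend $Z$ to an $\mathbb{R}$-linear map on $\mathbb{Z}Q_0\otimes\mathbb{R}$, apply it to the convergence statement of Theorem~\ref{delapena2}, and pass to arguments. You merely make explicit two points the paper leaves implicit --- that $Z(y^{\pm})$ is a nonzero element of $\overline{\mathbb{H}}$ because $y^{\pm}$ is strictly positive, and that $\arg$ (valued in $(0,\pi]$) is continuous on $\overline{\mathbb{H}}$ --- both of which are correct and worth recording.
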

\begin{proof}
Let $X$ be postprojective or regular. Note first that $Z(y^{-})$ lies in the upper halfplane $\overline{\mathbb{H}}$ since $y^{-}$ is a strictly positive vector. We consider the linear map of real vector spaces $Z:K(mod-kQ)\otimes_{\mathbb{Z}}\mathbb{R}\rightarrow\mathbb{C}$. The sequence of vectors $\frac{1}{\rho(\Phi)}Z(\underline{dim}\ \tau^{-m}X)$ converge to $\lambda^{-}_{X}Z(y^{-})$ for $m\rightarrow\infty$. Thus the phases of the complex numbers $\frac{1}{\rho(\Phi)}Z(\underline{dim}\ \tau^{-m}X)$ and $\lambda^{-}_{X}Z(y^{-})$ become arbitrary close. The other case follows similarly.
\end{proof}

We have the first observation for the phases of stable modules:

\begin{lem}
Let $Q$ be a wild quiver. We assume we have a central charge on $mod-kQ$. Then the phases of at most finitely many stable indecomposable modules of a regular component $\mathcal{C}$ lie right/left to a stable postprojective/preinjective indecomposable modules.           
\end{lem}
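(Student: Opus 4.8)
The plan is to reduce the assertion to one about dimension vectors — via the Euler form and the Hom-vanishing between the Auslander--Reiten components of the hereditary algebra $kQ$ — and then to constrain the possible dimension vectors using the asymptotics of the Coxeter orbits (Corollary \ref{centralcharge} and Theorem \ref{delapena2}). So first I would record the homological input: as $Q$ is acyclic, $kQ$ is hereditary, there are no nonzero homomorphisms from a regular module to a postprojective one (nor from a preinjective module to a regular one), and $\tau$ preserves the class of postprojective modules. Suppose $M$ is a stable indecomposable of a regular component $\mathcal{C}$ whose phase lies to the right of that of some stable postprojective indecomposable $P$, i.e. $\phi(M)<\phi(P)$. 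Then $Hom_{Q}(P,M)=0$ because $P$ and $M$ are semistable with $\phi(P)>\phi(M)$, and $Ext^{1}_{Q}(P,M)=0$ by Auslander--Reiten duality since $Hom_{Q}(M,\tau P)=0$ ($M$ regular, $\tau P$ postprojective). By formula (\ref{formula}) this gives the key relation
\begin{align}
\left\langle \underline{dim}\ P,\underline{dim}\ M\right\rangle = 0. \nonumber
\end{align}
Dually, if $M$ is stable in $\mathcal{C}$ with $\phi(M)$ to the left of that of a stable preinjective indecomposable $I$ (i.e. $\phi(M)>\phi(I)$), then $Hom_{Q}(M,I)=0$ and $Ext^{1}_{Q}(M,I)=0$, hence $\left\langle \underline{dim}\ M,\underline{dim}\ I\right\rangle = 0$.

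Next I would exploit this orthogonality. Using Theorem \ref{delapena} together with the identity $\left\langle x,y\right\rangle = -\left\langle y,\Phi x\right\rangle$ one checks that $\left\langle \underline{dim}\ P,y^{-}\right\rangle>0$ and $\left\langle \underline{dim}\ P,y^{+}\right\rangle>0$ for every postprojective indecomposable $P$: the latter is Theorem \ref{delapena}(ii) since $P$ is not preinjective, while $\left\langle \underline{dim}\ P,y^{-}\right\rangle = -\left\langle y^{-},\underline{dim}\ \tau P\right\rangle$ when $P$ is not projective, with $\tau P$ again postprojective, and for $P=P(i)$ projective one gets $\left\langle y^{-},\underline{dim}\ I(i)\right\rangle>0$. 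A regular component of a wild hereditary algebra has the shape $\mathbb{Z}A_{\infty}$, so it contains only finitely many indecomposables of any bounded dimension, and by Theorem \ref{delapena2} the dimension vector of an indecomposable running off to infinity inside $\mathcal{C}$ is, after normalization, arbitrarily close to a nonnegative combination of $y^{+}$ and $y^{-}$. Hence for a fixed $P$ the linear form $\left\langle \underline{dim}\ P,-\right\rangle$ is bounded below by a positive multiple of the dimension for all but finitely many objects of $\mathcal{C}$, so $\left\langle \underline{dim}\ P,\underline{dim}\ M\right\rangle = 0$ can hold for at most finitely many $M\in\mathcal{C}$.

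It remains to bound the postprojectives that can occur as witnesses. If the stable modules of $\mathcal{C}$ that we are counting have only finitely many distinct witnesses $P$, we are done by the previous paragraph. Otherwise there are infinitely many such modules $M_{n}$ with pairwise distinct witnesses $P_{n}$; by Corollary \ref{centralcharge} the phases of the postprojective modules accumulate only at $\arg Z(y^{-})$, and those of the regular modules of $\mathcal{C}$ accumulate only at $\arg Z(y^{-})$ and $\arg Z(y^{+})$ (together with the countably many intermediate limits forced by the $\mathbb{Z}A_{\infty}$-geometry, which themselves accumulate only at $\arg Z(y^{\pm})$). Passing to a subsequence, both $\underline{dim}\ M_{n}$ and $\underline{dim}\ P_{n}$ then become, after normalization, arbitrarily close to positive multiples of $y^{-}$, and feeding this into $\left\langle \underline{dim}\ P_{n},\underline{dim}\ M_{n}\right\rangle = 0$ must be made to contradict the positivity of $\left\langle \underline{dim}\ P,y^{-}\right\rangle$. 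The delicate point — and the only genuinely nontrivial part — is that $q(y^{-}) = \left\langle y^{-},y^{-}\right\rangle = 0$, because $y^{-}$ is a Coxeter eigenvector with eigenvalue $\rho(\Phi)^{-1}\neq\pm 1$ and $\Phi$ is an isometry of the symmetrized Euler form; so the leading term of the pairing degenerates and one must argue at the next order, tracking the signs of $\left\langle \underline{dim}\ P,y^{-}\right\rangle$, of $\left\langle y^{-},\underline{dim}\ M\right\rangle$, and of the subdominant Coxeter eigenvalues (the last of these also governing from which side the relevant phases approach $\arg Z(y^{-})$), and separately handling the case in which $Z(y^{+})$ and $Z(y^{-})$ are positively proportional, so that the two limit rays coincide. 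The statement for preinjectives follows by the dual argument, with $y^{+}$ in place of $y^{-}$.
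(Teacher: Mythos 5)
Your first paragraph is correct and is in substance the same first step as the paper's: if a stable $M\in\mathcal{C}$ lies to the right of a stable postprojective $P$ then $Hom_{Q}(P,M)=0$, and since $Ext^{1}_{Q}(P,M)$ is dual to $Hom_{Q}(M,\tau P)$, which vanishes because $M$ is regular and $\tau P$ postprojective, the whole content of the lemma is that $Hom_{Q}(P,-)$ can vanish on only finitely many indecomposables of $\mathcal{C}$. The paper simply quotes this fact (XVIII.2, Corollary 2.5 of \cite{300}) and stops. You instead try to reprove it from the Euler form and Coxeter asymptotics, and here there is a genuine gap: Theorem \ref{delapena2} controls only the $\tau$-orbit of a \emph{fixed} module ($\tau^{\pm m}X$, $m\to\infty$), whereas a regular component of type $\mathbb{Z}A_{\infty}$ consists of modules $\tau^{m}X[i]$ depending on \emph{two} parameters, and ``all but finitely many modules of $\mathcal{C}$'' is not contained in finitely many $\tau$-orbit tails. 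Your assertion that the normalized dimension vectors of all but finitely many modules of $\mathcal{C}$ approach the cone spanned by $y^{+}$ and $y^{-}$ is true, but it is not what the cited theorem says; the quasi-length direction $i\to\infty$ needs a separate argument. It can be supplied cheaply within your own framework: $\underline{dim}\ \tau^{m}X[i]=\sum_{j=0}^{i-1}\underline{dim}\ \tau^{m-j}X$, each summand pairs nonnegatively with $\underline{dim}\ P$ (the relevant $Ext^{1}$ vanishes as above), and by the orbit asymptotics only finitely many exponents $l$ give $\left\langle \underline{dim}\ P,\underline{dim}\ \tau^{l}X\right\rangle=0$, which forces $i$ and $m$ into a finite set. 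As written, though, the step is unjustified.

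Your third paragraph is an admitted non-proof: ``must be made to contradict'' and ``one must argue at the next order'' are placeholders, and you correctly diagnose that the leading-order pairing degenerates because $q(y^{-})=0$, so no contradiction comes out of the limit alone. Note, however, that the paper's proof fixes a single stable postprojective witness $P$ (the quoted corollary is for a fixed $P$), which is evidently the intended reading of the lemma; under that reading your third paragraph is unnecessary, and your first two paragraphs, once the asymptotic step is repaired, already give a complete (if much longer) argument. If one insists on the uniform statement over all stable postprojective witnesses simultaneously, neither your sketch nor the paper's proof establishes it.
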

\begin{proof}
By XVIII.2, 2.5 Corollary in \cite{300} for all but finitely many modules $X$ from $\mathcal{C}$ we have $Hom_{Q}(P,X)\neq 0$ for a postprojective module $P$ and $Hom_{Q}(X,I)\neq 0$ for a postprojective module $I$.    
\end{proof}

The following Proposition is the main result of this section:

\begin{prop}
\label{wild}
Let $Q$ be a wild quiver. Keep the notation from above. Let $Z:K(mod-kQ)\rightarrow\mathbb{C}$ be a rigid central charge on $mod-kQ$. Further we assume there is a stable module $V$ with $Ext_{Q}^{1}(V,V)\neq 0$. Then:
\begin{enumerate}
\item[(i)] There are infinitely many stable exceptional modules whose phases approach a limit point from the left respectively a limit point from the right in the upper halfplane $\overline{\mathbb{H}}$.
\item[(ii)] The infinitely many stable exceptional modules from (i) contain at most finitely many postprojective/preinjective indecomposable modules on the left/right.
\item[(iii)] If the infinitely many stable exceptional modules from (i) contain infinitely many postprojective and preinjective modules, the central charges of infinitely many stable postprojective/preinjective modules lie right/left from $Z(V)$ and approach $\arg\ Z(y^-)/\arg\ Z(y^+)$. Further, the central charges of the stable regular modules lie left to $Z(y^-)$ and right to $Z(y^+)$.
\item[(iv)] If the infinitely many stable exceptional modules from (i) do not approach $\arg\ Z(y^+)$ or $\arg\ Z(y^-)$ from left or right, then they contain infinitely many regular modules and, moreover, at most finitely many modules from each regular component.
\end{enumerate}     
\end{prop}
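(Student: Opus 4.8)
The plan is to reduce all four parts to the categorical mutation method of Section~3 together with the homological asymptotics of de la Pe\~na (Theorems~\ref{delapena}, \ref{delapena2} and Corollary~\ref{centralcharge}). For part~(i) I would argue exactly as in the proof of Proposition~\ref{qeuclidean}: $mod-kQ$ is an algebraic heart of $D^{b}(mod-kQ)$ that we can tilt indefinitely, and by Theorem~5.7 of \cite{70} every heart reached from it by a finite sequence of simple tilts is again algebraic and rigid, so Proposition~\ref{prop2} applies and gives infinitely many stable objects without self-extensions whose phases approach a limit point from the left and one from the right of $\arg Z(V)$. By the argument in the proof of Theorem~\ref{prop}, the left-most (resp.\ right-most) simple objects produced by the counter-clockwise (resp.\ clockwise) mutation method lie in $mod-kQ$; being simple in some $\mathcal{P}(\phi)$ they are indecomposable, hence bricks, and they have no self-extensions, so they are exceptional modules, which is~(i).

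For part~(ii) the first point is that $V$ is \emph{regular}: over the hereditary algebra $kQ$ postprojective and preinjective modules are directing, hence exceptional, so $Ext_{Q}^{1}(V,V)\neq 0$ excludes both, and Theorem~\ref{delapena}(iii) then gives $\langle y^{-},\underline{\dim}\,V\rangle>0$ and $\langle\underline{\dim}\,V,y^{+}\rangle>0$. Now suppose $\tau^{-m}P(i)$ is stable and lies to the left of $Z(V)$, i.e.\ $\phi(\tau^{-m}P(i))>\phi(V)$; then $Hom_{Q}(\tau^{-m}P(i),V)=0$ since both modules are semistable. On the other hand $\underline{\dim}\,\tau^{-m}P(i)=\Phi^{-m}\underline{\dim}\,P(i)$ points, after rescaling by $\rho(\Phi)^{-m}$, towards the positive multiple $\lambda_{P(i)}^{-}y^{-}$ of $y^{-}$ (Theorem~\ref{delapena2}(i)), so $\langle\underline{\dim}\,\tau^{-m}P(i),\underline{\dim}\,V\rangle\to+\infty$; by formula~(\ref{formula}) this forces $Hom_{Q}(\tau^{-m}P(i),V)\neq 0$ for all large $m$, a contradiction. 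Hence only finitely many stable postprojective indecomposables lie to the left of $Z(V)$, and the dual estimate using $\langle\underline{\dim}\,V,y^{+}\rangle>0$ handles preinjectives to the right of $Z(V)$.

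For part~(iii), assume the family from~(i) contains infinitely many postprojectives and infinitely many preinjectives. Since each orbit $(\tau^{-m}P(i))_{m}$ has phases tending to $\arg Z(y^{-})$ by Corollary~\ref{centralcharge}(i), the phases of postprojective indecomposables accumulate only at $\arg Z(y^{-})$; combined with~(ii) this places infinitely many stable postprojectives to the right of $Z(V)$ with phases tending to $\arg Z(y^{-})$, and dually infinitely many stable preinjectives to the left of $Z(V)$ with phases tending to $\arg Z(y^{+})$, whence $\arg Z(y^{-})\leq\arg Z(V)\leq\arg Z(y^{+})$. For the last assertion let $X$ be stable regular; for a stable postprojective $P=\tau^{-m}P(i)$ far enough out (which exists by the assumption) the estimate of~(ii) gives $\langle\underline{\dim}\,P,\underline{\dim}\,X\rangle\to+\infty$ because $\langle y^{-},\underline{\dim}\,X\rangle>0$ (Theorem~\ref{delapena}(iii)), so $Hom_{Q}(P,X)\neq 0$ and therefore $\phi(P)\leq\phi(X)$; letting $\phi(P)\to\arg Z(y^{-})$ yields $\phi(X)\geq\arg Z(y^{-})$, i.e.\ $Z(X)$ lies to the left of the ray $Z(y^{-})$, and a dual computation with a stable preinjective gives $\phi(X)\leq\arg Z(y^{+})$.

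For part~(iv), suppose neither limit point from~(i) equals $\arg Z(y^{-})$ or $\arg Z(y^{+})$. If the family from~(i) contained infinitely many postprojective (resp.\ preinjective) members, their phases would accumulate at $\arg Z(y^{-})$ (resp.\ $\arg Z(y^{+})$) by the previous paragraph, contradicting that they accumulate only at the two given limit points; hence only finitely many members are postprojective or preinjective and infinitely many are regular. For the component statement I would fix a regular component $\mathcal{C}$ and use \cite{300} (Corollary~2.5 of Section~XVIII.2, as in the Lemma preceding this Proposition): all but finitely many $Y\in\mathcal{C}$ receive a nonzero map from a fixed postprojective and map nontrivially to a fixed preinjective. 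Feeding the resulting phase inequalities between stable modules into the convergence $\arg Z(\underline{\dim}\,\tau^{\mp m}Y)\to\arg Z(y^{\mp})$ of Corollary~\ref{centralcharge} forces any infinite family of stable modules supported in $\mathcal{C}$ to have phases accumulating at $\arg Z(y^{-})$ or $\arg Z(y^{+})$, which is impossible under our hypothesis. The hard part is precisely this last step: \cite{300} only produces \emph{some} postprojective and preinjective module dominating almost all of $\mathcal{C}$, not stable ones, so converting it into a statement about phases of \emph{stable} modules is delicate — when~(iii) holds one replaces them by stable postprojective/preinjective modules far out in their $\tau$-orbits, while in the opposite case, where there are essentially no stable postprojectives or preinjectives, one must argue directly inside the $\mathbb{Z}A_{\infty}$-component $\mathcal{C}$, controlling the phases of its modules through the growth of their dimension vectors under iterated $\Phi$.
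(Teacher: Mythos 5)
Parts (i)--(iii) and the first half of (iv) of your proposal are essentially the paper's argument: (i) is Proposition \ref{prop2} applied via Theorem 5.7 of \cite{70}; your (ii) is the paper's Euler-form estimate $\rho(\Phi)^{-m}\left\langle \underline{dim}\ \tau^{-m}X,\underline{dim}\ V\right\rangle\rightarrow\lambda_{X}^{-}\left\langle y^{-},\underline{dim}\ V\right\rangle>0$ forcing $Hom_{Q}(\tau^{-m}X,V)\neq 0$, specialized to $Y=V$ (your preliminary observation that $V$ must be regular is a useful explicit step the paper leaves implicit); your (iii) is a correct fleshing-out of the paper's one-line appeal to Corollary \ref{centralcharge}; and your deduction that under the hypothesis of (iv) only finitely many of the modules from (i) can be postprojective or preinjective is exactly the paper's.

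The genuine gap is the last claim of (iv), that at most finitely many of these stable exceptionals lie in any single regular component $\mathcal{C}$ --- and you correctly flag it as the hard step, but the route you sketch (dominating almost all of $\mathcal{C}$ by a postprojective and a preinjective via Corollary 2.5 of XVIII.2 in \cite{300}, then trying to replace these by \emph{stable} ones) does not close it: in the situation of (iv) there need not be any stable postprojective or preinjective far out in a $\tau$-orbit, so the required phase inequalities are not available. The paper's argument avoids stability of postprojectives/preinjectives entirely. Write $\mathcal{C}=\left\{\tau^{m}X[i]\ |\ m\in\mathbb{Z},\ i\geq 1\right\}$ with $X$ quasi-simple; by Corollary 2.16 in chapter XVIII of \cite{300} an exceptional module $X[i]$ has $i\leq\#Q_{0}-2$, and exceptionality is preserved along $\tau$-orbits, so the exceptional modules of $\mathcal{C}$ are confined to finitely many $\tau$-orbits of \emph{regular} modules. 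An infinite set of stable exceptionals in $\mathcal{C}$ would therefore contain modules $\tau^{\pm m}Y$ with $Y$ regular and $m$ arbitrarily large, and Corollary \ref{centralcharge} applies to regular modules just as well as to postprojective/preinjective ones, so their phases would accumulate at $\arg Z(y^{\mp})$ --- contradicting the hypothesis of (iv). This is the missing idea: use the $\mathbb{Z}A_{\infty}$ shape of $\mathcal{C}$ plus the bound on exceptional quasi-lengths to reduce to finitely many $\tau$-orbits, then reuse the same accumulation argument you already applied to postprojectives and preinjectives.
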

\begin{proof}
(i) The first part follows from Proposition \ref{prop2}.\\

(ii) If we have infinitely many stable postprojective indecomposables, then we can find a postprojective module of the form $\tau^{-m}X$ with $\tau$ the Auslander-Reiten functor and $X$ postprojective for arbitrary large $m>0$. Let $Y$ be a stable regular indecomposable. For a postprojective indecomposable $X$ we have $$\lim_{m\rightarrow\infty}\frac{1}{\rho(\Phi)^m}\left\langle \underline{dim}\ \tau^{-m}X, \underline{dim}\ Y\right\rangle=\lambda_{X}^{-}\left\langle y^-,\underline{dim}\ Y \right\rangle>0$$ by Theorem \ref{delapena} and Theorem \ref{delapena2}. Thus $Hom_{Q}(\tau^{-m}X,Y)\neq 0$ for $m\gg 0$ and $\phi(\tau^{-m}X)<\phi(Y)$ for $m$ large enough. Thus the regular $V$ with $Ext_{Q}^{1}(V,V)\neq 0$ lies to the left of some of the postprojective indecomposables approaching a limit point from the left. But this is a contradiction since all stable modules with phases left to this limit point are exceptional. The statement for the stable preinjective indecomposables on the right is proven similarly.\\

(iii) This follows from Corollary \ref{centralcharge}.\\

(iv) We consider the infinitely many stable exceptional modules from (i). If we could find among these a stable module of the form $\tau^{m}X$ or $\tau^{-m}X$ with $X$ indecomposable and $m>0$ arbitrary large, then these stable modules would approach $\arg\ Z(y^+)$ or $\arg\ Z(y^-)$. Thus the stable exceptionales from (i) contain at most finitely many postprojective or preinjective modules. Each regular component $\mathcal{C}$ is of the type $\mathbb{Z}A_{\infty}$ and consists of the modules $\tau^{m}X[i]$ with $m\in\mathbb{Z}$, $i\geq 1$ and $X[i]$ is defined by the infinite chain of irreducible injective morphisms $$X=X[1]\rightarrow X[2]\rightarrow \cdots \rightarrow X[i]\rightarrow\cdots$$ where $X$ is a quasi-simple regular module in $\mathcal{C}$ (see chapter XVIII of \cite{300}). If the module $\tau^{m}X$ is exceptional then the modules in its $\tau$-orbit are exceptional. By Corollary 2.16 in chapter XVIII of \cite{300} if $X[i]$ is exceptional, then $i\leq \#Q_0-2$. Thus at most finitely many stable modules described in (i) lie in the regular component $\mathcal{C}$.   
\end{proof}

\section*{Acknowledgements} 
I thank Lutz Hille, Markus Reineke, Jan Schr\"oer and Michele del Zotto for useful discussions and/or correspondences. Further I am grateful to the Hausdorff Center for Mathematics of the University of Bonn for support.


\begin{thebibliography}{99}
\bibitem{18} M. Engenhorst: Tilting and refined Donaldson-Thomas invariants, J. Algebra 400 (2014), 299-314
\bibitem{28} M. Engenhorst: Bridgeland stability conditions in algebra, geometry and physics, PhD thesis, Freiburg 2014.
\bibitem{50} M. Alim, S. Cecotti et al.: N=2 Quantum field theories and their BPS quivers, Adv. Theor. Math. Phys., Volume 18, Number 1 (2014), 27-127
\bibitem{1} M. Douglas, G. Moore: D-branes, quivers and ALE instantons, arXiv: hep-th/9603167.
\bibitem{2} M. Douglas, B. Fiol, C. R\"omelsberger: Stability and BPS branes, JHEP 0509, 006 (2005)
\bibitem{3} D. Xie: BPS spectrum, wall crossing and quantum dilogarithm identity, arXiv preprint (1211.7071), 2012
\bibitem{90} V. Ginzburg: Calabi-Yau algebras, arXiv preprint (math/0612139), 2006
\bibitem{80} H. Derksen, J. Weyman, A. Zelevinsky: Quivers with potentials and their representations I: Mutations, Selecta Mathematica 14 (2008), 59-119
\bibitem{88} C. Geiß, D. Labardini-Fragoso, J. Schröer: The representation type of Jacobian algebras. arXiv preprint (1308.0478), 2013
\bibitem{140} B. Keller: On cluster theory and quantum dilogarithm identities, Representation Theory of Algebras and Related Topics (Tokyo) (A. Skowronski, K. Yamagata eds.), European Mathematical Society, 2011, 85-116.
\bibitem{20000} A. Beilinson, J. Bernstein, P. Deligne: Faisceaux Pervers, Ast\'erique 100 (1983)
\bibitem{5} T. Bridgeland: Stability conditions on triangulated categories, Ann. Math. 166 (2007), 317-346
\bibitem{20} D. Happel, I. Reiten, S.O. Smalo: Tilting in abelian categories and quasitilted algebras, Mem. Amer. Math. Soc. 120, no. 575 (1996)
\bibitem{190} K. Nagao: Donaldson-Thomas theory and cluster algebras, Duke Math. J., Volume 162, Number 7 (2013), 1313-1367
\bibitem{10} T. Bridgeland: Spaces of stability conditions, in: D. Abramovich et al. (eds.): Algebraic Geometry: Seattle 2005, Proc. of Symposia in Pure Mathematics, AMS, (2009), 1-22
\bibitem{200} N. Broomhead, D. Pauksztello, D. Ploog: Discrete derived categories I: homomorphisms, autoequivalences and t-structures, arXiv preprint (1312.5203) 2013
\bibitem{210} G. Dimitrov, F. Haiden, L. Katzarkov, M. Kontsevich: Dynamical systems and categories, in: L. Katzarkov, E. Lupercion, F. Turrubiates (eds.): The influence of Solomon Lefschetz in geometry and topology, Contemporary Mathematics, Vol. 621, AMS, (2014), 133-170
\bibitem{220} V. Dlab, C. Ringel: Indecomposable representations of graphs and algebras, Mem. Math. Soc. 173 (1976)
\bibitem{230} S. Dickson: A torsion theory for abelian categories, Trans. Amer. Math. Soc. 121 (1966), 223-235
\bibitem{70} A. King, Y. Qiu: Exchange Graphs of acyclic Calabi-Yau categories, arXiv preprint (arXiv:1109.2924), 2011
\bibitem{100} B. Keller: Cluster algebras and derived categories, arXiv preprint (1202.4161), 2012
\bibitem{250} C. Ringel: The braid group action on the set of exceptional sequences of a hereditary artin algebra, Contemporary Mathematics, Vol. 171, AMS, (1994), 339-352
\bibitem{240} William Crawley-Boevey: Lectures on representations of quivers, www1.maths.leeds.ac.uk/~pmtwc/quivlecs.pdf (1992) 
\bibitem{260} A. Rudakov: Stability for an Abelian category, J. Algebra 197 (1997), 231-245
\bibitem{270} C. Ringel: Tame algebras and integral quadratic forms, Springer 1984.
\bibitem{280} J.A. de la Pena, M. Takane: Spectral properties of Coxeter transformations and applications, Arch. Math. 55 (1990), 120-134
\bibitem{290} Y. Zhang: Eigenvalues of Coxeter transformations and the structure of the regular components of the Auslander-Reiten quiver, Comm. Algebra 17 (1989), 2347-2362
\bibitem{300} D. Simson, A. Skowronski: Elements of the representation theory of associative algebras 3, LMS 2007.
\bibitem{310} L. Hille, J. de la Pe\~na: Stable representations of quivers, J. Pure Appl. Algebra 172 (2002), 205–224


\end{thebibliography}
\end{document}